\definecolor{lccx}{HTML}{92268F}
\newtheorem{theorem}{Theorem}[section]
\newtheorem{proposition}[theorem]{Proposition}
\newtheorem{lemma}[theorem]{Lemma}
\newtheorem{definition}[theorem]{Definition}
\newtheorem{assumption}[theorem]{Assumption}
\theoremstyle{remark}
\newtheorem{remark}[theorem]{Remark}
\crefname{theorem}{Theorem}{Theorems}
\Crefname{theorem}{Theorem}{Theorems}
\crefname{assumption}{Assumption}{Assumptions}
\Crefname{assumption}{Assumption}{Assumptions}
\crefname{lemma}{Lemma}{Lemmas}
\Crefname{lemma}{Lemma}{Lemmas}
\crefname{definition}{Definition}{Definitions}
\Crefname{definition}{Definition}{Definitions}
\crefname{proposition}{Proposition}{Propositions}
\Crefname{proposition}{Proposition}{Propositions}
\crefname{algorithm}{Algorithm}{Algorithms}
\Crefname{algorithm}{Algorithm}{Algorithms}
\crefname{section}{Section}{Sections}
\Crefname{section}{Section}{Sections}
\crefname{appendix}{Appendix}{Appendices}
\Crefname{appendix}{Appendix}{Appendices}
\newcommand{\inner}[2]{\left(#1, #2\right)}
\DeclareMathOperator*{\argmax}{arg\,max}
\DeclareMathOperator*{\dist}{dist}
\DeclareMathOperator*{\TV}{TV}
\DeclareMathOperator*{\pred}{pred}
\DeclareMathOperator*{\ared}{ared}
\DeclareMathOperator*{\BV}{BV}
\DeclareMathOperator*{\BVW}{BV_W}
\DeclareMathOperator*{\TVB}{TV_B}
\DeclareMathOperator*{\supp}{supp}
\DeclareMathOperator*{\dvg}{div}
\DeclareMathOperator*{\interior}{int}
\newcommand{\N}{\mathbb{N}}
\newcommand{\R}{\mathbb{R}}
\newcommand{\Z}{\mathbb{Z}}
\newcommand{\Ha}{\mathcal{H}}
\newcommand{\conv}{\operatorname{conv}}
\newcommand{\norm}[1]{\left\| #1 \right\|}
\newcommand{\weakstarto}{\stackrel{\ast}{\rightharpoonup}}
\newcommand{\bdvg}[1]{\dvg{}_{{\hspace*{-2pt}#1\hspace*{2pt}}}}
\newcommand{\mres}{\llcorner}
\newcommand{\calD}{\mathcal{D}}
\newcommand{\calA}{\mathcal{A}}
\newcommand{\calW}{\mathcal{W}}
\newcommand{\symdif}{\vartriangle}
\newcommand*\dd{\mathop{}\!\mathrm{d}}
\crefname{assumption}{Assumption}{Assumptions}
\Crefname{assumption}{Assumption}{Assumptions}
\crefname{lemma}{Lemma}{Lemmas}
\Crefname{lemma}{Lemma}{Lemmas}
\crefname{definition}{Definition}{Definitions}
\Crefname{definition}{Definition}{Definitions}
\crefname{proposition}{Proposition}{Propositions}
\Crefname{proposition}{Proposition}{Propositions}
\crefname{algorithm}{Algorithm}{Algorithms}
\Crefname{algorithm}{Algorithm}{Algorithms}
\definecolor{darkgreen}{rgb}{0,0.5,0}
\definecolor{darkorange}{rgb}{0.9,0.5333333333333333,0.35}
\newcommand{\violet}[1]{{\color{violet} #1}}
\newcommand{\blue}[1]{{\color{blue} #1}}
\newcommand{\darkgreen}[1]{{\color{darkgreen} #1}}
\newcommand{\darkorange}[1]{{\color{darkorange} #1}}
\title{Domain Decomposition for Integer Optimal Control with Total Variation Regularization\footnote{
R. Baraldi acknowledges funding by the U.S. Air Force Office of Scientific Research no. \ FA9550-22-1-0248 and Sandia Laboratory Directed Research and Development John von Neumann Fellowship.
P. Manns acknowledges funding by Deutsche Forschungsgemeinschaft (DFG) under project no.\ 515118017.}}
\author[1]{Robert Baraldi}
\affil[1]{Sandia National Laboratories, Optimization and Uncertainty Quantification. P.O.~Box 5800, Albuquerque, New Mexico, 87125, USA, \texttt{rjbaral@sandia.gov}}
\author[2]{Paul Manns}
\affil[2]{Faculty of Mathematics, TU Dortmund University, 44227 Dortmund, Germany, \texttt{paul.manns@tu-dortmund.de}}
\begin{document}
\maketitle
\begin{abstract}
Total variation integer optimal control problems admit solutions and necessary optimality conditions 
via geometric variational analysis. In spite of the existence of said solutions, 
algorithms which solve the discretized objective suffer 
from high numerical cost associated with the combinatorial nature of integer programming. 
Hence, such methods are often limited to small- and medium-sized
problems.

We propose a globally convergent, coordinate descent-inspired algorithm that allows tractable subproblem solutions restricted
to a partition of the domain. Our decomposition method solves relatively small trust-region subproblems that modify the control
variable on a subdomain only.  Given nontrivial subdomain overlap, 
we prove that a global first-order necessary optimality condition is equivalent to a first-order
necessary optimality condition per subdomain. 
We additionally show that sufficient decrease is achieved on a single subdomain by way of a trust-region subproblem solver
using geometric measure theoretic arguments, which we integrate with a greedy patch selection to prove convergence of our algorithm.
We demonstrate the practicality of our algorithm on a benchmark large-scale, PDE-constrained integer optimal control problem, and find
that our method is faster than the state-of-the-art.
\end{abstract}

\noindent \textbf{Key words.} integer optimal control, total variation regularization, trust-region methods, domain decompositon.

\noindent \textbf{AMS subject classification.} 49K30, 49Q15, 49Q20, 49M37

\section{Introduction}
Let $\Omega \subset \R^d$ be a bounded Lipschitz domain, $d \in \N$, $\alpha > 0$, and $M \in \N$.
We consider integer optimal control problems \cite{hante2013relaxation} of the form
\begin{gather}\label{eq:p}
\begin{aligned}
\min_{w \in L^1(\Omega)}\ & J(w) \coloneqq F(w) + \alpha \TV(w) \\
\text{s.t.}\quad 
& w(x) \in W \coloneqq \{w_1,\ldots,w_M\} \subset \Z
\text{ for almost every (a.e.) }
x \in \Omega,
\end{aligned}\tag{P}
\end{gather}
where $F : L^1(\Omega) \to \R$ is lower semicontinuous with respect to convergence in $L^p(\Omega)$ for some $p \ge 1$ and
bounded below and $\TV : L^1(\Omega) \to [0,\infty]$ denotes the total variation seminorm. Typically, $F = j \circ S$ for
$j$ some tracking-type functional and $S$ a solution operator of a PDE, ODE, or another integral operator.
Such situations arise in control of switched systems \cite{de2019mixed,kaya2020optimal,sager2021mixed},
topology optimization \cite{clason2018total,clason2021convex,haslinger2015topology,leyffer2021convergence,sigmund2013topology},
or optimal experimental design \cite{sager2013sampling,yu2021multidimensional}. 

Specifically, we consider the setting present in \cite{manns2023on},
where the authors proposed to solve a sequence of subproblems
\begin{gather}\label{eq:tr}
\text{{\ref{eq:tr}}}(\bar{w}, g, \Delta) \coloneqq
\left\{
\begin{aligned}
\min_{w \in L^2(\Omega)}\ & (g, w - \bar{w})_{L^2(\Omega)} + \alpha \TV(w)-\alpha \TV(\bar{w})\\
\text{s.t.}\quad & \|w - \bar{w}\|_{L^1(\Omega)} \le \Delta\text{ and }w(x) \in W \text{ for a.e.\ } x \in \Omega,
\end{aligned}
\right.%\}
\tag{TR}
\end{gather}
within a trust-region algorithm for globalization. Therein, the function $g$ is (an approximation of) the Riesz representative
$\nabla F(\bar{w})\in L^{\infty}(\Omega)$ if $F$ is Fr\'{e}chet differentiable wrt.\ the $L^1$-norm so that the trust-region
subproblem \eqref{eq:tr} arises from \eqref{eq:p} by solving a partially linearized model in an $L^1(\Omega)$-ball around a given point $\bar{w}$,
that is, the current iterate of the algorithm. While we will assume said Fr\'{e}chet differentiability of $F$ to derive
our results, we note that the presented algorithm and its iterations may still be well defined and meaningful if, e.g., subgradients are used if $F$ is
not differentiable. After discretization of the domain $\Omega$ and the introduction of a piecewise constant ansatz for $w$, the trust-region 
subproblems become integer linear programs \cite{leyffer2022sequential};
these are often computationally expensive to solve in practice (see, e.g., 
\cite{leyffer2022sequential,manns2024discrete}).
This difficulty stems from the large number of variables present in the
integer linear programs, causing long running times or
even subproblem computational infeasibility when particularly 
fine discretizations are chosen. While dynamic programming-based
algorithms \cite{marko2023integer,severitt2023efficient}
allow for efficient subproblem solvers on one-dimensional
domains, no such approach is known for multi-dimensional domains.
Instead, recent results \cite{manns2024discrete}
indicate that the problems are likely NP-hard, thereby requiring
integer programming solver-based methods. 
These may moderately improve run times,
but such structure-exploitation techniques are
unlikely to decrease wallclock time by an appreciable amount. %several orders of magnitude.
Moreover, when considering a piecewise control function ansatz,
as in \cite{manns2023on,manns2024discrete},
the \emph{exact} total variation is the sum of interface lengths between
the level sets weighted by the jump heights across the sets; see, for example,
\cite{herrmann2019discrete}. Such settings are inevitable 
if the discretized controls are discrete-valued.
Using uniform meshes and driving the mesh size to zero
yields a gap between the discrete problem 
and the intended infinite-dimensional
limit problem, because the restricted geometry of the
discretization is reflected in the infinite-dimensional limit \cite{cristinelli2023conditional}. 
The difficulty is compounded in 
finite difference and finite element-based approximations, where the underlying meshes
for the control and finite-element ansatz coincide \cite{chambolle2021approximating}, 
cannot be applied directly because interpolation and
projection operators can violate integer feasibility.
The authors of \cite{schiemann2024discretization} overcome this problem
via a two-level discretization combined with a cutting plane 
generation strategy, the latter of which successively enriches
the integer subproblems, e.g. \eqref{eq:tr}, 
with linear inequalities. 
These allow isotropic approximations of the total variation seminorm despite the aforementioned geometric restriction. However, this process generates even larger problem formulations that are even less tractable; see the compute times reported in \S6.4 of \cite{schiemann2024integer}, in particular Table 6.11,
where these problem formulations are used within the algorithm 
from \cite{manns2023on}.
Consequently, reducing the size of subproblems is a 
sensible starting point to scale the algorithm from \cite{leyffer2022sequential,manns2023on} to practical
problem sizes, even given the potential cost incurred by computing more subproblem solutions.

Many large-scale problems, particularly in PDE
numerics, are solved using domain decomposition approaches; see the
monographs \cite{chan1994domain,dolean2015introduction,lagnese2012domain,mathew2008domain}. In finite-dimensional optimization, coordinate-descent algorithms \cite{tseng2009coordinate,wright2015coordinate} 
can be interpreted as decomposition-based methods and
are particularly popular for settings with
separable or block-separable objective functions.
Coordinate-descent algorithms update
only on a subset of the coordinates at a time, either cyclically
or via random selection; in extreme cases, 
this may be only one coordinate. 
Hence, cost per iteration is reduced to an acceptable
level although the number of iterations may be very large.
Such methods have been explored for convex optimization with $\TV$ terms, primarily in the context of image
denoising with an $L^2$-fidelity term.
Common splitting methods, such as the Chambolle--Pock algorithm \cite{pock2010tv}, perform well for small- and medium-scale problems and typically do not employ any domain decomposition. 
For larger $\TV$-regularized problems, domain decomposition, or similarly utilized coordinate-descent methods, have been developed for both primal and (pre-)dual formulations; see the overview articles \cite{lee2020recent,Langer2023}. While the setting therein concerns real-valued inputs and is in particular convex, these domain decomposition techniques also make concessions in of terms convergence guarantees or
quality of the result. Specifically, naive coordinate-descent methods generally require a separability condition on the convex, nonsmooth term \cite{tseng2009coordinate}
to converge to global minimizers. This is, however, violated by the $\TV$-term. Alternatively, one can consider the Euler--Lagrange equation for perturbed TV \cite{chen2007multi,Langer2023}, which, however, fails to preserve discontinuities and edges \cite{Langer2023}.
For pre-dual TV formulations \cite{kunisch2004bi}, convergence guarantees to a (global) minimizer have been proven for semismooth-Newton-type or accelerated splitting methods with overlapping and non-overlapping domain decomposition methods \cite{Langer2023,gaspoz2019predual}; additionally, patch subproblems can be solved in parallel \cite{langer2015non,lee2019dual,lee2019fe,lee2019non}.
Discretizations considered include finite-differences \cite{langer2015non} and finite elements \cite{lee2019fe}. The former
can be solved with a variety of algorithms, e.g., Chambolle--Pock \cite{pock2010tv} or semismooth Newton \cite{langer2015non}, yielding
convergence to a solution of the dual problem.
The finite-element splitting approach \cite{lee2019fe}
uses accelerated iterative soft-thresholding to achieve convergence
to a solution of the dual problem for non-overlapping domains, 
allowing for direct parallelization. For primal decomposition 
methods to achieve such a result, a communication mechanism is 
required between or after solving the decomposed problems;
see \cite{gaspoz2019predual}.

Our setting is inherently non-convex, hence we do not have strong duality and therefore follow a primal decomposition approach while striving for convergence to stationary points as in the case sans decomposition \cite{leyffer2022sequential,schiemann2024discretization}.

\subsection{Contribution}
We transfer the idea of coordinate descent to \eqref{eq:p} and the 
trust-region algorithm proposed in \cite{manns2023on}.  We decompose
the domain $\Omega$ into smaller patches and solve the instances
of \eqref{eq:tr} on these patches. By prescribing a covering property
and thus nontrivial overlap for the patches, we can 
localize the first-order necessary optimality condition on \eqref{eq:p} 
from \cite{manns2023on} to a first-order necessary optimality on all
patch problems.
This in turn permits construction of competitor sequences that allow us
to prove sufficient decrease properties and 
determine stopping criteria for patch problem solution tabulation; 
the latter of which is based on the predicted reductions
and trust-region radii. In turn, we are able to show convergence
for a superordinate trust-region algorithm that tabulates solutions to
patch subproblems and makes a greedy update of the iterate.
Hence, there is no theoretical gap when compared to
the asymptotics of the trust-region algorithm shown in \cite{manns2023on}.
We additionally note that this does not contradict the missing
convergence guarantees of primal decomposition methods for the convex 
problems mentioned in the introduction; our optimality condition
is localized on the interfaces of the level sets of $w$,
and thus weaker than stationarity in a real-valued
and differentiable setting.

We have executed the algorithm for a test problem on a one-dimensional domain and a test problem
on a two-dimensional domain. On the one-dimensional domain, where we can use an efficient subproblem
solver from \cite{severitt2023efficient}, the SLIP algorithm scales very well and the subproblem solves
are faster than the solution to state and adjoint equation.
Consequently, the coordinate-based approach does not give a performance
benefit in this case (except for extreme situations). This is in contrast to the two-dimensional problem,
where a sufficiently large number of patches does not impair the quality of the resulting objective function
values but gives high speedups for expensive instances. For the most expensive instance in our benchmark,
our new algorithm returns a point of very similar objective value with a speedup of approximately
$125$ times the approach described \cite{manns2023on}. Consequently, the proposed algorithm
is an important step towards solving large-scale problems, as it can be computationally much
cheaper without impairing the quality of the computed points.

While the proposed greedy update is clearly expensive, our current analysis deems it necessary to avoid situations
like the one demonstrated in the famous example in \cite{powell1973search}, in which a coordinate descent-algorithm circumscribes
the local optimum by traversing the surrounding level-sets.
In such cases, coordinate descent fails to converge 
in $\R^n$ when using a fixed coordinate-selection scheme. Since, to our knowledge no other algorithmic
approaches exist thus far and integer programming problems become otherwise quickly
completely intractable, we believe that a greedy approach is justified.
Additionally, we integrate a heuristic acceleration step into the algorithm that combines
block updates sequentially (largest predicted decrease to smallest) until an {\it a posteriori} decrease condition fails. We also believe that the algorithm has a high 
potential for further improvements in terms of scalability,
in particular by means of randomized patch selection. Such randomization
is usually key to obtain good convergence properties for coordinate-descent algorithms
without tabulation and greedy selection \cite{wright2015coordinate}.

\subsection{Structure of the remainder} We first introduce important notation
in \cref{sec:notation}, and then the domain decomposition
and localized/patch first-order necessary optimality (stationarity)
in \cref{sec:localization}. \Cref{sec:algorithm} introduces the
trust-region algorithm that includes the aforementioned tabulation
and greedy update selection. In \cref{sec:convergence}, we prove
that instationary points lead to finite tabulation and
acceptable patch updates and in turn convergence of the trust-region
algorithm by means of the aforementioned competitor constructions.
\Cref{sec:numerics} provides preliminary computational results.

\section{Notation}\label{sec:notation}
Let $[n] \coloneqq \{1,\ldots,n\}$ for $n \in \N$.
The symmetric difference of sets $A,B\subset \R^d$ is $A\symdif B$.
We denote the Lebesgue measure $\lambda$, and the Lebesgue space $L^p(\Omega)$ on $\Omega$
as $\norm{\cdot}_{L^p(\Omega)}$ by $\norm{\cdot}_{L^p}$ with inner product $\inner{\cdot}{\cdot}_{L^2}$.
The restriction of a measure $\mu$ to set $A$ is $\mu\mres A$.
For a set $A$, the function $\chi_A$ is the $\{0,1\}$-valued characteristic function of $A$.
We call a partition of a set into sets of finite perimeter a \emph{Caccioppoli partition}.
For a set $A \subset \Omega$, we denote its points of density $1$ and $0$ with respect to the
Lebesgue measure by $A^{(1)}$ and $A^{(0)}$, see also \cite[\S5.3]{maggi2012sets}.
For measureable $E\subset \Omega$, the perimeter is defined as in \cite{maggi2012sets, manns2023on}
$$
P(E,\Omega)\coloneqq \sup\left\{\int_E \dvg \varphi(x) \dd x \,\bigg\vert\, \varphi\in C_c^1(\Omega,\R^d), \, \sup_{x\in\Omega}\norm{\varphi(x)}\le 1\right\}.
$$
If $P(E,\Omega)< \infty$, it is a {\it Caccioppoli set}.
A partition $\{E_i\}_{i\in I}$ of $\Omega$ is a Caccioppoli partition
if $\sum_{i\in I}P(E_i,\Omega)< \infty$.
The {\it topological boundary} is defined as $\partial E$ and the {\it reduced
boundary} is $\partial^*E$.
The {\it essential boundary} $\partial^e A$ is the set of points with density of neither
1 nor 0 with respect to $A$.
Note that unless noted otherwise, we consider $\partial^*E$, $\partial^eE$, $E^{(0)}$, $E^{(1)}$,
$\partial E$ with respect to $\Omega$ so that
$P(E,\Omega)  = \Ha^{d-1}(\partial^*E)$,
where $\Ha^{d-1}$ denotes the $d-1$-dimensional Hausdorff measure.
For a given Caccioppoli set $E$, we denote its unit outer normal vector on the reduced boundary
by $n_E$. Moreover, if, in addition, a vector field $\phi \in C_c^\infty(\Omega, \R^d)$ is given, we recall that
its boundary divergence $\bdvg{E_i} \phi : \partial^*E \to \R$ is defined by
$\bdvg{E_i} \coloneqq \dvg \phi - n_E \cdot \nabla \phi n_E$ \cite[\S17.3]{maggi2012sets}.

A function $u\in L^1(\Omega)$ is of bounded variation ($u\in \BV(\Omega)$)
if its distributional derivative $Du$  is a finite Radon measure over $\Omega$, i.e.,
$\TV(u)\coloneqq \vert D u\vert(\Omega)< \infty$ where $\vert u\vert$
is the total variation of measure $\mu$.
For a Borel set $E\subset \Omega$, $\TV(\chi_E) = P(E,\Omega)$.
The fact that a sequence of functions $\{w^n\}_n\subset\BV(\Omega)$ converges
{\it weakly-$^*$} to $w \in \BV(\Omega)$ is denoted by $w^n \weakstarto w$
when $w^n\rightarrow w$ in $L^1(\Omega)$ and $\limsup_{n\rightarrow\infty}\TV(w^n)< \infty$.
A sequence $\{w^n\}_n$ converges {\it strictly} to $w\in \BV(w)$
if $w^n\rightarrow w$ in $L^1(\Omega)$ and $\TV(w^n)\rightarrow \TV(w)<\infty$.

Feasible points of \eqref{eq:p} are functions in $\BV(\Omega)$ that attain
values only in the finite set $W$; additionally, their
distributional derivatives are absolutely continuous with respect
to $\Ha^{d-1}$.
The feasible set of \eqref{eq:p} is $\BVW(\Omega)$ defined by
\[
\BVW(\Omega)\coloneqq \{w \in \BV(\Omega)\,\vert\, w(x) \in W \, \text{for a.e.\ } x\in\Omega\}\subset \BV(\Omega),
\]
which is weak-$^*$ sequentially closed in $\BV(\Omega)$ \cite{manns2023on}.
Note that due to the discreteness restriction, $\BVW(\Omega)$ is a bounded
subset of $L^\infty(\Omega)$. This also implies that the assumed lower semicontinuity of $F$
for some $p \ge 1$ gives lower semicontinuity of $F$ for all $p \in [1,\infty)$ when restricting to $\BVW(\Omega)$.
In order to avoid cumbersome notation, we define for open sets $B \subset \Omega$
the restricted total variation to $B$ as
\[ \TVB(w) \coloneqq
   \sum_{i=1}^{M-1}\sum_{j=i+1}^M |w_i - w_j|\Ha^{d-1} \mres B (\partial^* E_i \cap \partial^* E_j)
\]
for $w \in \BVW(\Omega)$ with corresponding Caccioppoli partition $\{E_1,\ldots,E_M\}$ of $\Omega$
such that $w = \sum_{i=1}^Mw_i \chi_{E_i}$. Clearly, $\TV(w) = \TV_{\Omega}(w)$.

\begin{definition}[Definition 3.1 in \cite{manns2023on}]
\begin{enumerate}
    \item A one parameter family of diffeomorphisms is the
    $f\in C^\infty:(-\epsilon, \epsilon)\times \Omega\rightarrow \Omega$
    for some $\epsilon>0$ such that for all $t\in(-\epsilon,\epsilon)$, the function $f_t(\cdot)\coloneqq f(t,\cdot):\Omega\rightarrow\Omega$
    is a diffeomorphism.
    \item For open $A\subset \Omega$, the family $(f_t)_{t\in(-\epsilon,\epsilon)}$
    is a {\it local variation} in $A$ if, in addition to 1., $f_0(x)=x$ for all
    $x\in \Omega$ and there is a compact set $K\subset A$ such that $\{x\in \R^d \vert f_t(x)\neq x\} \subset K$ for all $t\in (-\epsilon, \epsilon)$.
    \item For a local variation, its {\it initial velocity} is defined as
    $\phi(x)\coloneqq \tfrac{\partial f}{\partial t}(0,x)$ for $x\in \Omega$.
\end{enumerate}
\end{definition}

\section{Relation of Stationarity for \texorpdfstring{\eqref{eq:p}}{TEXT} to Patch Problems}\label{sec:localization}
We propose to solve subproblems on patches that only update the iterate in some part
of the domain $D \subset \Omega$. We briefly recall stationarity for \eqref{eq:p}
from \cite{manns2023on} and subsequently define a restricted \emph{patch-stationarity}
concept to patches $D$ that decompose the domain $\Omega$. Then we prove equivalence of
stationarity and patch-stationarity. This section references several results from
\cite{manns2023on}, which is written under the general assumption $d \ge 2$. 
Inspecting the arguments of \cite{manns2023on}, we observe that $d \ge 2$
is not required for the proofs of the referenced results (it is required in \cite{manns2023on}
for proving Theorem 5.2 therein and results building on it) so that the referenced results
can all safely be used here in our unified setting $d \ge 1$.

\subsubsection*{Stationarity for \eqref{eq:p}}
We provide the stationarity concept from Definition 4.4 in \cite{manns2023on}, which leans on
stationarity for the prescribed mean curvature problem below.
\begin{definition}[Stationarity, Definition 4.4 in \cite{manns2023on}]\label{dfn:stationarity}
Let $F : L^1(\Omega) \to \R$ be continuously
Fr\'{e}chet differentiable.
Let $w \in \BVW(\Omega)$, that is $w = \sum_{i=1}^M w_i \chi_{E_i}$
for some Caccioppoli partition $\{E_1,\ldots,E_M\}$  of
$\Omega$. Let $\nabla F(w) \in C(\bar{\Omega})$.
Then, $w$ is \emph{stationary} if
\begin{gather}\label{eq:stationarity}
\sum_{i=1}^{M - 1} \sum_{j=i + 1}^M
\int_{\partial^*{E}_i \cap \partial^* E_j}
(w_j - w_i)\nabla F(w)(x)\phi(x)\cdot n_{E_i}(x)
	- \alpha |w_i - w_j| \bdvg{E_i} \phi(x)
	\dd \Ha^{d-1}(x) = 0
\end{gather}
holds for all $\phi \in C^\infty_c(\Omega, \R^d)$.
\end{definition}
Stationarity as defined above is a first-order necessary optimality condition for \eqref{eq:p}
that arises from a first-order variation of the objective functional at a locally optimal point
with respect to perturbations of a feasible point's level sets;
see \cref{prp:first-order-opt} below, which is shown in Theorem 4.6 in \cite{manns2023on}.
It is an extension of the variational first-order optimality for the \emph{prescribed
mean curvature problem} from geometric measure theory; see, e.g., \cite[Equation 12.32 and Remark 17.11]{maggi2012sets}. Moreover,
\cite{leyffer2022sequential,manns2023on} prove
the limit points of a trust-region algorithm in $\BVW(\Omega)$ are stationary under 
regularity assumptions, the latter of which may be interpreted as \emph{constraint qualifications} in analogy
to classical nonlinear programming theory. Due to the non-convexity, \eqref{eq:stationarity} is a necessary and
not a sufficient optimality condition.

The variational formulation \eqref{eq:stationarity} can be interpreted as follows:
On $\partial^*E_i\cap \partial^* E_j$, the partition $E_i$
has distributional mean curvature $-\nabla F(w) \frac{w_i - w_j}{\alpha\vert w_i - w_j\vert}$.
We refer to \cite{manns2023homotopy,manns2023on} for more details and present the following proposition:
\begin{proposition}[Theorem 4.6 in \cite{manns2023on}, Proposition 2.4 in \cite{manns2023homotopy}]\label{prp:first-order-opt}
Let $F : L^1(\Omega) \to \R$ and $\bar{w} \in \BVW(\Omega)$
satisfy the assumptions of \cref{dfn:stationarity}.
If there is $r > 0$ such that
\[ F(\bar{w}) + \alpha \TV(\bar{w}) \le F(w) + \alpha \TV(w)
\]
holds for all $w \in \BVW(\Omega)$ such that
$\|w - \bar{w}\|_{L^1} \le r$, then $\bar{w}$ is stationary.
\end{proposition}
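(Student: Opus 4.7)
The plan is to derive \eqref{eq:stationarity} by varying $\bar w$ along smooth deformations generated by an arbitrary test field $\phi \in C_c^\infty(\Omega,\R^d)$ and forcing the resulting first variation of $J$ at $\bar w$ to vanish. Given $\phi$, I would construct a one-parameter family of diffeomorphisms $(f_t)_{t \in (-\epsilon,\epsilon)}$ with $f_0 = \mathrm{id}$, initial velocity $\phi$, and $\{x : f_t(x) \ne x\} \subset K$ for some compact $K \subset \Omega$ (for instance by integrating the ODE $\dot y = \phi(y)$). Pushing the level sets of $\bar w$ forward yields the Caccioppoli partition $E_i^t \coloneqq f_t(E_i)$ and the candidate $w_t \coloneqq \sum_{i=1}^M w_i \chi_{E_i^t} \in \BVW(\Omega)$. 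Because $\|f_t - \mathrm{id}\|_\infty = O(|t|)$, one obtains $\lambda(E_i \symdif E_i^t) = O(|t|)$ and hence $\|w_t - \bar w\|_{L^1} = O(|t|)$, so $w_t$ lies in the $L^1$-ball of radius $r$ around $\bar w$ for all sufficiently small $|t|$, whence $J(w_t) \ge J(\bar w)$ there by hypothesis.

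I would then differentiate $t \mapsto J(w_t)$ at $t = 0$. For the perimeter term, the classical first variation of perimeter formula from geometric measure theory (see, e.g., \cite[\S17]{maggi2012sets}) yields $\frac{d}{dt}\big|_{t=0} P(E_i^t,\Omega) = \int_{\partial^* E_i} \bdvg{E_i}\phi \, d\Ha^{d-1}$, with no contribution on $\partial\Omega$ since $\phi$ has compact support in $\Omega$. Splitting $\partial^* E_i$ over its pairwise intersections with the other $\partial^* E_j$ then gives $\alpha \frac{d}{dt}\big|_{t=0} \TV(w_t) = \alpha \sum_{i<j} |w_i - w_j| \int_{\partial^* E_i \cap \partial^* E_j} \bdvg{E_i}\phi \, d\Ha^{d-1}$. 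For the smooth part $F$, I would use the Fr\'echet expansion $F(w_t) - F(\bar w) = \inner{\nabla F(\bar w)}{w_t - \bar w}_{L^2} + o(\|w_t - \bar w\|_{L^1})$ (valid because $\nabla F(\bar w) \in C(\bar\Omega)$ and $\BVW(\Omega) \subset L^\infty(\Omega)$), which combined with the $O(|t|)$ bound reduces the problem to computing $\lim_{t\to 0} t^{-1} \int_\Omega \nabla F(\bar w)(w_t - \bar w)\,dx$. The change of variables $x = f_t(y)$ on each $E_i^t$ rewrites $\int_\Omega \nabla F(\bar w) w_t \, dx$ as $\sum_i w_i \int_{E_i} \nabla F(\bar w)(f_t(y))\, |\det Df_t(y)|\, dy$, whose $t$-derivative at zero is $\sum_i w_i \int_{E_i} \dvg(\nabla F(\bar w)\phi)\,dy$; the divergence theorem for sets of finite perimeter combined with $n_{E_j} = -n_{E_i}$ on $\partial^* E_i \cap \partial^* E_j$ collapses this to $-\sum_{i<j}(w_j - w_i) \int_{\partial^* E_i \cap \partial^* E_j} \nabla F(\bar w)\,\phi \cdot n_{E_i} \, d\Ha^{d-1}$.

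Since both $+t$ and $-t$ are admissible, $J(w_{\pm t}) \ge J(\bar w)$ forces $\frac{d}{dt}\big|_{t=0} J(w_t) = 0$; assembling the two contributions computed above, multiplying by $-1$, and using that $\phi$ was arbitrary recovers precisely \eqref{eq:stationarity}. The main technical obstacle is the derivative of $F$: the map $t \mapsto w_t$ is \emph{not} differentiable in $L^1$, since its distributional time derivative is concentrated on the moving interfaces $\partial^* E_i^t$. The workaround is to avoid differentiating $w_t$ itself and instead transfer the $t$-dependence into the integrand via the change of variables above, where ordinary calculus applies; the Fr\'echet remainder is then absorbed into $o(|t|)$ using the $O(|t|)$ bound on $\|w_t - \bar w\|_{L^1}$. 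A secondary point deserving care is verifying that $\{E_i^t\}$ is indeed a Caccioppoli partition of $\Omega$ with the expected reduced-boundary relation $\partial^* E_i^t = f_t(\partial^* E_i)$, which follows from standard pushforward properties of diffeomorphisms on sets of finite perimeter.
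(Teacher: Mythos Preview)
The paper does not give its own proof of this proposition; it cites Theorem~4.6 in \cite{manns2023on} and Proposition~2.4 in \cite{manns2023homotopy}. Your sketch is precisely the approach taken there: push forward the level sets of $\bar w$ along a local variation with initial velocity $\phi$, use the first variation of area formula from \cite[\S17]{maggi2012sets} for the $\TV$-term, and transport the $F$-term onto the interfaces via the divergence theorem for sets of finite perimeter; the two-sided minimality then forces the first variation of $J$ to vanish. In \cite{manns2023on} these two pieces are exactly Lemmas~3.3 and~3.5, which the present paper also invokes later (see \eqref{eq:pred_locvar}).

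One point in your write-up deserves more care. You compute the $t$-derivative of $\int_{E_i}\nabla F(\bar w)(f_t(y))\,|\det Df_t(y)|\,dy$ as $\int_{E_i}\dvg(\nabla F(\bar w)\phi)\,dy$ and then invoke the divergence theorem. Both steps silently use $\nabla F(\bar w)\in C^1$: differentiating $\nabla F(\bar w)\circ f_t$ under the integral requires a classical gradient of $\nabla F(\bar w)$, and the Gauss--Green formula for sets of finite perimeter needs a $C^1$ (or at least Lipschitz) vector field. \Cref{dfn:stationarity} only assumes $\nabla F(\bar w)\in C(\bar\Omega)$. The fix is routine---approximate $\nabla F(\bar w)$ uniformly by smooth functions and pass to the limit in the boundary integral, or bypass the bulk expression entirely and prove $\frac{d}{dt}\big|_{t=0}\int_{f_t(E_i)}g\,dx=\int_{\partial^*E_i}g\,\phi\cdot n_{E_i}\,d\Ha^{d-1}$ directly for continuous $g$---but as written the intermediate expression $\dvg(\nabla F(\bar w)\phi)$ is not a priori an $L^1$ function.
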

From the stationarity equality \eqref{eq:stationarity}
and its derivation by means of local variations,
it is clear that the above-defined stationarity concept
is based on purely local information that is concentrated
on the interfaces between the level sets of the
feasible point $w \in \BVW(\Omega)$ under consideration,
see the analysis of stationarity and corresponding remarks
in \cite{leyffer2022sequential,manns2023homotopy,manns2023on}.
In particular, the variational \emph{for all} character of the condition
in \cref{dfn:stationarity} implies that constant functions, which do not have level set boundaries inside $\Omega$,
are always stationary. We stress that this does not mean that the algorithm proposed in
\cite{manns2023on} and the algorithm analyzed in this work necessarily stop
at constant functions. On the contrary, we initialize our algorithm with a constant function in our experiments
in \cref{sec:numerics} and can observe that it produces a very different point.
However, by choosing a very large value of $\alpha > 0$, it is possible to construct
results where the algorithm cannot leave the stationary initial point; see also Example 3
in \cite{Langer2023}.

\subsubsection*{Patch-stationarity for \eqref{eq:p}}
We now assume a family of patches $\calD$ that cover our computational domain $\Omega$.
We want to relate the concept of stationarity from the patches to the whole domain
and vice versa. Therefore, we introduce and analyze a patch-based stationarity
concept below.

\begin{assumption}\label{ass:calD_open_cover}
Let $\calD \subset 2^{\Omega}$ be a finite, open cover of $\Omega$.
\end{assumption}

\begin{definition}[Patch-stationarity with respect to $\calD$]\label{dfn:patch_stationarity}
Let $\calD \subset 2^{\Omega}$, $F : L^1(\Omega) \to \R$, and $\bar{w} \in \BVW(\Omega)$
satisfy the assumptions of \cref{dfn:stationarity} and \cref{ass:calD_open_cover}.
Then, $\bar{w}$ is \emph{patch-stationary with respect to $\calD$} if for all $D \in \calD$
the identity \eqref{eq:stationarity}
holds for all $\phi \in C^\infty_c(D, \R^d)$.
\end{definition}

We are ready to prove our main result for the localization
of stationarity to the patch problems, namely that
stationarity and local stationarity
with respect to $\calD$ are equivalent.

\begin{theorem}\label{thm:stationarity_localization}
Let $\calD \subset 2^{\Omega}$, $F : L^1(\Omega) \to \R$, and $\bar{w} \in \BVW(\Omega)$
satisfy the assumptions of \cref{dfn:patch_stationarity}.
Then $\bar{w}\in\BVW(\Omega)$ is stationary
if and only if it is patch-stationary
with respect to $\calD$.
\end{theorem}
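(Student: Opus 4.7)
The plan is to prove both implications by exploiting the fact that the left-hand side of \eqref{eq:stationarity}, viewed as a map from the test vector field $\phi$ to $\R$, is linear in $\phi$ (since both $\phi \mapsto \nabla F(\bar{w})\phi\cdot n_{E_i}$ and $\phi \mapsto \bdvg{E_i}\phi = \dvg\phi - n_{E_i}\cdot\nabla\phi\, n_{E_i}$ are linear). Write $L(\phi)$ for this linear functional. Then stationarity is equivalent to $L(\phi)=0$ for all $\phi\in C_c^\infty(\Omega,\R^d)$, and patch-stationarity is equivalent to $L(\phi)=0$ for all $\phi\in C_c^\infty(D,\R^d)$ and all $D\in\calD$.

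For the implication ``stationary $\Rightarrow$ patch-stationary,'' I would simply observe that for each $D \in \calD$, $D$ is open with $D\subset \Omega$, and hence any $\phi\in C_c^\infty(D,\R^d)$ extends by zero to an element of $C_c^\infty(\Omega,\R^d)$. Applying \eqref{eq:stationarity} to this extension directly yields the patch-stationarity identity on $D$.

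For the converse ``patch-stationary $\Rightarrow$ stationary,'' I would use a smooth partition of unity subordinate to the finite open cover $\calD$ given by \cref{ass:calD_open_cover}. Given any $\phi\in C_c^\infty(\Omega,\R^d)$, the support $K \coloneqq \supp\phi$ is compact in $\Omega$, and a standard construction (see, e.g., any text on smooth manifolds or distributions) yields nonnegative functions $\{\psi_D\}_{D\in\calD}$ with $\psi_D\in C_c^\infty(D)$ and $\sum_{D\in\calD}\psi_D\equiv 1$ on a neighborhood of $K$. Then $\phi = \sum_{D\in\calD}\psi_D\phi$ pointwise on $\Omega$, and each $\phi_D\coloneqq\psi_D\phi$ lies in $C_c^\infty(D,\R^d)$. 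By linearity of $L$ and patch-stationarity,
\[
L(\phi) = \sum_{D\in\calD} L(\phi_D) = 0,
\]
so $\bar{w}$ is stationary. Since $\calD$ is finite, the sum is well defined with no convergence issues.

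I do not expect any substantive obstacle beyond invoking the partition-of-unity construction correctly; the only subtle point is verifying linearity of $L$ in $\phi$, which is immediate from the formulas above, together with ensuring that the partition of unity lives inside the open sets $D$ (not just with support in $\overline{D}$), so that $\phi_D$ qualifies as an admissible patch test field in \cref{dfn:patch_stationarity}. The finiteness of $\calD$ makes this automatic from the usual construction.
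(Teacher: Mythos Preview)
Your argument is correct and considerably more direct than the paper's. The forward implication matches the paper's: $C_c^\infty(D,\R^d) \subset C_c^\infty(\Omega,\R^d)$ by extension by zero.

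For the reverse implication, the paper argues contrapositively: assuming \eqref{eq:stationarity} fails for some $\phi$, it invokes the Vitali--Besicovitch covering theorem (\cref{lem:cover}) to produce a countable disjoint family of closed balls, each contained in some patch, exhausting $\supp\phi$ up to a set of $\Ha^{d-1}$-measure zero on the reduced boundaries; one such ball $\overline{B_k}$ must carry strictly positive contribution (\cref{lem:restriction_part_one}), and then $\phi\chi_{\overline{B_k}}$ is mollified to obtain a smooth test field compactly supported in the corresponding patch that still violates the identity. Your partition-of-unity argument replaces all of this: linearity of $L$ in $\phi$ together with $\phi = \sum_{D\in\calD}\psi_D\phi$, $\psi_D\phi\in C_c^\infty(D,\R^d)$, immediately gives $L(\phi)=0$. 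The linearity check is indeed immediate from the formulas, and for a finite cover of the compact set $\supp\phi$ the partition of unity with $\psi_D\in C_c^\infty(D)$ is standard, so there is no gap.

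What each approach buys: yours is shorter and self-contained, needing only elementary tools. The paper's route is more elaborate but develops covering and mollification machinery (the fine cover satisfying \eqref{eq:no_Hdm1_contribution_of_ball_boundaries}, the localization of a strict inequality to a single ball) that reappears in substantially similar form later in the convergence analysis, notably in the proof of \cref{lem:weakstar_accumulation_points_are_strict}. For \cref{thm:stationarity_localization} in isolation, however, your proof is the cleaner one.
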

While the proof of the forward implication is straightforward, the reverse implication requires
some preparatory work. Namely, for an arbitrary but fixed Caccioppoli partition of $\Omega$
and $\phi \in C_c^\infty(\Omega,\R^d)$, we assert the existence of a countable set of disjoint,
closed balls $\{ \overline{B_k}\,|\, k \in \N \}$ so that each $\overline{B_k}$ is contained
in at least one of the patches. Moreover, the $\overline{B_k}$ exhaust $\supp \phi$
except for a set of $\Ha^{d-1}$-measure zero and their boundaries intersect the interfaces of
the Caccioppoli partition only in a set of $\Ha^{d-1}$-measure zero.
This auxiliary result is proven below in \cref{lem:cover} as a direct consequence of the Vitali--Besicovitch
covering theorem.
\begin{lemma}\label{lem:cover}
Let $\calD$ satisfy \cref{ass:calD_open_cover}, $\{E_1,\ldots,E_M\}$ be a Caccioppoli partition
of $\Omega$, and $\phi \in C_c^\infty(\Omega,\R^d)$. Then there is a countable set
$\tilde{\mathcal{F}} = \{\overline{B_k}\,|\, k \in \N\}$
of pairwise disjoint and closed balls such that
\begin{align}
\overline{B_k} \subset \subset D \text{ for some } D \in \calD \text{ for all } k &\in \N,\\
\Ha^{d-1}\mres \left(\bigcup_{i=1}^M \partial^* E_i\right)
\left(\supp \phi \setminus \bigcup_{k=1}^\infty \overline{B_k} \right) &= 0, \text{ and}\\
\Ha^{d-1}\mres \left(\bigcup_{i=1}^M \partial^* E_i\right)
\left(\partial \overline{B_k}\right) &= 0.\label{eq:no_Hdm1_contribution_of_ball_boundaries}
\end{align}
\end{lemma}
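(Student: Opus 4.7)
The plan is to combine a Lebesgue number argument for the finite open cover $\calD$ with a standard application of the Vitali--Besicovitch covering theorem to the finite Radon measure $\mu \coloneqq \Ha^{d-1}\mres \bigcup_{i=1}^M \partial^* E_i$. Observe that $\mu$ is indeed a finite Radon measure on $\Omega$ because $\{E_1,\dots,E_M\}$ is a Caccioppoli partition, so $\mu(\Omega) \le \sum_{i=1}^M P(E_i,\Omega) < \infty$.

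First, I would exploit compactness of $\supp\phi$. Since $\calD$ is a finite open cover of $\Omega$, in particular of the compact set $\supp\phi$, the Lebesgue number lemma yields some $\delta > 0$ such that for every $x \in \supp\phi$ there exists $D_x \in \calD$ with $\overline{B_r(x)} \subset\subset D_x$ for every $r \in (0,\delta)$. This immediately handles property (1) for any ball we end up selecting.

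Next, I would show that for each $x \in \supp\phi$ the family
\[
\mathcal{F}_x \coloneqq \left\{\overline{B_r(x)} \,\middle|\, r \in (0,\delta),\, \mu(\partial \overline{B_r(x)}) = 0\right\}
\]
is a fine cover of $x$, i.e.\ contains closed balls of arbitrarily small radius. This follows from a classical slicing argument: the spheres $\partial B_r(x)$ for distinct radii $r$ are pairwise disjoint, and $\mu$ has finite total mass, so the set $\{r \in (0,\delta) \,|\, \mu(\partial B_r(x)) > 0\}$ is at most countable. Hence cofinitely many radii in any neighborhood of $0$ belong to $\mathcal{F}_x$.

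Finally, the combined family $\mathcal{F} \coloneqq \bigcup_{x \in \supp\phi} \mathcal{F}_x$ is a fine cover of $\supp\phi$ by closed balls, and $\mu$ is a finite Radon measure on $\R^d$. The Vitali--Besicovitch covering theorem (see, e.g., \cite[Theorem 5.1]{maggi2012sets}) then produces a countable pairwise disjoint subfamily $\tilde{\mathcal{F}} = \{\overline{B_k}\,|\,k\in\N\} \subset \mathcal{F}$ with
\[
\mu\left(\supp\phi \setminus \bigcup_{k=1}^\infty \overline{B_k}\right) = 0,
\]
which is exactly the second displayed conclusion, while \eqref{eq:no_Hdm1_contribution_of_ball_boundaries} follows from membership in $\mathcal{F}_{x_k}$. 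The main subtlety is ensuring that the measure $\mu$ is a valid Radon measure for applying Vitali--Besicovitch and, secondarily, justifying the ``fine cover'' property of $\mathcal{F}_x$; both are essentially standard once the Caccioppoli partition assumption is used to bound $\mu(\Omega)$.
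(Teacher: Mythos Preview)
Your proof is correct and follows the same overall strategy as the paper: build a fine cover of $\supp\phi$ by closed balls that are compactly contained in some patch and whose boundaries are $\mu$-null, then invoke the Vitali--Besicovitch covering theorem. The differences lie in the two auxiliary justifications. First, you use the Lebesgue number lemma on the compact set $\supp\phi$ to obtain a uniform radius bound $\delta$, whereas the paper simply picks, for each $x$, some $r_x > 0$ with $\overline{B_{r_x}(x)} \subset D$ directly from the open-cover assumption; uniformity is not needed for the fine-cover verification, so both work. Second, to show that the $\mu$-null boundary condition holds for a dense set of radii, you argue that at most countably many spheres $\partial B_r(x)$ can carry positive $\mu$-mass (disjointness plus $\mu(\Omega) < \infty$), while the paper argues by contradiction via Fubini--Tonelli that the bad radii form a Lebesgue-null subset of $(0,r_x)$. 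Your argument is slightly more elementary; the paper's yields the marginally stronger conclusion that almost every radius is good, which is not needed here. One small slip: the set of bad radii is at most \emph{countable}, not finite, so ``cofinitely many'' should read ``all but countably many''; the conclusion that $\mathcal{F}_x$ is fine is unaffected.
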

\begin{proof}
The claim follows from the Vitali--Besicovitch covering theorem, see
Theorem 2.19 in \cite{ambrosio2000functions}, if the (uncountable) set of
closed balls
\begin{gather}\label{eq:fine_cover_patch_stationarity}
\mathcal{F} = \left\{ \overline{B_s(x)} \,\middle|\, x \in \supp \phi,
0 < s,
\overline{B_s(x)} \subset D,
D \in \calD,
\text{ and }
\Ha^{d-1}\Bigg(\partial \overline{B_s(x)} \cap \bigcup_{i = 1}^M \partial^* E_i\Bigg) = 0
\right\}
\end{gather}
is a fine cover of $\supp \phi$. Specifically, we need to show (A) that
for all $x \in \supp \phi$ there is a ball $B_{r_x}(x)$, $r_x > 0$, which is (compactly)
contained in at least one patch $D \in \calD$ and (B) that each of these balls contains
infinitely many balls $\overline{B_{s_k}(x)}$ of radii $0 < s_k < r_x$
with $s_k \searrow 0$ that satisfy
$\Ha^{d-1}\Big(\partial \overline{B_{s_k}(x)} \cap \bigcup_{i = 1}^M \partial^* E_i\Big) = 0$.

Claim (A) follows directly from \cref{ass:calD_open_cover}.
Claim (B) follows from the fact that
for balls $B_s(x)$ we have $\Ha^{d-1}\big(\partial \overline{B_s(x)} \cap \bigcup_{i = 1}^M \partial^* E_i\big) = 0$
for $\lambda$-a.e.\ $s \in (0,r_x)$, which we briefly argue by way of contradiction.
Assume that this assertion is false, then there exists a subset $A \subset (0,r_x)$
with positive Lebesgue measure, where $\Ha^{d-1}(\partial \overline{B_s} \cap \bigcup_{i = 1}^M \partial^* E_i) > \varepsilon_0$ for some $\varepsilon_0 > 0$ and a.e.\ $s \in A$.
This implies $\Ha^{d-1}(\overline{B_{\bar{s}}} \cap \bigcup_{i = 1}^M \partial^* E_i) = \infty$
for some $\bar{s} \le r_x$ by virtue of the Fubini--Tonelli theorem,
which contradicts that the $E_i$ are sets of finite perimeter in $\Omega$.
\end{proof}
Next, we briefly argue that when \eqref{eq:stationarity} is violated for some
$\phi \in C_c^\infty(\Omega,\R^d)$, then there exists a closed ball in the countable
set $\tilde{\mathcal{F}}$ from \cref{lem:cover} so that the restriction
of the integrand in \eqref{eq:stationarity} to this closed ball also implies a violation.
\begin{lemma}\label{lem:restriction_part_one}
Let $F : L^1(\Omega) \to \R$ and $\bar{w} \in \BVW(\Omega)$
satisfy the assumptions of \cref{dfn:patch_stationarity}.
Let $\calD \subset 2^{\Omega}$ satisfy \cref{ass:calD_open_cover}.
Let $\{E_1,\ldots,E_M\}$ be the Caccioppoli partition of
$\Omega$ associated with $\bar{w}$.
Let \eqref{eq:stationarity} be violated, that is
\begin{gather}\label{eq:instationarity}
\sum_{i=1}^{M - 1} \sum_{j=i + 1}^M
\int_{\partial^*{E}_i \cap \partial^* E_j}
\underbrace{(w_j - w_i)\nabla F(w)(x)\phi(x)\cdot n_{{E}_i}(x)
	- \alpha |w_i - w_j| \bdvg{E_i} \phi(x)}_{\eqqcolon \psi_{ij}(x)}\dd \Ha^{d-1}(x)
> \eta
\end{gather}
holds for some $\phi \in C_c^\infty(\Omega,\R^d)$ and $\eta > 0$.
Let $\tilde{\mathcal{F}}$ be as in \cref{lem:cover}.

Then, there exist $\overline{B_k} \in \tilde{\mathcal{F}}$
with $\overline{B_k} \subset D$ for some $D \in \calD$
and a positive scalar $\eta_D > 0$ such that
\begin{gather}\label{eq:restricted_instationarity}
\sum_{i=1}^{M - 1} \sum_{j=i + 1}^M
\int_{\partial^*{E}_i \cap \partial^* E_j}
\psi_{ij}(x) \chi_{\overline{B_k}}(x) \dd \Ha^{d-1}(x)
> \eta_D.
\end{gather}
\end{lemma}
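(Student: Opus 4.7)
The plan is to express the left-hand side of \eqref{eq:instationarity} as a convergent series indexed over the balls in $\tilde{\mathcal{F}}$ and then to extract a single index whose summand is strictly positive by a pigeonhole argument applied to a finite partial sum.

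First, I would verify that for every pair $i<j$ the signed measure $\mu_{ij} \coloneqq \psi_{ij}\,\Ha^{d-1}\mres(\partial^* E_i \cap \partial^* E_j)$ has finite total variation. The integrand $\psi_{ij}$ is uniformly bounded, since $\nabla F(\bar{w}) \in C(\overline{\Omega})$ and $\phi$ together with its first derivatives belong to $C_c^\infty(\Omega,\R^d)$, the coefficients $|w_i-w_j|$ range over a finite set, and $|n_{E_i}| \le 1$. Combined with $\Ha^{d-1}(\partial^* E_i\cap \partial^* E_j) \le P(E_i,\Omega) < \infty$ from the Caccioppoli partition property, this makes $|\mu_{ij}|(\Omega)$ finite and justifies the countable additivity used below.

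Second, the properties of $\tilde{\mathcal{F}}$ from \cref{lem:cover}, namely pairwise disjointness and the vanishing of $\Ha^{d-1}\mres \bigcup_i\partial^* E_i$ on $\supp\phi \setminus \bigcup_k \overline{B_k}$, combined with the inclusion $\supp \psi_{ij} \subset \supp \phi$, yield
\begin{equation*}
\sum_{i=1}^{M-1}\sum_{j=i+1}^{M}\int_{\partial^* E_i\cap\partial^* E_j}\psi_{ij}(x)\,\dd\Ha^{d-1}(x) = \sum_{k=1}^{\infty} a_k,
\end{equation*}
where $a_k \coloneqq \sum_{i<j}\int_{\partial^* E_i\cap\partial^* E_j}\psi_{ij}(x)\,\chi_{\overline{B_k}}(x)\,\dd\Ha^{d-1}(x)$; the series converges absolutely by the first step. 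By hypothesis the left-hand side exceeds $\eta > 0$, so there exists $N \in \N$ with $\sum_{k=1}^N a_k > \eta/2$. Since $N$ is finite, a pigeonhole argument yields $k^* \in \{1,\ldots,N\}$ with $a_{k^*} > \eta/(2N) \eqqcolon \eta_D > 0$. By \cref{lem:cover}, $\overline{B_{k^*}} \subset\subset D$ for some $D \in \calD$, and this choice of ball together with $\eta_D$ satisfies \eqref{eq:restricted_instationarity}.

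The main obstacle is that the summands $a_k$ need not be nonnegative, so extracting a strictly positive term directly from $\sum_k a_k > \eta$ requires caution; passing through a finite partial sum circumvents this. Beyond that, the argument reduces to bookkeeping once \cref{lem:cover} is available, so the bulk of the geometric-measure-theoretic difficulty has already been absorbed into that lemma.
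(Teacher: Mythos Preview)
Your proof is correct and follows essentially the same approach as the paper: decompose the integral over the disjoint balls of $\tilde{\mathcal{F}}$ using countable additivity (with the properties from \cref{lem:cover}) and then extract a strictly positive summand. The paper is terser on the extraction step, simply noting that a strictly positive series must have a strictly positive summand, and then taking $\eta_D$ to be any positive number below that summand; your pigeonhole detour through a finite partial sum is correct but unnecessary, since once some $a_{k^*}>0$ exists you may set $\eta_D \coloneqq a_{k^*}/2$.
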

\begin{proof}
	We first note that if the equality \eqref{eq:stationarity} is violated,
	then there is $\eta > 0$ such that the absolute value of the left hand side of
	\eqref{eq:instationarity} is greater than $\eta$. Thus by replacing $\phi$ with $-\phi$ if
	necessary, \eqref{eq:instationarity} holds.
	By virtue of the properties asserted in \cref{lem:cover} and the
	countable additivity of the measure
	$\Ha^{d-1}\mres \left(\bigcup_{i=1}^M \partial^* E_i\right)$,
	we obtain
	\[ \sum_{k\in\N} \sum_{i=1}^{M - 1} \sum_{j=i+1}^M
	\int_{\partial^*{E}_i \cap \partial^* E_j}
	\psi_{ij}(x) \chi_{\overline{B_k}}(x) \dd \Ha^{d-1}(x)
	> \eta > 0
	\]
	from the instationarity inequality \eqref{eq:instationarity}.
	Consequently, there has to exist a strictly positive summand
	on the left hand side. We use (one of the) patches $D$ asserted by
	\eqref{eq:fine_cover_patch_stationarity}.
\end{proof}
We now continue with the proof of
\cref{thm:stationarity_localization}.
\begin{proof}[Proof of \cref{thm:stationarity_localization}]
If $\bar{w} \in \BVW(\Omega)$ is stationary and $D \in \calD$,
then \eqref{eq:stationarity} holds for all
$\phi \in C_c^\infty(D,\R^d)$ because \cref{ass:calD_open_cover}
ensures that $D$ is an open subset of $\Omega$.

Thus it remains to show that if $\bar{w} \in \BVW(\Omega)$ is patch-stationary with respect to $\calD$,
then it is also stationary. We prove the claim by means of a contrapositive argument and assume
that $\bar{w}$ is not stationary. Let $\{E_1,\ldots,E_M\}$ be the
Caccioppoli partition of $\Omega$ associated with $\bar{w}$.
Because $\bar{w}$ is not stationary, there are $\phi \in C^\infty_c(\Omega,\R^d)$, $0 \neq \phi$,
and $\eta > 0$ such that the inequality \eqref{eq:instationarity}
holds. Let $\tilde{\mathcal{F}}$ be as in \cref{lem:cover}.
By virtue of \cref{lem:restriction_part_one} there exist
$\overline{B_k} \in \tilde{\mathcal{F}}$ and $D \in \calD$
such that $\overline{B_k} \subset D$ for some $D \in \calD$
and a positive scalar $\eta_D > 0$ such that
\eqref{eq:restricted_instationarity} holds.
We close the proof by constructing a $C_c^\infty(D,\R^d)$-function
that violates \eqref{eq:stationarity}.

To this end, we use a mollification of the restriction of $\phi$ onto
$\overline{B_k}$, that is we replace $\phi \chi_{\overline{B_k}}$
by a smooth function in $C^\infty_c(D,\R^d)$ in the integrand of \eqref{eq:restricted_instationarity}.
To this end, let $\phi_\delta \coloneqq \eta_\delta * (\chi_{\overline{B_k}} \phi)$ for a family of positive
standard mollifiers $(\eta_\delta)_{\delta > 0}$. Then $\phi_\delta \in C_c^\infty(\Omega,\R^d)$.
We show that there exists $\delta_0 > 0$ such that for all
$\delta \in (0,\delta_0)$ it holds that $\supp \phi_\delta \subset D$ and
\begin{gather}\label{eq:patch_instationarity}
\lim_{\delta \searrow 0}\sum_{i=1}^{M - 1} \sum_{j=i+1}^M
\int_{\partial^*{E}_i \cap \partial^* E_j}
\psi_{ij}^\delta(x)\dd \Ha^{d-1}(x) > \eta_D,
\end{gather}
where $\psi^\delta_{ij} \coloneqq w_i (-\nabla F(w))(\phi_\delta \cdot n_{{E}_i})
- \alpha |w_i - w_j| \bdvg{E_i} \phi_\delta$ for all $i$, $j \in \{1,\ldots, M\}$.

For all $y \in \interior \overline{B_k}$, we obtain
$\dvg \phi_\delta(y) \to \dvg \phi(y)$ and $\nabla \phi_\delta (y) \to \nabla \phi(y)$
as $\delta \searrow 0$. Moreover, $\overline{B_k} \in \tilde{\mathcal{F}}$ gives
\eqref{eq:no_Hdm1_contribution_of_ball_boundaries}
that $\Ha^{d-1}\big(\partial \overline{B_k} \cap \bigcup_{i = 1}^M \partial^* E_i\big) = 0$. In combination with the properties of
the mollification, this implies
\[ \dvg \phi_\delta(y) \to \dvg \phi(y)
\enskip\text{and}\enskip
\nabla \phi_\delta (y) \to \nabla \phi(y)
\quad\text{as }\delta \searrow 0
\quad\text{for $\Ha^{d-1}$-a.e.\ } y \in \bigcup_{i = 1}^M \partial^* E_i \cap \overline{B_k}
\]
and
\[ \dvg \phi_\delta(y) \to 0
\enskip\text{and}\enskip
\nabla \phi_\delta (y) \to 0
\quad\text{as }\delta \searrow 0
\quad\text{for all } y \in \Omega \setminus \overline{B_k}.
\]
Because $D$ is open, it holds that $\dist(\overline{B_k},\partial D) > 0$ and
we obtain that there exists $\delta_0 > 0$ such that for all $\delta \in (0, \delta_0)$
the inclusion $\supp \phi_\delta \subset \subset D$ is satisfied.
These considerations imply
\[ \psi_{ij}^\delta(y) \to \psi_{ij}(y)\chi_{\overline{B_k}}(y)
   \quad\text{as }\delta \searrow 0\quad
   \text{for $\Ha^{d-1}$-a.e.\ }y \in \overline{B_k} \cap \bigcup_{i = 1}^M \partial^* E_i.
\]

Because of the $L^\infty$-bounds on $\phi$ and $\chi_{\overline{B_k}}$, we
can apply Lebesgue's dominated convergence theorem and obtain
\begin{multline*}
\sum_{i=1}^{M - 1} \sum_{j=i+1}^M
\int_{\partial^*{E}_i \cap \partial^* E_j}
\psi_{ij}^\delta(x)\dd \Ha^{d-1}(x)
\underset{\delta \searrow 0}\to\\
\sum_{i=1}^{M - 1} \sum_{j=i+1}^M
\int_{\partial^*{E}_i \cap \partial^* E_j}
\psi_{ij}(x) \chi_{\overline{B_k}}(x)\dd \Ha^{d-1}(x)
> \eta_D > 0,
\end{multline*}
implying that the left hand side is eventually strictly greater than zero
so that $\phi_\delta$ eventually violates \eqref{eq:stationarity}
and satisfies $\supp \phi_\delta \subset D$.
\end{proof}

\section{Trust-region Patch Algorithm}\label{sec:algorithm}
We now introduce \cref{alg:slipsub_greedy}, which is a modification of Algorithm 5.1 in
\cite{manns2023on}, where we optimize subproblems over each patch in the
collection $\calD$ of patches determined {\it a priori}. Before stating it, we
require a regularity assumption on the main part of the objective $F$.
\begin{assumption}[see also Assumption 2.2 in \cite{manns2023homotopy}, Assumption 4.3 in \cite{manns2023on}]\label{ass:F_regularity}~
\begin{enumerate}
\item\label{itm:F_Frechet} Let $F : L^1(\Omega) \to \R$ be continuously Fr\'{e}chet differentiable.
\item\label{itm:nablaF_Lipschitz} Let $\nabla F : L^1(\Omega) \to L^\infty(\Omega)$ be Lipschitz continuous on the feasible set, 
that is
\[ \infty > L_{\nabla F} 
\coloneqq \sup\left\{ 
\frac{\|\nabla F(w_1) - \nabla F(w_2)\|_{L^\infty}}{\|w_1 - w_2\|_{L^1}}
\,\middle|\,
w_1(x), w_2(x) \in \conv W \text{ for a.e.\ } x \in \Omega\right\}.
\]
\end{enumerate}
\end{assumption}
We note that \cref{ass:F_regularity} \ref{itm:F_Frechet} already implies the boundedness
\begin{gather}\label{eq:cofb}
c(b) \coloneqq \sup\left\{ \|\nabla F(w)\|_{L^\infty} \,\middle|\, w(x) \in W \text{ a.e.\ and } \TV(w) < b \right\} < \infty.
\end{gather}
for every $b > 0$ because we can identify $L^1(\Omega)^*$ with $L^\infty(\Omega)$ and the bound $\TV(w) < b$ implies that
the supremum is taken over a relatively compact subset of $L^1(\Omega)$
due to the compact embedding $\BV(\Omega) \hookrightarrow L^1(\Omega)$ \cite[Corollary 3.49]{ambrosio2000functions}.

\algblockdefx[ParFor]{ParFor}{EndParFor}%
[1]{{\textbf{(parallel) for}}\ #1\ {\textbf{do}}}%
{{\textbf{end for}}}
\algblockdefx[VarFor]{VarFor}{EndVarFor}%
[2]{{\textbf{for}}\ #1\ {\textbf{do}}\ #2}%
{{\textbf{end for}}}
\begin{algorithm}[t]
	\caption{Sequential linear integer programming method
		with greedy patch updates}\label{alg:slipsub_greedy}

	\textbf{Input:} $F$ satisfying \cref{ass:F_regularity} with smoothness constant $L_{\nabla F}$,
	$\Delta^0 > 0$, $w^0 \in \BVW(\Omega)$, $\sigma \in (0,1)$, set of patches $\calD$.
	\begin{algorithmic}[1]
		\For{$n = 0,1,2,\ldots$}
		\State Set $\calA \gets \emptyset$, $\calW \gets \{ (0,D) \,|\, D \in \calD \}$.
		\For{$k = 0,1,\ldots$}\label{ln:tabulation_loop}
		\While{$(k,D) \in \calW$}\label{ln:inner_tabulation_loop}
		\State $\tilde{w}^{n,k,D} \gets$
		minimizer of $\text{{\ref{eq:trp}}}(w^{n}, \nabla F(w^{n}), D, \Delta^{0} 2^{-k})$. \label{ln:parallel_trstep}
		\State $\pred^{n,k,D} \gets
		(\nabla F(w^{n}), w^{n} - \tilde{w}^{n,k,D})_{L^2}
		+ \alpha \TV(w^{n}) - \alpha \TV(\tilde{w}^{k,n,D})$\label{ln:parallel_pred}
		\State $\ared^{n,k,D} \gets F(w^{n}) + \alpha \TV(w^{n})
		- F(\tilde{w}^{n,k,D}) - \alpha\TV(\tilde{w}^{n,k,D})$
		\If{$\ared^{n,k,D} \ge \sigma \pred^{n,k,D}$ \textbf{and} $\pred^{n,k,D} > 0$}\label{ln:suff_dec}
		\State $\calA \gets \calA\cup \{ (k,D) \}$.
		\ElsIf{$\pred^{n,k,D} > 0$ \textbf{and} $\max_{(\tilde{k},\tilde{D}) \in \calA} \ared^{n,\tilde{k},\tilde{D}}
			< \pred^{n,k,D} + L_{\nabla F} \Delta^{0}
			2^{-k}$}\label{ln:not_pred_zero_or_ared_domination}
		\State $\calW \gets \calW \cup \{(k + 1,D)\}$\label{ln:increase_k}
		\EndIf
		\State $\calW \gets \calW \setminus \{(k,D)\}$.
		\EndWhile
		\If{$\calW= \emptyset$}
		\State \textbf{break}
		\EndIf
		\EndFor
		\If{$\calA = \emptyset$}
		\State \textbf{return} ($w^n$ is stationary).
		\EndIf
		\State $\bar{w}^n \gets w^n$.
		\State $\bar{j}^0 \gets F(w^n) + \alpha \TV(w^n)$.
		\While{$\calA \neq \emptyset$}
		\State $\bar{k},\bar{D} \gets \argmax\{\ared^{n,k,D}\,|\,(k,D) \in \calA \}$\label{ln:greedy}
		\State $\tilde{w}^n \gets \bar{w}^n\chi_{\Omega\setminus \bar{D}}
		        + \chi_{\bar{D}}\tilde{w}^{n,\bar{k}, \bar{D}}$
		\State $\bar{j} \gets F(\tilde{w}^n) + \alpha \TV(\tilde{w}^n )$
	    \If{$\bar{j} < \bar{j}^0$}\label{ln:apply_reduction}
	    \State $\bar{w}^n \gets \tilde{w}^n$, $\bar{j}^0 \gets \bar{j}$,
	    $\calA \gets \calA\setminus \{ (\bar{k},\bar{D}) \}$.
		\Else
		\State \textbf{break}
		\EndIf
		\EndWhile
		\State $w^{n + 1} \gets \bar{w}^n$
		\EndFor
	\end{algorithmic}
\end{algorithm}
Specifically, for a given patch $D$, an iterate $\bar{w}$, a gradient (approximation) $g$ of $\nabla F(\bar{w})$,
and a trust-region radius $\Delta > 0$, the subproblem reads:
\begin{gather}\label{eq:trp}
\text{{\ref{eq:trp}}}(\bar{w}, g, D, \Delta) \coloneqq
\left\{
\begin{aligned}
\min_{w \in L^2(\Omega)}\ & (g, w - \bar{w})_{L^2(\Omega)} + \alpha \TV(w)-\alpha \TV(\bar{w})\\
\text{s.t.}\quad & \|w - \bar{w}\|_{L^1(\Omega)} \le \Delta
\text{ and }w(x) \in W \text{ for a.e.\ } x \in D,\\
& w(x) = \bar{w}(x) \text{ for a.e.\ } x \in \Omega\setminus D.
\end{aligned}
\right.%\}
\tag{TRP}
\end{gather}
We briefly recall that $\text{{\ref{eq:trp}}}(\bar{w}, g, D, \Delta)$ is well-defined, that is, it admits a solution.
\begin{proposition}\label{prp:trp_existence}
Let $D \in \calD$, $\bar{w} \in \BVW(\Omega)$, $g \in L^2(\Omega)$, and $\Delta > 0$.
Then $\emph{\ref{eq:trp}}(\bar{w}, g, D, \Delta)$ has a minimizer.
\end{proposition}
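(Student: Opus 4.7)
The plan is to apply the direct method of the calculus of variations.

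First I would observe that the feasible set is nonempty, since $\bar{w}$ itself satisfies all three constraints trivially. Next, I would establish that the objective is bounded below on the feasible set. The term $\alpha\TV(w)$ is nonnegative, $-\alpha\TV(\bar{w})$ is a fixed constant, and for the linear part I would use that any feasible $w$ takes values in the finite set $W$ on $D$ and equals $\bar{w}$ off $D$, so $\|w-\bar{w}\|_{L^\infty} \le 2\max_i|w_i|$; combining with $\|w-\bar{w}\|_{L^1}\le\Delta$ gives an $L^2$-bound via interpolation, hence a Cauchy--Schwarz bound on $(g,w-\bar{w})_{L^2}$.

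Then I would pick a minimizing sequence $\{w^n\}$. Since the objective values are bounded above (by $0$, attained at $\bar{w}$) and the linear part is uniformly bounded as above, $\alpha\TV(w^n)$ is uniformly bounded. Combined with the uniform $L^\infty$-bound inherited from $\BVW(\Omega)$ and the boundedness of $\Omega$, the sequence is bounded in $\BV(\Omega)$. The compact embedding $\BV(\Omega)\hookrightarrow L^1(\Omega)$ then yields a subsequence (not relabeled) converging in $L^1(\Omega)$ and pointwise a.e.\ to some $w^\star$, with $\TV(w^\star)\le\liminf_n\TV(w^n)$ by weak-$^*$ lower semicontinuity of $\TV$.

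To close the argument I would verify that $w^\star$ is feasible and that the objective is lower semicontinuous along the sequence. The pointwise a.e.\ convergence gives $w^\star(x)\in W$ for a.e.\ $x\in D$ and $w^\star(x)=\bar{w}(x)$ for a.e.\ $x\in\Omega\setminus D$, while $L^1$-convergence preserves $\|w^\star-\bar{w}\|_{L^1}\le\Delta$. For the linear functional, the uniform $L^\infty$-bound on $w^n-\bar{w}$ combined with $L^1$-convergence gives $L^2$-convergence, hence $(g,w^n-\bar{w})_{L^2}\to(g,w^\star-\bar{w})_{L^2}$. Adding the liminf-inequality for $\TV$ shows that $w^\star$ attains the infimum. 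The only mildly subtle step is the uniform $L^\infty$-bound translating $L^1$-convergence into $L^2$-convergence so that the linear term passes to the limit under the mere $L^2$-regularity of $g$; everything else is standard direct-method bookkeeping.
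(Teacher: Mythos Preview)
Your proposal is correct and follows the standard direct-method argument that underlies the cited result; the paper itself does not spell out a proof but merely refers to \cite[Proposition~3.2]{leyffer2022sequential}, whose proof proceeds along exactly the lines you describe.
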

\begin{proof}
A proof can be found in \cite[Proposition 3.2]{leyffer2022sequential}.
\end{proof}

The regularity condition \Cref{ass:F_regularity} as well as \Cref{prp:trp_existence} allow for
\Cref{alg:slipsub_greedy} to converge to a stationarity point defined in \eqref{eq:stationarity}.
The method itself consists of an outer loop indexed by $n$,
and an inner/block-update loop indexed by $k$.

At the beginning of each outer loop, we initialize two sets:
the set of acceptable step candidates, $\calA = \emptyset$; and
the working set of patches, $\calW = (0, D)$ for $D \in \calD$.
The inner/block-update loop starting at Line \ref{ln:tabulation_loop} solves a
subproblem \eqref{eq:trp} every iteration $k$ over all patches in the set $\calD$ for which
the pair $(k,D)$ is contained in $\calW$. These subproblem solves yield trial iterates $\tilde{w}^{n,k,D}$, with
corresponding predicted, $\pred^{n,k,D}$, and actual, $\ared^{n,k,D}$, reductions.
Based on their values, $\calW$ is updated by means of the following case distinction.
If $\pred^{n,k,D}$ is zero, this means that $w^n$ is stationary in the patch $D$
and  $(k,D)$ is just removed from $\calW$. If this is not the case and the block update
$\tilde{w}^{n,k,D}$ moreover sufficiently decreases the cost function by some fraction of
the predicted reduction greater than zero, it is added to $\calA$ as an ``acceptable step''
and $(k,D)$ is also removed from $\calW$. If $\pred^{n,k,D}$ is greater than zero but
the sufficient decrease is not satisfied, it is checked if
$\max_{(\tilde{k},\tilde{D}) \in \calA} \ared^{n,\tilde{k},\tilde{D}}$,
the maximum of already found acceptable reductions, is dominated
from above by $\pred^{n,k,D} + L_{\nabla F} \Delta^{0}$,
the predicted reduction plus the Lipschitz constant of the first part of the objective
times the trust-region radius. We highlight that we use the convention
$\max \emptyset = -\infty$ here.
If this condition holds, a further reduction of the trust-region
radius may give an acceptable step that then maximizes the predicted reductions over all
steps that have been deemed acceptable so far. Thus $(k,D)$ is replaced by $(k+1,D)$ in $\calW$
so that the same patch but with reduced trust-region radius is now contained in the working set
$\calW$. If this condition does not hold, this implies that even if there is an acceptable
step for this patch with a smaller trust-region radius, the resulting actual reduction cannot
give the maximizer of $\max_{(\tilde{k},\tilde{D}) \in \calA} \ared^{n,\tilde{k},\tilde{D}}$
and, consequently, $(k,D)$ is removed from $\calW$ without replacement.
This condition is derived from \Cref{ass:F_regularity} and allows us to prove finite termination of the 
inner/block-update loop if $w^n$ is not stationary because it can happen $w^n$ is stationary on $D$
but has positive predicted reduction for all positive trust-region radii, which would lead to infinite reduction
of the trust-region radius without this safeguarding.
Once $\calW = \emptyset$, that is, if there are no more working patches we terminate
\Cref{alg:slipsub_greedy}.
An example run of this inner/block-update loop for four patches is given in \Cref{tbl:ex_tab_loop}.
We note that a straightforward parallel version of 
\cref{alg:slipsub_greedy} arises by optimizing in parallel over the elements
of $\calW$. However, our computational results in \cref{sec:2Dtestcase} indicate
that the runtimes between the elements of $\calW$ may vary dramatically and we
find it more advisable to use parallelization on the subproblem solver level.

\begin{table}
\begin{center}
  \begin{tabular}{ c | c c c c | c | c }
\toprule
   \diaghead{\theadfont $\calD$}%
  {$\calD$}{$k$ }&
\thead{$D_1$}&\thead{$D_2$} & \thead{$D_3$} & \thead{$D_4$} & $\calA$ & $\calW$\\
   \midrule
   0 & \darkgreen{$\tilde{w}^{n,0, 1}$} & \darkgreen{$\tilde{w}^{n,0, 2}$} & \darkorange{$\tilde{w}^{n,0, 3}$} & \darkgreen{$\tilde{w}^{n,0, 4}$} & $\emptyset$ & $\{(1,1), (1, 2), (1, 4)\}$ \\
   1 & \darkgreen{$\tilde{w}^{n,1, 1}$} & \darkgreen{$\tilde{w}^{n,1, 2}$} & -- & \blue{$\tilde{w}^{n,1, 4}$} & $\{(1,4)\}$ & $\{(2,1), (2, 2)\}$  \\
   2 & \darkgreen{$\tilde{w}^{n,2, 1}$} & \violet{$\tilde{w}^{n,2, 2}$} & -- & --  & $\{(1,4)\}$ & $\{(3, 1)\}$  \\
   3 & \darkgreen{$\tilde{w}^{n,3, 1}$} & -- & -- & -- & $\{(1,4)\}$ & $\{(4,1)\}$  \\
   4 & \blue{$\tilde{w}^{n,4, 1}$}  & -- & -- &--  & $\{(4, 1),(1,4)\}$ & $\emptyset$ \\
   \bottomrule
  \end{tabular}
  \end{center}
  \caption{Example inner-loop iteration for $\vert \calD\vert = 4$ at line 17.
  	\darkorange{Orange} signifies that $\pred^{n,k,D} = 0$ so that $w^n$ is already stationary on
  	$D$ and therefore $(k,D)$ can safely be removed from $\calW$.
  	\darkgreen{Green} signifies that signifies Line \ref{ln:not_pred_zero_or_ared_domination} holds:
  	$\tilde{w}^{n,k,D}$ is not yet acceptable and also not dominated by some previous iterate
  	that was found acceptable. \blue{Blue} signifies Line \ref{ln:suff_dec} holds: $\tilde{w}^{n,k,D}$ is
  	acceptable and $(k,D)$ can be transferred from $\calW$ to $\calA$.
  	\violet{Violet} signifies that $\pred^{n,k,D} > 0$ but neither Line \ref{ln:not_pred_zero_or_ared_domination}
  	nor Line \ref{ln:suff_dec} holds: the possible actual reduction for all further trust-region radii
  	is dominated by some element of $\calA$ and $(k,D)$ can safely be removed from $\calW$ too.}
  \label{tbl:ex_tab_loop}
\end{table}

In the outer loop, once $\calA$ is the nullset, then by construction
$w^n$ is stationary and we terminate the algorithm.
If $\calA$ is nonempty, then we define trial variable
$\bar{w}^n = w^n$ and compute trial function value $\bar{j}^0$.
We then enter a ``greedy'' update loop; this loop inserts the
patch updates to the overall solution and measure decrease
of the actual function. In effect, we iterate over stored
maximum block actual reductions
$(\bar k, \bar D)\gets \argmax\{\ared^{n,k,D}|(k,D)\in \calA\}$, insert the resulting solution
$\tilde{w}^n$ associated with $(\bar k, \bar D)$ into $\bar{w}^n$ and compute a new trial function value.
% The maximum argument is required for greedy acceptance;
% other strategies (nonoverlap or nonbordering are thinkable or might be better suited).
If this function value is less than $\bar{j}^0$, then we eliminate this $(k, D)$ from
the set of acceptable steps, keep the $\bar{w}^n$ with the associated $\tilde{w}^n$, and repeat
until $\calA= \emptyset$.

By construction, the first (greedy) patch update will always satisfy the acceptance criterion;
further updates are heuristic improvements that are
accepted until the cost function increases.

\section{Convergence Analysis of \texorpdfstring{\cref{alg:slipsub_greedy}}{TEXT}}\label{sec:convergence}
We provide a convergence analysis for a greedy patch selection approach
under two assumptions on the set of patches.
\begin{assumption}[Sufficient overlap and regularity of patches]\label{ass:patches}
Let $\Omega$ be a bounded Lipschitz domain.
Let the finite set of patches $\calD \subset 2^\Omega$ satisfy the following conditions.
\begin{enumerate}[label=(\alph*)]
\item For all $x \in \Omega$, there exist $r > 0$ and $D \in \calD$ such that
      $\overline{B_r(x)} \subset \subset D$. (Patch overlap)\label{itm:overlap}
\item For all $D \in \calD$, we assume that $D \in \calD$ is a bounded Lipschitz domain.
     (Patch regularity)\label{itm:regularity}
\end{enumerate}
\end{assumption}
Because $\calD$ is finite, \Cref{ass:patches} \ref{itm:overlap} is equivalent
to \cref{ass:calD_open_cover} and in particular, the analysis in \cref{sec:localization} may be applied.
Consequently, \cref{thm:stationarity_localization} gives that the instationarity condition
\eqref{eq:instationarity} implies a localization to at least one patch,
on which patch-instationarity \eqref{eq:patch_instationarity} holds.
\Cref{ass:patches} \ref{itm:regularity} implies that the patches themselves are sets of finite
perimeter and the assumptions for the analysis of the inner loop of Algorithm 1 from \cite{manns2023on}
can be applied to trust-region subproblems $\text{{\ref{eq:trp}}}(w^{n-1}, \nabla F(w^{n-1}), D^n, \Delta^{0}2^{-k})$
for some $\Delta^0 > 0$ as are generated by \cref{alg:slipsub_greedy}.
While the analysis is not impaired by \cref{ass:patches} \ref{itm:regularity},
it facilitates the intuition greatly when considering patches that are convex and
have boundaries that are finite unions of $d-1$-dimensional convex polyhedra.

In order to analyze \cref{alg:slipsub_greedy} and the trust-region subproblems,
we briefly recall what we mean by $\ared$ and $\pred$ for a
given sequence $\{w^{\ell}\}_\ell \subset \BVW(\Omega)$ that converges strictly to some
limit $w \in \BVW(\Omega)$.
Let $k \in \N$, $D \in \calD$, and $\tilde{w}^{\ell,k,D}$ be a solution to
$\text{{\ref{eq:trp}}}(w^{\ell}, \nabla F(w^{\ell}), D, \Delta^{0} 2^{-k})$,
which exists by means of \cref{prp:trp_existence}. Then we define
\begin{align*}
{\pred}^{\ell,k,D} &\coloneqq (\nabla F(w^{\ell}), w^{\ell} - \tilde{w}^{\ell,k,D})_{L^2(\Omega)}
+ \alpha \TV(w^{\ell}) - \alpha \TV(\tilde{w}^{\ell,n,D}) \text{ and}\\
{\ared}^{\ell,k,D} &\coloneqq F(w^{\ell}) + \alpha \TV(w^{\ell})
- F(\tilde{w}^{n,k,D}) - \alpha\TV(\tilde{w}^{\ell,k,D}).
\end{align*}
We now aim to show that if the limit of such a sequence is instationary, 
then there is a patch such that the sufficient decrease condition that is also in 
Line \ref{ln:suff_dec} is eventually satisfied.
Our argument uses so-called competitors that modify a weakly-$^*$ converging sequence $w^\ell \weakstarto w$ in 
$\BVW(\Omega)$ so that the resulting sequence coincides with $\{w^\ell\}_\ell$ on $D^c$
but has properties of a 
different function $\hat{w}$ inside a patch $D$ and
does not affect the boundaries of the level sets close to $\partial D$.
Their existence is asserted below.
\begin{lemma}\label{lem:competitor}
Let \cref{ass:patches} hold. Let $D \in \calD$ and $D_r \coloneqq \{ x \in D \,|\, \dist(x, D^c) > r \}$ for $r > 0$.
Let $w^\ell \weakstarto w$ in $\BVW(\Omega)$. Let $\hat{w} \in \BVW(\Omega)$ and $K \subset \subset D$ with
$\supp w - \hat{w} \subset \subset K$.

Then there exists $s > 0$ only depending on $K$ such that
$w$, $w^\ell$ for $\ell \in \N$, and
the functions defined by
\begin{gather}\label{eq:construct_wkl}
\hat{w}^{\ell}(x) \coloneqq \left\{
\begin{aligned}
w^{\ell}(x) & \text{ if } x \notin D_{s}, \\
\hat{w}(x) & \text{ else}
\end{aligned}
\right.
\end{gather}
for $\ell \in \N$ satisfy $\hat{w}^{\ell} \weakstarto \hat{w}$ in $\BVW(\Omega)$ as $\ell \to \infty$ and
\begin{align*}
\TV(w)          &= \TV{}_{\Omega\setminus \overline{D_s}}(w) + \TV{}_{D_s}(w) \\
\TV(\hat{w}^{\ell}) &= {\TV}_{\Omega\setminus \overline{D_s}}(w^{\ell}) + {\TV}_{D_s}(\hat{w}) + b^\ell
\end{align*}
for some $\{b^\ell\}_\ell \subset [0,\infty)$ with $\liminf_{\ell \to \infty} b^\ell = 0$. Moreover,
if $w^\ell \to w$ also strictly in $\BVW(\Omega)$, we have 
\begin{align*}
{\TV}_{\Omega\setminus \overline{D_s}}(w^\ell) \to {\TV}_{\Omega\setminus \overline{D_{s}}}(w)&\text{ and}\\
\quad{\TV}_{D_s}(w^\ell) \to {\TV}_{D_s}(w)&.
\end{align*}
\end{lemma}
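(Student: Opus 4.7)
The plan is to pick a single level $s\in(0,\dist(K,D^c))$ so that the hypersurface $\partial D_s$ is ``clean'' simultaneously for the three relevant ingredients: $K\subset\subset D_s$, the finite Radon measures $|Dw|$, $|D\hat w|$, and every $|Dw^\ell|$ put no mass on $\partial D_s$, and a coarea argument furnishes a subsequence along which the jump of $\hat w^\ell$ across $\partial D_s$ vanishes. With such an $s$, the two identities reduce to standard additivity together with the $BV$-gluing formula across a Lipschitz interface, and the weak-$*$ and strict-convergence statements follow routinely.

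For the choice of $s$, note that $R := \dist(K,D^c) > 0$ because $K\subset\subset D$, so for every $s\in(0,R)$ we have $K\subset\subset D_s$. Each of the countably many finite Radon measures $|Dw|$, $|D\hat w|$, $|Dw^\ell|$ ($\ell\in\N$) charges only countably many of the pairwise disjoint level sets $\partial D_t$, hence there is a co-null set $E_1\subset(0,R)$ on which none of them charges $\partial D_s$. For $s\in E_1$, additivity over $\Omega=(\Omega\setminus\overline{D_s})\cup\partial D_s\cup D_s$ yields $\TV(w)=\TV_{\Omega\setminus\overline{D_s}}(w)+\TV_{D_s}(w)$. The second identity follows from the standard formula for $BV$-gluing across a Lipschitz interface (see, e.g., \cite[\S16]{maggi2012sets}): since $\supp(w-\hat w)\subset\subset K\subset D_s$, the inner trace of $\hat w$ on $\partial D_s$ coincides with the trace of $w$, so the interfacial contribution reduces to $b^\ell:=\int_{\partial D_s}|w^\ell-w|\,\dd\Ha^{d-1}\ge 0$ interpreted via traces. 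Uniform boundedness of $\TV(\hat w^\ell)$ follows from $L^\infty$-bounds on $\BVW(\Omega)$ together with $\Ha^{d-1}(\partial D_s)<\infty$, which uses the patch regularity in \cref{ass:patches}\ref{itm:regularity}, and $\|\hat w^\ell-\hat w\|_{L^1(\Omega)}=\|w^\ell-w\|_{L^1(\Omega\setminus D_s)}\to 0$ then yields $\hat w^\ell\weakstarto\hat w$ in $\BVW(\Omega)$.

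The main step is $\liminf_\ell b^\ell=0$. I would apply the coarea formula to the $1$-Lipschitz function $h:=\dist(\cdot,D^c)$, which satisfies $|\nabla h|=1$ a.e.\ in $D$: fixing any $(r_1,r_2)\subset(0,R)$, I obtain $\int_{r_1}^{r_2}g^\ell(t)\,\dd t\le\|w^\ell-w\|_{L^1(\Omega)}\to 0$ where $g^\ell(t):=\int_{\partial D_t}|w^\ell-w|\,\dd\Ha^{d-1}$, so $g^\ell\to 0$ in $L^1(r_1,r_2)$. A subsequence $g^{\ell_j}$ converges pointwise to zero on a co-null set $E_2\subset(r_1,r_2)$. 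Choosing $s\in E_1\cap E_2$, which is nonempty since both sets are co-null, the coarea integrand and the trace integrand defining $b^\ell$ agree $\Ha^{d-1}$-a.e.\ on $\partial D_s$ precisely because $s\in E_1$ forces inner and outer traces of $w^\ell$ and $w$ on $\partial D_s$ to coincide, whence $b^{\ell_j}=g^{\ell_j}(s)\to 0$. For the strict-convergence claim, weak-$*$ lower semicontinuity on the open sets $D_s$ and $\Omega\setminus\overline{D_s}$ gives two $\liminf$ inequalities; summing them and using $\TV(w^\ell)\to\TV(w)$ together with the exact splittings $\TV(w)=\TV_{\Omega\setminus\overline{D_s}}(w)+\TV_{D_s}(w)$ and $\TV(w^\ell)=\TV_{\Omega\setminus\overline{D_s}}(w^\ell)+\TV_{D_s}(w^\ell)$ (both valid because $s\in E_1$) forces each $\liminf$ to be an actual limit.

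The main obstacle lies in reconciling ``$s$ must be fixed once for the whole sequence'' with ``pointwise coarea convergence only holds along an a.e.-subsequence.'' The resolution is that $\|g^\ell\|_{L^1(r_1,r_2)}\to 0$ is a statement along the full sequence, so the usual $L^1$-to-pointwise extraction produces a single subsequence $\ell_j$ and a single co-null set $E_2$ on which convergence holds; intersecting $E_2$ with the co-null set $E_1$ obtained from the countable-union measure exclusion then yields an admissible choice of $s$ that is sequence-independent and lies in a range depending only on $K$.
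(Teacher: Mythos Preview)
Your argument is correct and follows the same overall strategy as the paper: choose a level $s$ of the distance function so that $\partial D_s$ carries none of the relevant variation measures, use a gluing formula to identify the interfacial term $b^\ell$, and control it via the coarea formula. Two technical differences are worth noting.

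First, where you invoke the $BV$-gluing formula and work with traces of $w$ and $w^\ell$ on $\partial D_s$, the paper instead decomposes each function into its Caccioppoli partition $\{E_i\}$, $\{E_i^\ell\}$, $\{\hat E_i\}$ and applies Theorem~16.16 of \cite{maggi2012sets} (their \cref{lem:eq_1635}) to the glued level sets $\hat E_i^\ell=(\hat E_i\cap D_s)\cup(E_i^\ell\setminus D_s)$. This yields an explicit formula for $b^\ell$ as a weighted sum of $\Ha^{d-1}\bigl(\partial^*\hat E_j^\ell\cap\partial D_s\cap((E_i)^{(1)}\symdif(E_i^\ell)^{(1)})\bigr)$, and the coarea formula is then applied to the symmetric differences $E_i^\ell\symdif E_i$ directly. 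Your trace-based route is more compact, but the identification of the coarea slice of the $L^1$ function $|w^\ell-w|$ with the $BV$ trace integrand on $\partial D_s$ is a step the paper sidesteps entirely by staying at the set level.

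Second, to obtain $\liminf_\ell b^\ell=0$ you pass from $g^\ell\to 0$ in $L^1(r_1,r_2)$ to a pointwise-a.e.\ convergent subsequence and then intersect the resulting co-null set $E_2$ with $E_1$. The paper instead applies Fatou's lemma to $\int g^\ell(s)\,ds\to 0$ and reads off $\liminf_\ell g^\ell(s)=0$ for a.e.\ $s$ in one line, with no subsequence extraction. Since the claim is only a $\liminf$, Fatou is the more direct tool here; your extraction works but is a detour.
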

\begin{proof}
Let $\{E_1,\ldots,E_M\}$, $\{E_1^\ell,\ldots,E_M^\ell\}$, $\{\hat{E}_1,\ldots,\hat{E}_M\}$, and
$\{\hat{E}_1^{\ell},\ldots,\hat{E}_M^{\ell}\}$ denote the Caccioppoli partitions of $\Omega$ such that
the identities $w = \sum_{i=1}^M w_i \chi_{E_i}$, $w^\ell = \sum_{i=1}^M w_i \chi_{E_i^\ell}$, $\ell \in \N$,
$\hat{w} = \sum_{i=1}^M w_i \chi_{\hat{E}_i}$, and $\hat{w}^{\ell} = \sum_{i=1}^M w_i \chi_{\hat{E}_i^{\ell}}$,
$\ell \in \N$, hold \cite[Lemma 2.1]{manns2023on}. Then we observe that there is a small enough $r_0 > 0$ such that
$K \subset \subset D_{r_0} \subset \subset D$.

Next, we argue similar to \cite[Theorem 21.14]{maggi2012sets} to identify $0 < s < r_0$ such that
modifying $w^\ell$ on $D_s$ to obtain the competitor $\hat{w}^{\ell}$ implies the claimed properties. Specifically,
we need that i) $\partial D_s$ does not intersect with the reduced boundaries $\partial^*E_i$ and $\partial^*E_i^\ell$ 
 (for $\ell \in \N$) on a set of strictly positive $\Ha^{d-1}$-measure, and ii) the $\Ha^{d-1}$-measure of the intersection of 
$\partial D_s$ and the symmetric difference of the points of density one of $E_i$ and $E_i^\ell$ vanishes
as $\ell \to \infty$. This is used to ensure that $\partial D_s$ does not affect the boundary of the level sets of the 
competitor and thus the value of its total variation.

To this end, let $r_{\max}$ be large enough such that $D_{r_{\max}} = \emptyset$. We obtain for all $r > 0$ that
\[
\|w - w^\ell\|_{L^1}
\ge \sum_{i=1}^M\big|D_r\cap \big(E_i^\ell \symdif E_i\big)\big|
= \int_r^{r_{\max}} \sum_{i=1}^M \Ha^{d-1}\left(\partial D_s \cap
\left((E_i^\ell)^{(1)} \symdif E_i^{(1)}\right)\right) \dd s,
\]
where the second identity holds due to the coarea formula, which can be applied using the
Lipschitz continuity of the function $x \mapsto \dist(x, D^c)$.

From Proposition 2.16 in \cite{maggi2012sets}, we obtain for all $i \in \{1,\ldots,M\}$ and
a.e.\ $s > 0$ that
\begin{align}
\Ha^{d-1}(\partial D_{s} \cap \partial^* E_i) &= 0 \text{ and} \label{eq:D_bnd_zero_intersection_limit}\\
\Ha^{d-1}(\partial D_{s} \cap \partial^* E_i^\ell) &= 0
\text{ for all } \ell \in \N.\label{eq:D_bnd_zero_intersection_iterate}
\end{align}
We apply Fatou's lemma and obtain
\begin{gather}\label{eq:D_liminf_to_zero}
\liminf_{\ell \to \infty}
\sum_{i=1}^M \Ha^{d-1}\left(\partial D_s \cap
\left((E_i^\ell)^{(1)} \symdif E_i^{(1)}\right)\right)
= 0
\end{gather}
for a.e.\ $s > 0$.

Let $s \in (0,r_0)$ satisfy	\eqref{eq:D_bnd_zero_intersection_limit}, \eqref{eq:D_bnd_zero_intersection_iterate}
for all $\ell \in \N$, and \eqref{eq:D_liminf_to_zero}.
Then $\hat{w}^{\ell} \weakstarto \hat{w}$ in $\BVW(\Omega)$ as $\ell \to \infty$ as claimed.

Next, we use \cref{lem:eq_1635} from the literature to characterize the distributional derivative of $\chi_{\hat{E}_i^\ell}$.
To verify its assumptions, we first recall that $\Ha^{d-1}(\partial D_s\cap\partial^*E_i^\ell) = 0$ holds for all $\ell \in \N$
by virtue of \eqref{eq:D_bnd_zero_intersection_iterate}. Second, $\hat{w}(x) \neq w(x)$ can only hold for
$x \in K \subset \subset D_{s}$ so that
$\Ha^{d-1}(\partial D_s\cap\partial^*\hat{E}_i) = \Ha^{d-1}(\partial^*D_s\cap\partial^*E_i) = 0$
holds by virtue of \eqref{eq:D_bnd_zero_intersection_limit}. Finally, we observe that
$\hat{E}_i^{\ell} = (\hat{E}_i \cap D_s) \cup (E_i^\ell \setminus D_s)$ holds by definition in
\eqref{eq:construct_wkl}. We thus apply \cref{lem:eq_1635} and obtain
\begin{align*}
D\chi_{\hat{E}_i^{\ell}}
&= D\chi_{E_i^{\ell}} \mres (\Omega \setminus \overline{D_{s}})
+ D\chi_{\hat{E}_i} \mres D_{s} \\
&\hphantom{=} + D\chi_{D_{s}} \mres \left((\hat{E}_i)^{(1)} \cap (E_i^{\ell})^{(0)}\right)
- D\chi_{D_{s}} \mres \left((\hat{E}_i)^{(0)} \cap (E_i^{\ell})^{(1)}\right).
\end{align*}
We employ the $\sigma$-additivity of $\Ha^{d-1}$ and the identity
$(E_i)^{(b)} \cap \partial D_{s} = (\hat{E}_i)^{(b)} \cap \partial D_{s}$
for $b \in \{0,1\}$ due to $K \subset \subset D_s$
to obtain
\begin{align*}
\Ha^{d-1} \mres \partial^*\hat{E}_i^{\ell}
&= \Ha^{d-1} \mres \left(\partial^*E_i^{\ell} \cap (\Omega \setminus \overline{D_{s}})\right)\\
&\hphantom{=}+  \Ha^{d-1} \mres \left(\partial^*\hat{E}_i \cap D_{s}\right)\\
&\hphantom{=}+  \Ha^{d-1} \mres \left(\partial D_{s} \cap (E_i)^{(1)} \cap (E_i^{\ell})^{(0)}\right)\\
&\hphantom{=}+  \Ha^{d-1} \mres \left(\partial D_{s} \cap (E_i)^{(0)} \cap (E_i^{\ell})^{(1)}\right)
\end{align*}
for the corresponding variation measures.
We insert $\partial^* \hat{E}_j^{\ell}$, $j \neq i$, on both sides, use
$\hat{E}_i^{\ell} \cap D_s = \hat{E}_i \cap D_s$ and
$\hat{E}_i^{\ell} \cap (\Omega \setminus \overline{D_{s}})
= E_i^{\ell} \cap (\Omega \setminus \overline{D_{s}})$
due to \eqref{eq:construct_wkl},
and the disjointness of the sets in the last two terms to obtain
\begin{align*}
\Ha^{d-1}\big(\partial^*\hat{E}_i^{\ell} \cap \partial^* \hat{E}_j^{\ell}\big)
&= \Ha^{d-1}\left(\partial^*E_i^{\ell} \cap \partial^* E_j^{\ell}
\cap (\Omega \setminus \overline{D_s})\right) \\
&\hphantom{=}+
\Ha^{d-1}\left(\partial^*\hat{E}_i \cap \partial^* \hat{E}_j \cap D_s\right)\\
&\hphantom{\le}+  \Ha^{d-1} \left(
\partial^* \hat{E}_j^{\ell} \cap
\partial D_s \cap \Big(\big((E_i)^{(1)} \cap
(E_i^{\ell})^{(0)} \big) \cup
\big((E_i)^{(0)} \cap (E_i^{\ell})^{(1)}\big)\Big)\right).
\end{align*}
Using \cref{lem:symdif} on the symmetric difference of sets of points of density $1$, we obtain
\begin{align*}
\Ha^{d-1}\big(\partial^*\hat{E}_i^{\ell} \cap \partial^*\hat{E}_j^{\ell} \cap \Omega \big)
&= \Ha^{d-1}\big(\partial^*E_i^{\ell} \cap \partial^* E_j^{\ell} \cap (\Omega \setminus \overline{D_s})\big)
\\
&\hphantom{=}+
\Ha^{d-1}\big(\partial^* \hat{E}_i \cap \partial^* \hat{E}_j \cap D_s\big)\\
&\hphantom{=}+  \underbrace{\Ha^{d-1} \left(
	\partial^*\hat{E}_j^{\ell} \cap \partial D_s
	\cap \big((E_i)^{(1)} \symdif (E_i^{\ell})^{(1)}\big)\right)}_{\eqqcolon b_{ij}^\ell}.
\end{align*}
Multiplying by $|w_i - w_j|$ and summing over $i$ and $j$ yields
\[ \TV(\hat{w}^{\ell}) = {\TV}_{\Omega\setminus \overline{D_s}}(w^{\ell}) + {\TV}_{D_s}(\hat{w}) + b^\ell. \]
with $b^\ell \coloneqq \sum_{i=1}^{M-1}\sum_{j=i+1}^M|w_i - w_j|b_{ij}^\ell$,
where \eqref{eq:D_liminf_to_zero} gives $\liminf_{\ell \to \infty} b^\ell = 0$.
Moreover, from \eqref{eq:D_bnd_zero_intersection_limit}
and $K \subset \subset D_{s}$, we obtain
\[ \TV(w) = {\TV}_{\Omega\setminus \overline{D_s}}(w) + {\TV}_{D_s}(w). \]
In addition, \eqref{eq:D_bnd_zero_intersection_limit}, $\TV(w^\ell) \to \TV(w)$,
and the lower semicontinuity of the total variation yield
\[
{\TV}_{\Omega\setminus \overline{D_s}}(w^\ell)
\to {\TV}_{\Omega\setminus \overline{D_{s}}}(w)
\quad\text{ and }\quad
{\TV}_{D_s}(w^\ell) \to {\TV}_{D_s}(w). \]
\end{proof}
\begin{lemma}\label{lem:instationarity}
	Let \cref{ass:F_regularity} hold. Let $w \in \BVW(\Omega)$ not be stationary. Let $\{w^\ell\}_\ell \subset \BVW(\Omega)$ converge 
	strictly to $w$. Let $\nabla F(w) \in C(\bar{\Omega})$.
	Then there exist $D \in \calD$, $k_0 \in \N$, $\ell_0 \in \N$, and $\varepsilon > 0$ such that
	there are infinitely many $\ell \ge \ell_0$ that satisfy
	\[ {\ared}^{\ell,k_0,D} \ge \sigma {\pred}^{\ell,k_0,D}
	   \enskip\text{and}\enskip
	   {\pred}^{\ell,k_0,D} > \varepsilon. \]
\end{lemma}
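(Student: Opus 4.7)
The plan is to combine the patch localization from \cref{thm:stationarity_localization} with the competitor construction in \cref{lem:competitor} and the first-variation calculus underlying Theorem~4.6 in \cite{manns2023on} to produce, for each $\ell$ along a subsequence, a feasible trial point for a patch trust-region subproblem whose linearized objective is uniformly negative. The Lipschitz gradient assumption then converts this bound into the sufficient decrease condition.

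Since $w$ is not stationary, \cref{thm:stationarity_localization} yields a patch $D \in \calD$ on which $w$ is not patch-stationary: there exist $\phi \in C_c^\infty(D,\R^d)$ and $\eta > 0$ satisfying \eqref{eq:instationarity}. I would associate to $\phi$ a one-parameter family of local variations $f_t$ supported in some compact $K \subset\subset D$ independent of $t$, push the level sets of $w$ forward under $f_t$ to obtain $\hat{w}_t \in \BVW(\Omega)$ with $\hat{w}_t = w$ on $\Omega \setminus K$, and invoke the first-variation identity from Theorem~4.6 of \cite{manns2023on}. Because $F$ is Fr\'echet differentiable and $\|\hat{w}_t - w\|_{L^1} = O(t)$, there exist constants $c, C > 0$ (depending on $\eta$, $\phi$, $\nabla F(w)$) such that, for all sufficiently small $t > 0$,
\begin{equation*}
\inner{\nabla F(w)}{\hat{w}_t - w}_{L^2} + \alpha \TV(\hat{w}_t) - \alpha \TV(w) \leq -c t
\quad\text{and}\quad
\|\hat{w}_t - w\|_{L^1} \leq C t.
\end{equation*}
Since $\supp(w - \hat{w}_t) \subset K \subset\subset D$, I then apply \cref{lem:competitor} to $\hat{w}_t$ and $\{w^\ell\}_\ell$, obtaining $s > 0$ and competitors $\{\hat{w}^\ell_t\}_\ell$ agreeing with $w^\ell$ on $\Omega \setminus D_s$ and with $\hat{w}_t$ on $D_s$, together with $\{b^\ell\}_\ell$ satisfying $\liminf_{\ell \to \infty} b^\ell = 0$. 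Using continuity of $\nabla F$, strict convergence $w^\ell \to w$, the convergences $\TV_{\Omega \setminus \overline{D_s}}(w^\ell) \to \TV_{\Omega \setminus \overline{D_s}}(w)$ and $\TV_{D_s}(w^\ell) \to \TV_{D_s}(w)$ from \cref{lem:competitor}, the decompositions $\TV(w) = \TV_{\Omega\setminus\overline{D_s}}(w) + \TV_{D_s}(w)$ and $\TV(\hat{w}_t) = \TV_{\Omega\setminus\overline{D_s}}(w) + \TV_{D_s}(\hat{w}_t)$ (the latter because $\hat{w}_t = w$ on $\Omega\setminus K$), and extracting a subsequence along which $b^\ell \to 0$, I obtain
\begin{equation*}
\inner{\nabla F(w^\ell)}{\hat{w}^\ell_t - w^\ell}_{L^2} + \alpha \TV(\hat{w}^\ell_t) - \alpha \TV(w^\ell) \to \inner{\nabla F(w)}{\hat{w}_t - w}_{L^2} + \alpha \TV(\hat{w}_t) - \alpha \TV(w) \leq -ct,
\end{equation*}
together with $\|\hat{w}^\ell_t - w^\ell\|_{L^1} \to \|\hat{w}_t - w\|_{L^1} \leq Ct$ along the same subsequence.

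It remains to calibrate $t$ and $k_0$. I pick $k_0 \in \N$ as the smallest index with $\Delta^0 2^{-k_0} \geq 2Ct$, so that $\Delta^0 2^{-k_0} \leq 4Ct$, and then shrink $t$ further until $\tfrac{L_{\nabla F}}{2}(4Ct)^2 \leq \tfrac{(1-\sigma)ct}{2}$; set $\varepsilon \coloneqq \tfrac{ct}{2}$. For $\ell$ large in the subsequence, $\hat{w}^\ell_t$ is feasible for $\text{{\ref{eq:trp}}}(w^\ell, \nabla F(w^\ell), D, \Delta^0 2^{-k_0})$, so the optimality of $\tilde{w}^{\ell,k_0,D}$ combined with the previous display yields $\pred^{\ell,k_0,D} \geq \varepsilon$. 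Assumption~\ref{ass:F_regularity}\ref{itm:nablaF_Lipschitz} then gives
\begin{equation*}
|\ared^{\ell,k_0,D} - \pred^{\ell,k_0,D}| \leq \tfrac{L_{\nabla F}}{2}\|\tilde{w}^{\ell,k_0,D}-w^\ell\|_{L^1}^2 \leq \tfrac{L_{\nabla F}}{2}(\Delta^0 2^{-k_0})^2 \leq (1-\sigma)\varepsilon \leq (1-\sigma)\pred^{\ell,k_0,D},
\end{equation*}
which rearranges to $\ared^{\ell,k_0,D} \geq \sigma \pred^{\ell,k_0,D}$.

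The main obstacle is the simultaneous calibration in the third paragraph: the dyadic trust-region radius must be large enough to render $\hat{w}^\ell_t$ feasible, yet small enough that the quadratic Lipschitz remainder is dominated by the linear predicted reduction. This balance works only because the first-variation construction makes the linearized decrease $\Theta(t)$ while the competitor has $L^1$-distance $O(t)$, so the remainder is $O(t^2)$. A further subtlety is that \cref{lem:competitor} only guarantees $\liminf_\ell b^\ell = 0$ rather than $b^\ell \to 0$, which is precisely why the lemma promises the conclusion for infinitely many $\ell \geq \ell_0$ instead of all large $\ell$.
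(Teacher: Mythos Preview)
Your approach is essentially the paper's: localize instationarity to a patch via \cref{thm:stationarity_localization}, push forward the level sets of $w$ by a local variation supported in $D$, graft this onto $w^\ell$ via the competitor \cref{lem:competitor}, and calibrate the trust-region radius against the variation parameter $t$. The competitor analysis and the use of strict convergence to pass the $\TV$-terms to the limit are handled just as in the paper.

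One concrete slip: in your calibration you write ``pick $k_0$ as the \emph{smallest} index with $\Delta^0 2^{-k_0} \ge 2Ct$, so that $\Delta^0 2^{-k_0} \le 4Ct$''. Since $k \mapsto \Delta^0 2^{-k}$ is decreasing, the smallest such index is $k_0 = 0$ (for small $t$), which gives $\Delta^0 2^{-k_0} = \Delta^0$ and certainly not $\le 4Ct$. You want the \emph{largest} such $k_0$, so that $\Delta^0 2^{-(k_0+1)} < 2Ct$ forces $\Delta^0 2^{-k_0} < 4Ct$. With this fix the quadratic--linear balancing goes through.

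A minor methodological difference worth noting: you invoke \cref{ass:F_regularity}\,\ref{itm:nablaF_Lipschitz} to obtain the clean quadratic remainder $|\ared - \pred| \le \tfrac{L_{\nabla F}}{2}(\Delta^0 2^{-k_0})^2$, which makes the calibration transparent. The paper's proof uses only the Fr\'echet differentiability part \ref{itm:F_Frechet}, writing $\ared = \pred + o(\Delta^0 2^{-k_0})$, and then threads the calibration through the specific sequence $t^k$ from Lemma~3.8 of \cite{manns2023on} with $\|f_{t^k}^{\#}w - w\|_{L^1} = (1 - 1/k)\Delta^0 2^{-k}$. Your route is shorter under the full \cref{ass:F_regularity}; the paper's route shows that the Lipschitz constant is not actually needed for this lemma.
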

\begin{proof}
	Because $w$ is not stationary, \cref{thm:stationarity_localization} gives
	that there exists a patch $D \in \calD$ such that \eqref{eq:stationarity} is violated on $D$,
	that is there exist $\eta > 0$ and $\phi \in C_c^\infty(\Omega,\R^d)$ with $\supp \phi \subset \subset D$ such that
	\begin{gather}\label{eq:instationarity_on_D}
	\sum_{i=1}^{M - 1} \sum_{j=i + 1}^M
	\int_{\partial^*{E}_i \cap \partial^* E_j}
	(w_j - w_i)\nabla F(w)(x)\phi(x)\cdot n_{E_i}(x)
	- \alpha |w_i - w_j| \bdvg{E_i} \phi(x)
	\dd \Ha^{d-1}(x) > \eta.
	\end{gather}
	Let $(f_t)_{t \in (-\varepsilon, \varepsilon)}$ be defined by $f_t(x) \coloneqq x + t \phi(x)$
	for $x \in \Omega$ and $t \in (-\varepsilon,\varepsilon)$ and
	$f_t^{\#} w \coloneqq \sum_{i=1}^M w_i \chi_{f_t(E_i)}$ if $\{E_1,\ldots,E_M\}$ is
	the Caccioppoli partition of $\Omega$ such that
	$w = \sum_{i=1}^M w_i \chi_{E_i}$.

	Lemma 3.8 in \cite{manns2023on} gives that we can choose a sequence
	$t^k \searrow 0$ and $k_0 \in \N$ such that for all $k \ge k_0$ we have
	$f_{t^k} \in C_c^\infty(D,\R^d)$ and
	\begin{gather}\label{eq:L1_locvar}
	\|f_{t^k}^{\#} w - w\|_{L^1} = \left(1 - \frac{1}{k}\right) \Delta^0 2^{-k}.
	\end{gather}
	Moreover, Lemmas 3.3 and 3.5 in \cite{manns2023on} give that there exists
	some function $g : (-\varepsilon,\varepsilon) \to \R$ that satisfies
	\begin{gather}\label{eq:pred_locvar}
	g(t) \in o(t)\quad\text{ and }\quad
	-(\nabla F(w), f_{t^k}^{\#} w - w)_{L^2}
	- (\alpha \TV(f_{t^k}^{\#} w) - \alpha \TV(w))
	\ge t^k \eta + g(t^k).
	\end{gather}
	Unfortunately, this violation cannot be used directly to obtain a bound from
	below on $\pred^{\ell,k,D}$ because $f_{t^k}^{\#} w$ is infeasible
	for $\textrm{{\ref{eq:trp}}}(w^\ell, \nabla F(w^\ell), D, \Delta^0 2^{-k_0})$
	since it does not agree with $w^\ell$ on $\Omega \setminus D$. Therefore,
	in order to relate this reduction of the linearized objective back to
	$\pred^{\ell,k,D}$, we construct competitors $w^{\ell,k}$, which eventually become
	feasible for $\textrm{{\ref{eq:trp}}}(w^\ell, \nabla F(w^\ell), D, \Delta^0 2^{-k_0})$
	for all	$\ell \ge \ell_0$. The necessary interdependent choices for $k_0 \in \N$
	and $\ell_0 \in \N$ will be determined after providing the competitor construction.

	\textbf{Competitor construction.}
	To this end, we use that the local variation $(f_t)_{t \in (-\varepsilon,\varepsilon)}$
	induced by $\phi$ has $\supp f_t - I \subset \subset D$ for all $t \in (-\varepsilon, \varepsilon)$
	because of $\supp \phi \subset \subset D$. Specifically, we apply \cref{lem:competitor} with the choices
	$\hat{w} = f_{t^k}w$, $k \in \N$, and $K = \supp \phi$ and obtain that there exists
	$D_s \subset \subset D$ with $\supp \phi \subset \subset D_s$ such that for all
	$k \in \N$ there are functions $\{w^{\ell,k}\}_\ell$ such that $w^{\ell,k}(x) = w^\ell(x)$ for
	$x \in \Omega\setminus D_s$ and $w^{\ell,k}(x) = f_{t^k}^{\#}w(x)$ for $x \in D_s$ such that
	\begin{align*}
	w^{\ell,k} &\weakstarto f^{\#}_{t^k}w \text{ in } \BVW(\Omega) \text{ for } \ell \to \infty,\\
	\TV(w^{\ell,k}) &= {\TV}_{\Omega\setminus \overline{D_s}}(w^{\ell})
	+ {\TV}_{D_s}(f_{t^k}^{\#}w) + b^\ell\text{ with } b^\ell \ge 0 \text{ and }\liminf_{\ell\to\infty} b^\ell = 0,\\
	\TV(w) &= {\TV}_{\Omega\setminus \overline{D_s}}(w) + {\TV}_{D_s}(w),\\
	{\TV}_{\Omega\setminus \overline{D_s}}(w^\ell)
	&\to {\TV}_{\Omega\setminus \overline{D_{s}}}(w),\\
	{\TV}_{D_s}(w^\ell) &\to {\TV}_{D_s}(w).
	\end{align*}

	\textbf{Determination of $k_0 \in \N$ and $\ell_0 \in \N$.} Let $k_0 \in \N$ be such that
	\eqref{eq:L1_locvar} and \eqref{eq:pred_locvar}
	hold for all $k \ge k_0$. Then $w^\ell \to w$ strictly, the competitor properties above,
	and  \cref{ass:F_regularity} \ref{itm:F_Frechet} imply that we can find $\ell_0(k_0) \in\N$ such that
	\begin{gather}\label{eq:bounds_to_zero}
	\begin{aligned}
	\|w^\ell - w\|_{L^1({D_s})} &\le \frac{1}{k_0} \Delta^0 2^{-k_0}, \\
	\|\nabla F(w^\ell) - \nabla F(w)\|_{L^\infty({D_s})} &\le \frac{1}{k_0}, \\
	|{\TV}_{D_s}(w^\ell) - {\TV}_{D_s}(w)| &\le \frac{1}{k_0} \Delta^0 2^{-k_0},\text{ and}\\
	b^\ell &\le \frac{1}{k_0} \Delta^0 2^{-k_0}
	\end{aligned}
	\end{gather}
	hold for infinitely many $\ell \ge \ell_0(k_0)$.
	Then the triangle equality gives that these $w^{\ell,k_0}$ are feasible for
	$\textrm{{\ref{eq:trp}}}(w^{\ell}, \nabla F(w^{\ell}), D, \Delta^0 2^{-k_0})$.
	For these $\ell$, we obtain from \cref{ass:F_regularity} \ref{itm:F_Frechet} that
	\[ {\ared}^{\ell,k_0,D} = {\pred}^{\ell,k_0,D} + o(\Delta^0 2^{-k_0}) \]
	Consequently, the feasibility of $w^{\ell,k_0}$ and thus suboptimality for
	$\textrm{{\ref{eq:trp}}}(w^\ell, \nabla F(w^\ell), D, \Delta^0 2^{-k_0})$ gives for
	\[ 
	p^{\ell,k_0} \coloneqq (\nabla F(w^\ell),w^\ell - w^{\ell,k_0})_{L^2}
	+ \alpha \TV(w^\ell) - \alpha \TV(w^{\ell,k_0}) \]
	that ${\pred}^{\ell,k_0,D} \ge p^{\ell,k_0}$ and in turn
	\begin{align*}
	{\ared}^{\ell,k_0,D}
	\ge \sigma {\pred}^{\ell,k_0,D} + (1-\sigma)p^{\ell,k_0}
	+ o(\Delta^0 2^{-k_0})
	\end{align*}
	hold.

	We consider the first term in $p^{\ell,k_0}$.
	After inserting suitable zeros, in particular using $w^\ell = w^{\ell,k_0}$
	in $\Omega\setminus \overline{D_s}$ and $\supp \phi \subset D_s$,
	we obtain by means of \eqref{eq:bounds_to_zero} that
	\begin{align*}
	(\nabla F(w^\ell),w^\ell - w^{\ell,k_0})_{L^2(\Omega)} &= (\nabla F(w), w -
	w^{\ell,k_0})_{L^2(D_s)}
	+ (\nabla F(w^\ell) - \nabla F(w), w - w^{\ell,k_0})_{L^2(D_s)}  \\
	&\quad + (\nabla F(w^\ell), w^\ell - w)_{L^2(D_s)} \\
	&\ge (\nabla F(w), w - f_{t^{k_0}}^{\#}w)_{L^2(\Omega)} \\
	&\quad -\|\nabla F(w^\ell) - \nabla F(w)\|_{L^\infty(D_s)}
	        \|w - f_{t^{k_0}}^{\#}w\|_{L^1(D_s)} \\
	&\quad - \|\nabla F(w^\ell)\|_{L^\infty(D_s)}\|w^\ell - w\|_{L^1(D_s)}\\
	&\ge (\nabla F(w), w - f_{t^{k_0}}^{\#}w)_{L^2(\Omega)} \\
	&\quad - \frac{1}{k_0}\|f_{t^{k_0}}^{\#}w - w\|_{L^1(D_s)} - c \|w^\ell - w\|_{L^1(D_s)}\\
	&= (\nabla F(w), w - f_{t^{k_0}}^{\#}w)_{L^2(\Omega)} 
	\underbrace{-\left(\frac{1}{1 - k_0} + c\right)\frac{1}{k_0} \Delta^0 2^{-k_0}}_{= o(\Delta^02^{-k_0})}.
	\end{align*}
	with $c = \sup_{\tilde{\ell} \ge \ell_0(k_0))} \|\nabla F(w^{\tilde{\ell}})\|_{L^\infty(D_s)}$, which is bounded because of the continuous differentiability of $F : L^1(\Omega) \to \R$ and the set $\{w^\ell\,|\,\ell \in \N\} \cup \{w\}$
	being a compact in $L^1(\Omega)$.
	
	Next, we consider the difference $\TV(w^\ell) - \TV(w^{\ell,k_0})$ and obtain
	with a similar procedure
	\begin{align*}
	\TV(w^\ell) - \TV(w^{\ell,k_0}) &= {\TV}_{D_s}(w) - {\TV}_{D_s}(w^{\ell,k_0})
	- b^\ell\\
	&\quad + {\TV}_{D_s}(w^\ell) - {\TV}_{D_s}(w) \\
	&\ge {\TV}(w) - {\TV}(f_{t^{k_0}}^{\#}w) \underbrace{- \frac{2}{k_0} \Delta^0 2^{-k_0}}_{=o(\Delta^02^{-k_0})}
	\end{align*}
	for all of the infinitely many suitable $\ell \ge \ell_0(k_0)$.

	In combination, we can deduce
	\begin{gather}\label{eq:pk_lower_bound}
	\begin{aligned}
	p^{\ell,k_0}
	&\ge (\nabla F(w),w - f_{t^{k_0}}^{\#}w)_{L^2}
	+ \alpha \TV(w^\ell) - \alpha \TV(f_{t^{k_0}}^{\#}w) + o(\Delta^02^{-k_0})\\
	&\ge t^{k_0} \eta + o(t^{k_0}) + o(\Delta^02^{-k_0}),
	\end{aligned}
	\end{gather}
	where the second inequality follows from \eqref{eq:pred_locvar}.
	Using \eqref{eq:L1_locvar}, we obtain
	\[ \Delta^02^{-k_0} = \frac{k_0}{k_0 - 1}\big\|f_{t^{k_0}}^{\#}w - w\big\|_{L^1}
	\le \kappa t^{k_0}
	\]
	for some $\kappa > 0$, where the last inequality follows from Lemma 3.8 in \cite{manns2023on}.
	Consequently, we obtain
	\[ {\ared}^{\ell,k_0,D}
	\ge \sigma {\pred}^{\ell,k_0,D} + (1-\sigma) \kappa t^{k_0} + o(t^{k_0}).
	\]
	Therefore, we can choose $k_0 \in \N$ large enough such that the sum of the second and third term
	is positive. We now set $\ell_0 \coloneqq \ell_0(k_0)$ and obtain for infinitely
	many $\ell \ge \ell_0$ due to \eqref{eq:D_liminf_to_zero} that
	${\ared}^{\ell,k_0,D} \ge \sigma {\pred}^{\ell,k_0,D}$
	By potentially choosing $k_0 \in \N$ larger and adjusting $\ell_0$ accordingly,
	we also obtain ${\pred}^{\ell,k_0,D} \ge p^{\ell,k_0} > \varepsilon > 0$
	for some $\varepsilon > 0$ from \eqref{eq:pk_lower_bound}.
\end{proof}

\begin{theorem}\label{thm:welldefinedness}
Let \cref{ass:F_regularity} hold. Let
$\nabla F(w^n) \in C(\bar{\Omega})$.
Then the loop starting in \cref{alg:slipsub_greedy} Line 
\ref{ln:tabulation_loop} terminates after finitely many 
iterations and $\calA$ is not empty if $w^n$ is not stationary.
\end{theorem}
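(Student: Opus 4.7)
The plan is to apply \cref{lem:instationarity} to the constant sequence $w^\ell \equiv w^n$, which trivially converges strictly to the instationary $w^n$, in order to secure a patch $D^* \in \calD$, an index $k_0^* \in \N$, and $\varepsilon > 0$ such that any minimizer of $\text{{\ref{eq:trp}}}(w^n, \nabla F(w^n), D^*, \Delta^0 2^{-k_0^*})$ satisfies both $\ared^{n,k_0^*,D^*} \ge \sigma \pred^{n,k_0^*,D^*}$ and $\pred^{n,k_0^*,D^*} > \varepsilon$. The payoff is that whenever the inner loop ever processes the pair $(k_0^*, D^*)$, the acceptance criterion of Line \ref{ln:suff_dec} is necessarily satisfied.

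Next I would exploit the monotonicity $\pred^{n,k+1,D} \le \pred^{n,k,D}$, which is immediate from the nested structure of the feasible sets of $\text{{\ref{eq:trp}}}(w^n, \nabla F(w^n), D, \Delta^0 2^{-k})$ as $k$ increases. This gives $\pred^{n,k,D^*} \ge \pred^{n,k_0^*,D^*} > \varepsilon > 0$ for every $0 \le k \le k_0^*$. I would then trace the trajectory of $D^*$ through the inner loop by case distinction: whenever $(k, D^*) \in \calW$ is processed, the strict positivity of $\pred^{n,k,D^*}$ rules out the exit triggered by $\pred = 0$, so either sufficient decrease is attained and $(k, D^*)$ is moved into $\calA$, or $D^*$ is rejected through Line \ref{ln:not_pred_zero_or_ared_domination}, which can only happen if $\calA$ already contains an entry with strictly positive $\ared$, or $D^*$ is incremented to $k+1$. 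The only remaining possibility is that $D^*$ is incremented all the way to $k=k_0^*$, where \cref{lem:instationarity} forces acceptance. In every case $\calA$ becomes nonempty at some point inside the inner loop, and since elements are never removed from $\calA$ there, $\calA \neq \emptyset$ upon termination.

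For finite termination I would show that $\pred^{n,k,D} \to 0$ as $k \to \infty$ for every fixed $D \in \calD$. Letting $\tilde{w}^{n,k,D}$ be a minimizer of the corresponding \eqref{eq:trp}, the trust-region constraint forces $\tilde{w}^{n,k,D} \to w^n$ in $L^1(\Omega)$; the feasibility of $w^n$ for the subproblem gives $\pred^{n,k,D} \ge 0$, which combined with \eqref{eq:cofb} produces a uniform $\BV$-bound on $\{\tilde{w}^{n,k,D}\}_k$. Weak-$^*$ compactness in $\BV(\Omega)$ together with uniqueness of the $L^1$-limit identifies $w^n$ as the weak-$^*$ limit of every subsequence, and lower semicontinuity of $\TV$ yields $\limsup_{k\to\infty}(\TV(w^n) - \TV(\tilde{w}^{n,k,D})) \le 0$; combined with the $L^1$-vanishing of the linear term this forces $\limsup_{k\to\infty}\pred^{n,k,D} \le 0$, which by monotonicity and nonnegativity sharpens to a true limit of zero. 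Once $\calA$ is nonempty, the maximum of $\ared$ over $\calA$ is a fixed strictly positive number, while $\pred^{n,k,D} + L_{\nabla F}\Delta^0 2^{-k} \to 0$, so the inequality in Line \ref{ln:not_pred_zero_or_ared_domination} must eventually fail for every patch still in $\calW$. Since $\calD$ is finite and each patch exits $\calW$ after finitely many increments, the inner loop terminates.

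The main technical obstacle is the $\pred \to 0$ step, because $\pred$ couples an $L^1$-continuous linear part with a $\TV$-term that is only lower semicontinuous along $L^1$-convergent sequences, so pure continuity arguments are unavailable. The uniform $\BV$-bound extracted from $\pred \ge 0$ and \eqref{eq:cofb} is the key device, since it unlocks weak-$^*$ compactness in $\BV(\Omega)$ and lets the lower semicontinuity of $\TV$ close the bound without circularity.
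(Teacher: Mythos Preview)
Your proof is correct. The argument for $\calA \neq \emptyset$ is essentially the paper's: both apply \cref{lem:instationarity} to the constant sequence $w^\ell \equiv w^n$, and the tracing of $D^*$ up to $k_0^*$ you give is a careful expansion of what the paper compresses into one sentence.

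The finite-termination argument, however, takes a genuinely different route. The paper argues by contradiction: if a patch $D_2$ is incremented forever, then Line~\ref{ln:not_pred_zero_or_ared_domination} holding for all $k$ (together with $\calA$ eventually nonempty) forces $\pred^{n,k,D_2}$ to stay bounded \emph{away} from zero; the Lipschitz estimate of \cref{ass:F_regularity}\,\ref{itm:nablaF_Lipschitz} then yields $\ared^{n,k,D_2}/\pred^{n,k,D_2} \to 1$, contradicting that Line~\ref{ln:suff_dec} never fires. You instead establish unconditionally that $\pred^{n,k,D} \to 0$ for every patch via $\BV$-compactness and lower semicontinuity of $\TV$, and conclude that the domination criterion in Line~\ref{ln:not_pred_zero_or_ared_domination} must eventually fail once $\calA$ is populated. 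Your route is slightly longer but does not invoke $L_{\nabla F}$ in the analysis itself (only through the algorithm's stopping rule), which dovetails with the remark the paper places after the theorem about circumventing \cref{ass:F_regularity}\,\ref{itm:nablaF_Lipschitz}. The paper's route is shorter and leans directly on the regularity that is already assumed. One cosmetic point: the maximum of $\ared$ over $\calA$ is not literally ``fixed'' since $\calA$ may grow during the loop, but it is non-decreasing and strictly positive once $\calA \neq \emptyset$, which is all your argument needs.
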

\begin{proof}
We need to prove that eventually $\calW = \emptyset$. We first observe that for every choice of
$k$ in the loop starting in Line \ref{ln:tabulation_loop}, the set $\calW$ contains at most
$|\calD|$ elements so that the loop starting in Line \ref{ln:inner_tabulation_loop} always
terminates finitely. In addition, elements are never removed but only added to the set $\calA$.

We briefly argue that by Lemma 5.2, \cref{alg:slipsub_greedy} Line \ref{ln:suff_dec}
will be executed at least once over the course of the inner loop of Line \ref{ln:tabulation_loop}.
Specifically, applying \cref{lem:instationarity} with the choice $w^\ell \coloneqq w^n$ for all $\ell \in \N$ gives
a tuple $(k_1,D_1)$ such that the acceptance criterion in Line \ref{ln:suff_dec} holds.
Consequently, there is some patch $D$ such that for increasing $k$ Line \ref{ln:increase_k}
is executed and $(k,D) \in \calW$ holds until the acceptance criterion in Line \ref{ln:suff_dec} or 
$\max_{(\tilde{k},\tilde{D}) \in \calA} \ared^{n,\tilde{k},\tilde{D}}
\ge \pred^{n,k,D} + L_{\nabla F} \Delta^{0}
2^{-k}$ holds. In both cases, $\calA$ is not empty after iteration $k$.

We also observe that the $\max$ in Line \ref{ln:not_pred_zero_or_ared_domination}
is strictly greater than zero from this point on because pairs $(k,D)$ are only inserted
into $\calA$ if $\pred^{n,k,D}$ and in turn also $\ared^{n,k,D}$ are strictly positive.

We proceed by contradiction and assume that the algorithm does not terminate finitely so
that there is a patch $D_2$ such that
for all $k \in \N$, the criterion in Line \ref{ln:not_pred_zero_or_ared_domination} is satisfied
while the criterion in Line \ref{ln:suff_dec} is not satisfied.
In combination with $\calA$ eventually being non-empty and the left hand side of
the $\max$ in Line \ref{ln:not_pred_zero_or_ared_domination} being strictly positive,
this means that $\pred^{n,k,D_2}$ is uniformly bounded away from zero for all $k \in \N$.

Clearly, for all feasible points of
$\text{{\ref{eq:trp}}}(w^{n}, \nabla F(w^{n}), D_2, \Delta^{0} 2^{-k})$ and thus the minimizer
$\tilde{w}^{n,k,D_2}$ it holds
\begin{multline*}
J(w^n) - J(\tilde{w}^{n,k,D_2}) = (\nabla F(w^n), w^n - \tilde{w}^{n,k,D_2})_{L^2} + \alpha \TV(w^n) - \alpha \TV(\tilde{w}^{n,k,D_2})\\
+ (\nabla F(\xi^{n,k,D_2}) - \nabla F(w^n), w^n - \tilde{w}^{n,k,D_2})_{L^2}
\end{multline*}
for some $\xi^{n,k,D_2}$ in the line segment between $w^n$ and $\tilde{w}^{n,k,D_2}$.
Moreover,
\[
|(\nabla F(\xi^{n,k,D_2}) - \nabla F(w^n), \tilde{w}^{n,k,D_2} - w^n)_{L^2}|
\le L_{\nabla F} \Delta^0 2^{-k}
\]
holds by means of \cref{ass:F_regularity} \ref{itm:nablaF_Lipschitz}. In turn, we obtain
\begin{gather*}
\frac{\ared^{n,k,D_2}}{\pred^{n,k,D_2}}
= \frac{\pred^{n,k,D_2} + (\nabla F(\xi^{n,k,D_2}) - \nabla F(w^n), w^n - \tilde{w}^{n,k,D_2})_{L^2}}{\pred^{n,k,D_2}}
\to 1 > \sigma
\end{gather*}
so that eventually Line \ref{ln:suff_dec} holds for $D_2$ and some large enough $k$ too,
thereby contradicting that the criterion in Line \ref{ln:suff_dec} is never satisfied
for $D_2$.
\end{proof}
\begin{remark}
A close inspection of the argument of \cref{thm:welldefinedness} shows that it is possible to
avoid \cref{ass:F_regularity} \ref{itm:nablaF_Lipschitz} in \cref{thm:welldefinedness}
and thus in our overall arguments by replacing $L_{\nabla F}$ in \cref{alg:slipsub_greedy}
by $2c(b)$ with $c(b)$ from \eqref{eq:cofb} if a bound $b > \TV(w^{n,k,D})$ can be established
uniformly for all iterates $w^{n,k,D}$. Such bounds exist due to the properties
of $F$ and the descent properties of the algorithm. If 
\[ c(\infty) = \sup\{ \|\nabla F(w)\|_{L^\infty}\,|\, w(x) \in W \text{ a.e.} \} < \infty \]
holds, this is of course sufficient too.
\end{remark}

\begin{lemma}\label{lem:nonincreasing_objectives_weakstar_accumulation_point}
Let $\{w^n\}_{n}$ be the sequence of iterates produced by
\Cref{alg:slipsub_greedy}. Then the sequence of objective values
$\{J(w^n)\}_n$ is monotonously
non-increasing and convergent.
The sequence $\{w^n\}_{n}$ admits a feasible weak-$^*$ accumulation point
in $\BVW(\Omega)$.
\end{lemma}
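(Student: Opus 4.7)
The proof splits into three pieces: (i) monotone non-increase of $\{J(w^n)\}_n$, (ii) convergence from boundedness below, and (iii) existence of a weak-$^*$ accumulation point by a standard BV compactness argument together with weak-$^*$ closedness of $\BVW(\Omega)$.

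\textbf{Monotonicity.} I would first observe that if \cref{alg:slipsub_greedy} returns on the ``$\calA = \emptyset$'' branch, the sequence is finite and the claim is trivial, so assume the algorithm produces a new iterate. By \cref{thm:welldefinedness}, the inner loop at Line~\ref{ln:tabulation_loop} terminates finitely and yields $\calA \neq \emptyset$ whenever $w^n$ is not stationary. The crucial point is that the first pass through the greedy loop always accepts: with $\bar w^n = w^n$ initially, the feasibility constraint in \eqref{eq:trp} forces $\tilde w^{n,\bar k,\bar D}(x) = w^n(x)$ for $x \in \Omega \setminus \bar D$, so the glued function $\tilde w^n$ simply equals $\tilde w^{n,\bar k,\bar D}$; hence $J(\tilde w^n) = J(w^n) - \ared^{n,\bar k,\bar D}$, and $\ared^{n,\bar k,\bar D} \ge \sigma \pred^{n,\bar k,\bar D} > 0$ by the acceptance criterion in Line~\ref{ln:suff_dec}. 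Subsequent greedy updates are only applied when Line~\ref{ln:apply_reduction} holds, i.e., strictly decrease $\bar j^0$. Therefore $J(w^{n+1}) = \bar j^0 \le J(w^n)$.

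\textbf{Convergence.} Since $F$ is bounded below by assumption and $\TV \ge 0$, $J$ is bounded below on $\BVW(\Omega)$. A monotonically non-increasing sequence of real numbers that is bounded below converges, giving the claim for $\{J(w^n)\}_n$.

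\textbf{Weak-$^*$ accumulation point.} From $J(w^n) \le J(w^0)$ and $F$ bounded below, I would deduce
\[
\alpha\,\TV(w^n) \;\le\; J(w^0) - \inf_{v \in \BVW(\Omega)} F(v)\quad\text{for all } n,
\]
so $\TV(w^n)$ is uniformly bounded. Because every $w^n \in \BVW(\Omega)$ takes values in the finite set $W \subset \Z$, $\|w^n\|_{L^\infty} \le \max_{i\in[M]}|w_i|$ and hence $\|w^n\|_{L^1}$ is uniformly bounded by $|\Omega|\max_i|w_i|$. Thus $\{w^n\}_n$ is bounded in $\BV(\Omega)$, and the standard BV compactness theorem (e.g.\ \cite[Corollary~3.49]{ambrosio2000functions}) delivers a subsequence $w^{n_k} \weakstarto w^\ast$ in $\BV(\Omega)$. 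Since $\BVW(\Omega)$ is weak-$^*$ sequentially closed in $\BV(\Omega)$ (as recalled in \cref{sec:notation}), the limit satisfies $w^\ast \in \BVW(\Omega)$, which is the required feasible accumulation point. No step here is delicate; the only mild subtlety is making sure the first greedy acceptance is argued carefully from the equality constraint in \eqref{eq:trp}, and this is what secures strict descent per outer iteration.
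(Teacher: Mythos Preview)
Your proof is correct and follows the same approach as the paper's, just with considerably more detail; the paper dispatches monotonicity in a single phrase (``by construction'') and the compactness step in one line, whereas you carefully unpack why the first greedy pass necessarily accepts and why the $\BV$-bound holds. One minor remark: your appeal to \cref{thm:welldefinedness} is unnecessary (and technically imports the extra hypothesis $\nabla F(w^n)\in C(\bar\Omega)$ that the lemma does not assume), since monotonicity only requires that \emph{if} the greedy loop is entered with $\calA\neq\emptyset$ then decrease occurs---which you establish directly from the structure of \eqref{eq:trp} and Line~\ref{ln:apply_reduction}.
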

\begin{proof}
    The sequence of objective values is monotonically nonincreasing by
construction and, because $F$ and $\TV$ are both bounded below,
also convergent. Because $\{\TV(w^n)\}_n$ is uniformly bounded
above by $J(w^0) < \infty$, $\{w^n\}_n$ admits a weak-$^*$
accumulation point $\bar{w}$, which is feasible, that
is in $\BVW(\Omega)$.
\end{proof}
If $\bar{w}$ is also a strict accumulation point, that is $\TV(w^{n_k}) \to \TV(\bar{w})$
in addition to $w^{n_k} \weakstarto \bar{w}$ in $\BV(\Omega)$ holds for a subsequence $\{w^{n_k}\}_k$,
then $J(w^{n_k}) \to J(\bar{w})$ under a continuity assumption on $F$.
Consequently, we desire that every weak-$^*$ accumulation point is a strict accumulation point.
\begin{lemma}\label{lem:weakstar_accumulation_points_are_strict}
Let \cref{ass:F_regularity,ass:patches} hold and $\{w^n\}_n$ be the sequence of iterates produced by
\cref{alg:slipsub_greedy}. Every weak-$^*$ accumulation point $\bar{w}$ of $\{w^n\}_n$ is a
strict accumulation point and $J(w^n) \to J(\bar{w})$.
\end{lemma}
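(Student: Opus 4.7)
The plan is to argue by contradiction: if some weak-$^*$ accumulation point $\bar w$ is not strict, I construct trial steps on a single patch that force uniform descent $J(w^{n_k}) - J(w^{n_k+1}) \ge \varepsilon > 0$ for infinitely many $k$, contradicting the monotone convergence of $\{J(w^n)\}$ from \cref{lem:nonincreasing_objectives_weakstar_accumulation_point}. Fix a subsequence $w^{n_k} \weakstarto \bar w$. Continuity of $F$ (a consequence of \cref{ass:F_regularity} \ref{itm:F_Frechet}) together with $L^1$-convergence yields $F(w^{n_k}) \to F(\bar w)$; combined with $J(w^n) \to J^*$, the sequence $\TV(w^{n_k}) = \alpha^{-1}(J(w^{n_k}) - F(w^{n_k}))$ converges to some $L$, and lsc of $\TV$ forces $L \ge \TV(\bar w)$. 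I assume $\delta := L - \TV(\bar w) > 0$ and aim to reach a contradiction.

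To locate where the excess variation resides, I pass to a further subsequence along which $|Dw^{n_k}|$ converges weakly-$^*$ to a positive Radon measure $\mu$. A Reshetnyak-type lsc argument tested against nonnegative $\phi \in C_c(\Omega)$ gives $|D\bar w| \le \mu$ on $\Omega$, so $\nu := \mu - |D\bar w|$ is a positive measure on $\Omega$ with $\nu(\Omega) \ge \delta$ (pending the boundary caveat below). Since $\calD$ is a finite open cover of $\Omega$, some patch $D^* \in \calD$ satisfies $\nu(D^*) > 0$, and $\nu(D^*_s) \nearrow \nu(D^*)$ as $s \searrow 0$ lets me select $s_0 > 0$ that lies in the co-null set of admissible shrinkage parameters from the proof of \cref{lem:competitor} and satisfies $\nu(D^*_{s_0}) \ge 2c > 0$. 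The Portmanteau theorem on the open set $D^*_{s_0}$ then yields $\liminf_k \TV_{D^*_{s_0}}(w^{n_k}) \ge \TV_{D^*_{s_0}}(\bar w) + 2c$. I apply \cref{lem:competitor} with $w = \hat w = \bar w$ (the support hypothesis is vacuous) and $D = D^*$ to obtain, along a sub-sub-sequence with $b^{n_k} \to 0$, competitors $\hat w^{n_k}$ equal to $w^{n_k}$ outside $D^*_{s_0}$ and to $\bar w$ inside.

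The predicted reduction of $\hat w^{n_k}$ as a trial for $\text{{\ref{eq:trp}}}(w^{n_k}, \nabla F(w^{n_k}), D^*, \Delta^0 2^{-k})$ is
\[
(\nabla F(w^{n_k}), w^{n_k} - \hat w^{n_k})_{L^2(D^*_{s_0})} + \alpha\bigl[\TV_{D^*_{s_0}}(w^{n_k}) - \TV_{D^*_{s_0}}(\bar w) - b^{n_k}\bigr],
\]
which eventually exceeds $\alpha c$, since the $L^2$-term is dominated by $\|\nabla F(w^{n_k})\|_{L^\infty} \|\bar w - w^{n_k}\|_{L^1(D^*_{s_0})} \to 0$. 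Because $\|\hat w^{n_k} - w^{n_k}\|_{L^1} \le \|\bar w - w^{n_k}\|_{L^1(D^*_{s_0})} \to 0$, $\hat w^{n_k}$ is eventually feasible at any fixed trust-region index $\tilde k$, so subproblem optimality bounds $\pred^{n_k,k,D^*}$ below by the competitor's predicted reduction. Choosing $\tilde k$ with $(1-\sigma)\alpha c > L_{\nabla F}\Delta^0 2^{-\tilde k}$, \cref{ass:F_regularity} \ref{itm:nablaF_Lipschitz} guarantees $\ared \ge \sigma \pred$ at index $\tilde k$. A case analysis of the inner loop on patch $D^*$ -- either acceptance occurs at some $k \le \tilde k$ (giving $\ared \ge \sigma\pred \ge \sigma \alpha c/2$), or the dominance test of Line \ref{ln:not_pred_zero_or_ared_domination} fails at some $k$, forcing $\max_{\calA} \ared \ge \pred^{n_k,k,D^*} \ge \alpha c/2$ -- shows $\max_{\calA} \ared \ge \sigma\alpha c/2$ at loop termination. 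The greedy outer update then delivers $J(w^{n_k}) - J(w^{n_k+1}) \ge \sigma\alpha c/2 > 0$ for infinitely many $k$, the desired contradiction. With strictness established, $J(w^{n_k}) \to J(\bar w)$ follows from continuity of $F$, and the full-sequence convergence of $\{J(w^n)\}$ from \cref{lem:nonincreasing_objectives_weakstar_accumulation_point} upgrades this to $J(w^n) \to J(\bar w)$.

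The hard part will be the boundary caveat: in principle, $|Dw^{n_k}|$ can concentrate near points of $\partial \Omega$ in the weak-$^*$ limit, placing all $\delta$ units of excess mass on $\partial\Omega$ and leaving $\nu(\Omega) = 0$, which defeats the pigeonhole. I anticipate resolving this by either leveraging the monotone decrease of $\{J(w^n)\}$ and continuity of $F$ to rule out such interface drift along iterates of \cref{alg:slipsub_greedy}, or by slightly strengthening \cref{lem:competitor} via a shrinkage that measures distance only to $\partial D \setminus \partial \Omega$, so that competitors on boundary-touching patches can absorb interfaces arbitrarily close to $\partial \Omega$.
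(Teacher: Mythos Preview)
Your overall strategy—assume non-strictness, build competitors on a single patch that force uniform descent, and contradict the convergence of $\{J(w^n)\}$ from \cref{lem:nonincreasing_objectives_weakstar_accumulation_point}—matches the paper's. Your handling of the competitor via \cref{lem:competitor} (with $w=\hat w=\bar w$) and your case analysis of the tabulation loop (acceptance on $D^*$ at some $k\le\tilde k$ versus failure of the dominance test in Line~\ref{ln:not_pred_zero_or_ared_domination}, either way forcing $\max_{\calA}\ared\ge\sigma\alpha c/2$) are sound and, if anything, more explicit than the paper's presentation of the analogous step.

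The genuine gap is exactly the boundary caveat you flag, and neither proposed fix is convincing as written. Passing to a weak-$^*$ limit $\mu$ of $|Dw^{n_k}|$ as Radon measures on the \emph{open} set $\Omega$ only gives $\mu(\Omega)\le L$ by Portmanteau, so all $\delta$ units of excess can escape to $\partial\Omega$ and leave $\nu\equiv 0$, defeating your pigeonhole over $\calD$. Your first fix (monotone descent obstructs drift) has no evident mechanism—nothing in the algorithm penalizes interfaces that wander toward $\partial\Omega$. Your second fix (shrink $D$ only away from $\partial D\setminus\partial\Omega$) would require a nontrivial extension of \cref{lem:competitor}: the $b^\ell$ term there is controlled via the coarea slice $\partial D_s$, which would now intersect $\partial\Omega$, and the glued function need no longer lie in $\BVW(\Omega)$ without extra trace control.

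The paper sidesteps this by never forming the limit measure. It applies the Vitali--Besicovitch covering theorem (implicitly to a single Radon measure dominating $|D\bar w|$ and all the countably many $|Dw^{n_\ell}|$) to extract a countable disjoint family of closed balls $\{\overline{B_m}\}$, each compactly contained in some patch $D\in\calD$ and with $\partial B_m$ meeting none of the relevant reduced boundaries, such that $|Dw^{n_\ell}|\bigl(\Omega\setminus\bigcup_m\overline{B_m}\bigr)=0$ for every $\ell$. This yields the exact decomposition $\TV(w^{n_\ell})=\sum_m\TV_{B_m}(w^{n_\ell})$ for each fixed $\ell$, with no leakage to $\partial\Omega$, and one then argues that some ball $B_m$ carries $\lim_\ell\TV_{B_m}(w^{n_\ell})>\TV_{B_m}(\bar w)$. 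The competitor is built around this single ball (taking $K$ a slight enlargement of $\overline{B_m}$ in \cref{lem:competitor}) rather than around an entire shrunk patch, and from there the contradiction proceeds as you have it. The essential point is that the localization acts on each $|Dw^{n_\ell}|$ individually; the weak-$^*$ limit of these measures is never invoked.
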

\begin{proof}
Let $\bar{w}$ be a weak-$^*$ accumulation point in $\BV(\Omega)$ with approximating subsequence
$\{w^{n_\ell}\}_{\ell}$. Then $w^{n_\ell} \to \bar{w}$ in $L^1(\Omega)$ and in turn
$F(w^{n_\ell}) \to F(\bar{w})$.

\textbf{Outline.} By way of contradiction,
we assume that $\bar{w}$ is not strict, that is, after
a possible restriction to a sub-subsequence,
\begin{gather}\label{eq:nonstrictwstarconv_inequality}
\lim_{\ell\to\infty} \TV(w^{n_\ell}) > \TV(\bar{w}) + \varepsilon
\end{gather}
for some $\varepsilon > 0$.
We are going to localize this inequality, that is we show that there exists an
open ball $B$ such that $\overline{B} \subset D$ for some $D \in \calD$, and $\varepsilon_B > 0$ such that
\begin{gather}\label{eq:nonstrictwstarconv_patch_inequality}
\lim_{\ell\to\infty} \TVB(w^{n_\ell}) > \TVB(\bar{w}) + \varepsilon_B
\end{gather}
and whose boundary does not interfere with the interfaces of the level sets
of the $w^{n_\ell}$ and $\bar{w}$. Specifically, we require
\begin{gather}\label{eq:no_interface_contribution}
\Ha^{d-1}\left(\partial B \cap \bigcup_{i = 1}^M \partial^* E_i^{n_\ell}\right) = 0
\text{ for all } \ell \in \N
\quad\text{ and }\quad
\Ha^{d-1}\left(\partial B \cap \bigcup_{i = 1}^M \partial^* E_i\right) = 0,
\end{gather}
where $\{E_1,\ldots,E_M\}$ is a Caccioppoli partition of $\Omega$ such that
$\bar{w} = \sum_{i=1}^M w_i\chi_{E_i}$ and similarly for $w^{n_\ell}$, see \cite[Lemma 2.1]{manns2023on}.
With this localization, we will then be able to construct competitors that are feasible
for trust-region subproblems \eqref{eq:trp} on the patch $D$ and guarantee a
predicted and actual reduction that are bounded away from zero. In turn, we will obtain
$J(w^n) \to -\infty$, which gives the desired contradiction.

\textbf{Localization of \eqref{eq:nonstrictwstarconv_inequality} to
\eqref{eq:nonstrictwstarconv_patch_inequality}.}
In order to obtain the existence of $B$ and $D$, we employ a covering argument and consider
the following set of closed balls:
\begin{gather}\label{eq:fine_cover}
\mathcal{F} = \left\{ \overline{B_s(x)} \,\middle|\,
\begin{aligned}
x \in \Omega, 0 < s, D \in \calD, \overline{B_s(x)} \subset D,\\
\eqref{eq:no_interface_contribution} \text{ holds for the choice } \overline{B} = \overline{B_s(x)}
\end{aligned}
\right\}.
\end{gather}
Clearly, \eqref{eq:fine_cover} would be a fine cover if \eqref{eq:no_interface_contribution} is not required
for its elements. Because there are only countably many iterates $w^n$, there are only countably many
Caccioppoli partitions $\{E_1^{n_\ell},\ldots,E_M^{n_\ell}\}$. Consequently, we can always perturb $s$ slightly
(arbitrarily small if needed) to ensure both identities in \eqref{eq:no_interface_contribution} hold for a closed
ball $\overline{B} = \overline{B_r(x)}$, see also \cite[Proposition 2.16]{maggi2012sets}. This means that for all
$r_0 > 0$, we find some $r \in (0,r_0)$ such that
$\overline{B_r(x)} \subset \overline{B_{r_0}(x)}$ and $\overline{B_r(x)} \subset D$.

The Vitali--Besicovitch covering theorem implies that there is a countable and pairwise disjoint subset
$\tilde{\mathcal{F}} \subset \mathcal{F}$ such that
$|Dw| = \sum_{i=1}^{M-1}\sum_{j=i+1}^M|w_i - w_{j}|\Ha^{d-1} \mres (\partial^* E_i \cap \partial^* E_j \cap \Omega)$
satisfies
$|Dw|(\Omega \setminus \bigcup_{\overline{B} \in \tilde{\mathcal{F}}} \overline{B}) = 0$
and $|Dw^{n_\ell}|(\Omega \setminus \bigcup_{\overline{B} \in \tilde{\mathcal{F}}} \overline{B}) = 0$
for all $\ell \in \N$.

In combination with the fact that \eqref{eq:no_interface_contribution} holds for all balls in
$\mathcal{F}$ and thus also $\tilde{\mathcal{F}}$, we obtain \eqref{eq:nonstrictwstarconv_patch_inequality}
for some open ball $B$ with $\overline{B} \subset \Omega$ and $\varepsilon_B > 0$.

\textbf{Competitor construction.}
We seek competitors $\hat{w}^{n_\ell}$, which eventually become feasible for the problem
$\text{{\ref{eq:trp}}}(w^{n_\ell}, \nabla F(w^{n_\ell}), D, \Delta^0 2^{-k})$ for large enough $\ell$
and suitable corresponding $k$. Let $r > 0$ and $\bar{x} \in D$ satisfy $B = B_r(\bar{x})$ for the above-asserted
$B$ and $D$. Moreover, let $r_1 > r$ be small enough such that $\overline{B_{r_1}(\bar{x})} \subset D$ holds.
It is possible to find such $r_1$ due to the strictly positive distance of $B$ to the boundary $\partial D$
(note that $D$ is open and $\overline{B}$ is closed). We apply \cref{lem:competitor} with the choices
$\hat{w} = \bar{w}$ and $K = \overline{B_{r_1}(\bar{x})}$. We obtain that there exist
$D_s \subset \subset D$ with $\overline{B_{r_1}(\bar{x})} \subset \subset D_s$ and
functions $\{\hat{w}^{n_\ell}\}_\ell$ with $\hat{w}^{n_\ell}(x) = w^{n_\ell}(x)$ for
$x \in \Omega\setminus D_s$ and $\hat{w}^{n_\ell}(x) = \bar{w}(x)$ for $x \in D_s$ such that
\begin{align*}
\hat{w}^{n_\ell} &\weakstarto \bar{w} \text{ in } \BVW(\Omega) \text{ for } \ell \to \infty,\\
\TV(\hat{w}^{n_\ell}) &= {\TV}_{\Omega\setminus \overline{D_s}}(w^{n_\ell})
+ {\TV}_{D_s}(\bar{w}) + b^\ell\text{ with } b^\ell \ge 0 \text{ and } \liminf_{\ell\to\infty} b^\ell = 0,\\
\TV(\bar{w}) &= {\TV}_{\Omega\setminus \overline{D_s}}(\bar{w}) + {\TV}_{D_s}(\bar{w}).
\end{align*}
Using \eqref{eq:nonstrictwstarconv_patch_inequality} and \eqref{eq:no_interface_contribution}, we obtain
\begin{align*}
\TV(\hat{w}^{n_\ell}) 
&= {\TV}_{\Omega\setminus \overline{D_s}}(w^{n_\ell}) + {\TV}_{D_s}(\bar{w}) + b^\ell\\
&= {\TV}_{\Omega\setminus \overline{D_s}}(w^{n_\ell}) + {\TV}_{D_s\setminus B}(\bar{w}) + {\TV}_{B}(\bar{w}) + b^\ell\\
&< {\TV}_{\Omega\setminus \overline{D_s}}(w^{n_\ell}) + \lim_{\ell \to \infty} {\TV}_{D_s\setminus B}(w^{n_\ell})
+ {\TV}_{B}(w^{n_\ell}) + b^\ell - \varepsilon_B.
\end{align*}
Taking the $\liminf$ over $\ell$ on both sides and using \eqref{eq:no_interface_contribution} again,
we obtain
\begin{gather}\label{eq:competitor_improvement}
\TV(\bar{w}) \le \liminf_{\ell \to \infty} \TV(\hat{w}^{n_\ell})  \le \lim_{\ell\to\infty} \TV(w^{n_\ell}) - \varepsilon_B.
\end{gather}

\textbf{Contradiction $J(w^n) \to -\infty$.}
We follow the arguments of the proof of Theorem 6.4 in \cite{manns2023on}
(namely Outcome 3, part 2) in order to show that the competitors
constructed above eventually
become feasible and enforce a reduction of the objective
that is bounded strictly away from zero infinitely often, which in turn
contradicts that the objective is bounded below.

Let $\delta \coloneqq \frac{\varepsilon_B}{2}$. Because of \cref{ass:F_regularity}
and $\hat{w}^{n_\ell} \to \bar{w}$ in $L^2(\Omega)$ as well as $w^{n_\ell} \to \bar{w}$ in $L^2(\Omega)$,
we obtain that there exist some large enough $k_0 \in \N$ and $\ell_0 \in \N$ and such that 
by virtue of \eqref{eq:competitor_improvement} for infinitely many $\ell \ge \ell_0$ and all $\tilde{w} \in \BVW(\Omega)$
with $\|\tilde{w} - w^{n_\ell}\|_{L^1} \le \Delta^{0}2^{-k_0}$ it holds that
\begin{align}
|F(w^{n_\ell}) - F(\tilde{w})|
&\le \frac{1-\sigma}{3 - \sigma}\alpha\delta,\label{eq:F_lk_approx}\\
|(\nabla F(w^{n_\ell}), w^{n_\ell} - \tilde{w})_{L^2}|
&\le \frac{1-\sigma}{3 - \sigma}\alpha\delta, \label{eq:nablaF_lk_approx} \\
\|\hat{w}^{n_\ell} - w^{n_\ell}\|_{L^1} &\le \Delta^{0}2^{-k_0},\text{ and}\label{eq:w_lk_feasible}\\
\alpha \TV(w^{n_\ell}) - \alpha \TV(\hat{w}^{n_\ell}) &\ge \alpha\delta, \label{eq:tv_lk_diff}
\end{align}
where \eqref{eq:w_lk_feasible} gives that $\hat{w}^{n_\ell}$ is feasible for
$\text{{\ref{eq:trp}}}(w^{n_\ell}, \nabla F(w^{n_\ell}), D, \Delta^{0} 2^{-k_0})$.

Because $\pred{}^{n_\ell,k_0,D}$ is computed 
in \cref{alg:slipsub_greedy} Line \ref{ln:parallel_pred} as the negation of the objective value of a
minimizer of $\text{{\ref{eq:trp}}}(w^{n_\ell}, \nabla F(w^{n_\ell}), D, \Delta^{0} 2^{-k_0})$,
the feasibility \eqref{eq:w_lk_feasible} 
and the objective term estimates \eqref{eq:nablaF_lk_approx} and \eqref{eq:tv_lk_diff}
yield
\begin{align}
\pred{}^{n_\ell,k,D}
&\ge (\nabla F(w^{n_\ell}), w^{n_\ell} - \hat{w}^{n_\ell})_{L^2}
     + \alpha \TV(w^{n_\ell}) - \alpha \TV(\hat{w}^{n_\ell})
\ge - \frac{1-\sigma}{3 - \sigma}\alpha\delta + \alpha\delta.
\label{eq:pred_lk_lower_bound}
\end{align}
Moreover, \eqref{eq:F_lk_approx} and \eqref{eq:nablaF_lk_approx} give
\[
\ared{}^{n_\ell,k_0,D} \ge \pred{}^{n_\ell,k_0,D} - 2 \frac{1 - \sigma}{3 - \sigma}\alpha\delta,
\]
where the right-hand side is strictly positive because of
\eqref{eq:pred_lk_lower_bound} and $1 - \tfrac{1 - \sigma}{3 - \sigma}
> 2\tfrac{1 - \sigma}{3 - \sigma}$. Consequently, for suitable $\ell$ it holds that
\[
\frac{\ared{}^{n_\ell,k_0,D}}{\pred{}^{n_\ell,k_0,D}}
\ge
\frac{\pred{}^{n_\ell,k_0,D}
	- 2\frac{1 - \sigma}{3 - \sigma} \alpha\delta}{\pred{}^{n_\ell,k_0,D}}
\ge
\frac{1 - 3\frac{1 - \sigma}{3 - \sigma}}{1 - \frac{1 - \sigma}{3 - \sigma}}
= \sigma,
\]
where the second inequality follows from the monotonocity of the function
$p \mapsto p^{-1}(p - 2\tfrac{1 - \sigma}{3 - \sigma})$.

Consequently, for infinitely many $\ell \ge \ell_0$, the loop starting at
\cref{alg:slipsub_greedy} Line \ref{ln:tabulation_loop} terminates after finitely
many iterations with $(\bar{k},\bar{D}) \in \argmax\{ \ared^{n_\ell,\bar{k},\bar{D}}\,|\, (\bar{k},\bar{D}) \in \calA\}$
such that
\[ {\ared}^{n_\ell,\bar{k},\bar{D}} \ge {\ared}^{n_\ell,k_0,D} \ge \sigma {\pred}^{n_\ell,k_0,D}
   \ge 2\frac{1 - \sigma}{3 - \sigma}\sigma\alpha\delta
\]
Since the change induced by the maximizer $(\bar{k},\bar{D})$ will definitely be applied in
Line \ref{ln:apply_reduction} and all further reductions decrease the objective even more, we obtain
\begin{align*}
F(w^{n_\ell+1}) + \alpha \TV(w^{n_\ell+1})
- F(w^{n_\ell}) - \alpha \TV(w^{n_\ell})
&\ge {\ared}^{n_\ell,\bar{k},\bar{D}}  \ge \sigma 2 \alpha\frac{1-\sigma}{3-\sigma}\frac{\varepsilon_B}{2} > 0.
\end{align*}
Because the sequence $\{J(w^n)\}_n$ decreases monotonically, we obtain the contradiction
$J(w^{n}) \to -\infty$.

\textbf{Objective value convergence.} $F$ is continuous and $w^{n_\ell} \weakstarto \bar{w}$
in $\BVW(\Omega)$ implies $w^{n_\ell} \to \bar{w}$ in $L^1(\Omega)$ and
in turn $w^{n_\ell} \to \bar{w}$ in $L^2(\Omega)$ because $\BVW(\Omega)$ is bounded in $L^\infty(\Omega)$.
In combination with
the convergence of ${J(w^n)}_n$ and the strict convergence of $\{\TV(w^{n_\ell})\}_\ell\}$,
we obtain $J(w^{n_\ell}) \to J(\bar{w}) = \inf_{n \in \N} J(w^n)$.
\end{proof}

\begin{theorem}\label{thm:convergence}
Let \cref{ass:calD_open_cover,ass:patches,ass:F_regularity} hold.
Let $\{w^n\}_{n}$ be the sequence of iterates produced by \Cref{alg:slipsub_greedy}.
Then one of the following mutually exclusive outcomes holds:
\begin{enumerate}
\item\label{itm:outcome_outer_finite_inner_finite} The sequence $\{w^n\}_n$ is finite and the final element $w^N$
for some $N \in \N$ satisfies the following. For all $D \in \calD$
there exists $k \in \N$ such that $w^N$ solves
$\textrm{\emph{\ref{eq:trp}}}(w^{N}, \nabla F(w^{N}), D, \Delta^0 2^{-k})$.
In particular, $w^N$ is stationary if $\nabla F(w^N) \in C(\bar{\Omega})$.
\item\label{itm:outcome_outer_finite_inner_infinite} The sequence $\{w^n\}_n$ is finite and the final element $w^N$
for some $N \in \N$ satisfies the following. The loop over $k$ that
begins in Line \ref{ln:tabulation_loop} does not terminate finitely. In particular, $w^N$ is stationary if $\nabla F(w^N) \in C(\bar{\Omega})$.
\item\label{itm:outcome_outer_infinite} The sequence $\{w^n\}_n$ has a weak-$^*$ accumulation point
in $\BV(\Omega)$. All weak-$^*$ accumulation points are in
$\BVW(\Omega)$. If a weak-$^*$ accumulation point $\bar{w}$ satisfies
$\nabla F(\bar{w}) \in C(\bar{\Omega})$, it is stationary.
\end{enumerate}
\end{theorem}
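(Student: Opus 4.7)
The plan is to prove the three outcomes separately, reusing the technical machinery built up earlier in \cref{sec:convergence}. The dichotomy is dictated by whether the outer loop terminates and, in the terminating case, whether the inner loop terminates at the final iterate.

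For the first outcome, I would unpack the termination condition $\calA = \emptyset$. Because $\max \emptyset = -\infty$ and $\calA$ stays empty by assumption, the check in Line~\ref{ln:not_pred_zero_or_ared_domination} of \cref{alg:slipsub_greedy} simplifies to $\pred^{N,k,D} > 0$, so $(k,D)$ is replaced by $(k+1,D)$ as long as the predicted reduction is positive; hence the inner loop can only finish on a given patch $D$ when $\pred^{N,k(D),D} = 0$ for some finite $k(D)$. This precisely says that $w^N$ solves $\textrm{\emph{\ref{eq:trp}}}(w^N, \nabla F(w^N), D, \Delta^0 2^{-k(D)})$. Because the linearized TRP objective shares its first variation at $w^N$ with $F + \alpha \TV$, the local-variation argument underlying \cref{prp:first-order-opt} applies verbatim with test fields $\phi \in C_c^\infty(D,\R^d)$ (whose induced local variations preserve the TRP constraints for small $t$), yielding patch-stationarity on every $D \in \calD$; global stationarity of $w^N$ then follows from \cref{thm:stationarity_localization}. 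The second outcome is immediate from the contrapositive of \cref{thm:welldefinedness}.

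For the third outcome, the existence and feasibility of a weak-$^*$ accumulation point are given by \cref{lem:nonincreasing_objectives_weakstar_accumulation_point}. The substantive task is stationarity, which I would handle by contradiction. If an accumulation point $\bar w$ with $\nabla F(\bar w) \in C(\bar\Omega)$ were not stationary, combining \cref{lem:weakstar_accumulation_points_are_strict} (which upgrades the approximating subsequence to strict convergence) with \cref{lem:instationarity} yields a patch $D \in \calD$, indices $k_0, \ell_0 \in \N$, and $\varepsilon > 0$ such that $\ared^{n_\ell,k_0,D} \ge \sigma \pred^{n_\ell,k_0,D} > \sigma \varepsilon$ holds for infinitely many $\ell \ge \ell_0$.

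The main obstacle is to promote this patch-wise bound into a uniform per-iteration decrease of $J(w^n)$, because the inner loop at step $n_\ell$ may discard $(k_0, D)$ before processing it via the domination branch in Line~\ref{ln:not_pred_zero_or_ared_domination}. I would split into two subcases: either the inner loop actually reaches $(k_0, D)$, in which case it enters $\calA$ with $\ared \ge \sigma \varepsilon$; or it is discarded at some $k'' \le k_0 - 1$, which then forces the domination branch and thus $\max_{\calA} \ared \ge L_{\nabla F}\Delta^0 2^{-(k_0 - 1)}$. Here the monotone nesting of the TRP feasible sets in the trust-region radius, together with $w^{n_\ell}$ being always feasible, implies that $\pred^{n_\ell,k'',D} = 0$ would propagate to $\pred^{n_\ell,k_0,D} = 0$, ruling out the zero-predicted-reduction route for discarding $(k'', D)$. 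In either subcase $\max_{\calA} \ared$ is bounded below by a constant $c > 0$ independent of $\ell$, and since the greedy update in Line~\ref{ln:apply_reduction} is initialized with the largest stored ared and only further decreases $J$, we obtain $J(w^{n_\ell}) - J(w^{n_\ell + 1}) \ge c$ for infinitely many $\ell$, contradicting the convergence of $\{J(w^n)\}_n$.
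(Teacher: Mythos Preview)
Your approach mirrors the paper's: both reduce to Outcome~\ref{itm:outcome_outer_infinite}, invoke \cref{lem:nonincreasing_objectives_weakstar_accumulation_point} and \cref{lem:weakstar_accumulation_points_are_strict} to upgrade to strict convergence, apply \cref{lem:instationarity}, and derive the contradiction $J(w^n)\to -\infty$ from a uniform per-iteration decrease. Your explicit treatment of Outcomes~\ref{itm:outcome_outer_finite_inner_finite} and~\ref{itm:outcome_outer_finite_inner_infinite} is a welcome addition (the paper's proof simply assumes these do not hold), and you are right to flag the inner-loop termination issue---the paper just asserts $\max_{(\bar k,\bar D)\in\calA}\ared^{n_\ell,\bar k,\bar D}\ge \ared^{n_\ell,k_0,D}$ without arguing why $(k_0,D)$ is ever processed.

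Your case split for that issue, however, is not exhaustive. You cover ``the inner loop reaches $(k_0,D)$'' and ``$(k'',D)$ is discarded via the domination branch at $k''<k_0$'', but a third possibility remains: $(k'',D)$ is \emph{accepted} into $\calA$ via Line~\ref{ln:suff_dec} at some $k''<k_0$. In that case processing of $D$ halts before $k_0$, yet the domination branch is never triggered, so your bound $\max_{\calA}\ared\ge L_{\nabla F}\Delta^0 2^{-(k_0-1)}$ is not available. The repair uses the very monotonicity you already invoke: nesting of the \eqref{eq:trp} feasible sets in the trust-region radius gives $\pred^{n_\ell,k'',D}\ge\pred^{n_\ell,k_0,D}>\varepsilon$, and acceptance at $k''$ then yields $\ared^{n_\ell,k'',D}\ge\sigma\varepsilon$, so $\max_{\calA}\ared\ge\sigma\varepsilon$ in this case as well. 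With this third case added, your argument is complete and in fact more careful than the paper's.
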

\begin{proof}
We follow the basic proof strategy of Theorem 4.23 in \cite{leyffer2022sequential}
and Theorem 6.4 in \cite{manns2023on} and extend it so that we can perform
a localization to patches. We assume that Outcomes
\ref{itm:outcome_outer_finite_inner_finite} and
\ref{itm:outcome_outer_finite_inner_infinite} do not hold and prove that
Outcome \ref{itm:outcome_outer_infinite} must hold in this case.

\Cref{lem:nonincreasing_objectives_weakstar_accumulation_point} gives the existence of a weak-$^*$
accumulation point $\bar{w}$ with approximating subsequence of iterates $\{w^{n_\ell}\}_\ell$. Because
weak-$^*$ convergence implies pointwise a.e.\ convergence for a subsequence, we have
$\bar{w} \in \BVW(\Omega)$. Now let $\nabla F(\bar{w}) \in C(\bar{\Omega})$.
\Cref{lem:weakstar_accumulation_points_are_strict} gives
$\TV(w^{n_\ell}) \to \TV(\bar{w})$ and $F(w^{n_\ell}) \to F(\bar{w})$.
We proceed by way of contradiction and assume that $\bar{w}$ is not stationary.

We apply \cref{lem:instationarity} and obtain that there are $D \in \calD$, $k_0 \in \N$, $\ell_0 \in \N$,
and $\varepsilon > 0$ such that
\[ {\ared}^{n_\ell,k_0,D} \ge \sigma {\pred}^{n_\ell,k_0,D}
\enskip\text{and}\enskip
{\pred}^{n_\ell,k_0,D} > \varepsilon. \]
Consequently, for infinitely many $\ell \ge \ell_0$, the loop starting at
\cref{alg:slipsub_greedy} Line \ref{ln:tabulation_loop} terminates after finitely
many iterations with $(\bar{k},\bar{D}) \in \argmax\{ \ared^{n_\ell,\bar{k},\bar{D}}\,|\, (\bar{k},\bar{D}) \in \calA\}$
such that
\[ {\ared}^{n_\ell,\bar{k},\bar{D}} \ge {\ared}^{n_\ell,k_0,D} \ge \sigma {\pred}^{n_\ell,k_0,D}
\ge \sigma \varepsilon.
\]
Since the change induced by the maximizer $(\bar{k},\bar{D})$ will definitely be applied in
Line \ref{ln:apply_reduction} and all further reductions decrease the objective even more, we obtain
\begin{align*}
F(w^{n_\ell+1}) + \alpha \TV(w^{n_\ell+1})
- F(w^{n_\ell}) - \alpha \TV(w^{n_\ell})
&\ge {\ared}^{n_\ell,\bar{k},\bar{D}}  \ge \sigma \varepsilon > 0.
\end{align*}
Note that the while loop stops at the first instance we encounter an increase after the guaranteed
first decrease or the set $\calA$ becomes empty eventually so that we always get finite termination of
this loop. Because the sequence ${J(w^n)}_n$ decreases monotonically, we obtain the contradiction
$J(w^{n}) \to -\infty$.
\end{proof}

\section{Numerical Experiments}\label{sec:numerics}
This section performs numerical tests to compare the block-SLIP algorithm \Cref{alg:slipsub_greedy}
results to its primary competitor, SLIP \cite{leyffer2022sequential,manns2023on}. Note that block-SLIP is equivalent
to SLIP if the number of patches is equal to one.

The goal of our numerical study is to assess if and when the block-SLIP algorithm can outperform
SLIP and help to scale the problem sizes. To this end, we consider two test cases. First,
we consider an integer control problem that is defined on a one-dimensional domain, where we can
use the efficient subproblem solver from \cite{severitt2023efficient}. Second, we consider an integer
control problem on a two-dimensional domain, where no efficient subproblem solver is known and we need to
resort to an off-the-shelf integer programming solver.

Throughout this section, results of block-SLIP instances are designated by the subscript $\emph{bs}$ and
results of SLIP instances are designated by the subscript $\emph{s}$.
We have carried out the experiments on a node of the Linux HPC cluster LiDO3 with two
AMD EPYC 7542 32-Core CPUs and 64 GB RAM (computations were restricted to one CPU).

\subsection{1D Test Case}
We solve the integer optimal control benchmark problem that is defined in (5.1) in \cite{leyffer2022sequential} on 
the one-dimensional domain $\Omega = (-1,1)$ with the discretization reported therein. Specifically, the problem
reads
\begin{gather}\label{eq:example_1d}
\min_{w}\, \frac{1}{2}\|K w - f\|_{L^2(\Omega)}^2 + \alpha \TV(w)\enskip\textup{s.t.}\enskip w(t) \in W\text{ for a.e.\ } t \in \Omega,
\end{gather}
where we make the choices $W = \{-1,0,1\}$ and $f(t) = 0.2 \cos(2 \pi t - 0.25) \exp(t)$ for $t \in (-1,1)$ and have
$Kw = k * w$ with $(k*w)(t) = \int_{-1}^t k(t - \tau)w(\tau)\dd \tau$ with $k$ as in \S5 in \cite{leyffer2022sequential}.
It is a deconvolution problem that stems from \emph{Filtered Approximation} in electronics. It was analyzed as a
finite-dimensional \emph{convex quadratic integer program} in \cite{buchheim2012effective} and different variants have
been used as benchmark problems for integer optimal control algorithms in
\cite{kirches2021compactness,leyffer2022sequential,marko2023integer,severitt2023efficient}.

\paragraph{Subproblem Solution}
We use the topological sorting-based algorithm from \cite{severitt2023efficient} as subproblem solver since
(our latest implementation\footnote{\url{https://github.com/paulmanns/trs4slip}}) outperforms the $A^*$-based and
integer programming-based solution approaches that are also discussed in \cite{severitt2023efficient} on our
benchmark problems significantly.

\paragraph{Benchmark}
For our experiments, we make the choices of $N \in \{2^{12}, 2^{14}\}$ for the number of discretization intervals for the
piecewise constant ansatz for the control input function and $\alpha \in \{1.25\cdot 10^{-4}, 5.0\cdot 10^{-4}, 2.0 \cdot 10^{-3} \}$.
We cover a spectrum of values for $\alpha$ because the the numbers of iterations that SLIP and block-SLIP require can differ
substantially for different values $\alpha$. We highlight that, in contrast to an integer programming solver as is used for the second 
test case, the performance of the topological sorting-based subproblem solver only depends on the current value of the trust-region 
radius and not on the data (current iterate), see also \cite{severitt2023efficient}. As initial value for the optimization, we choose 
the control $w^0 \equiv 0$ for all instances.

For all of these instances, we run SLIP, block-SLIP with $N_p = 4$ patches, and block-SLIP with $N_p = 9$ patches.
The patches are uniform intervals, uniformly distributed over the domain, and always overlap by $0.2$ with their left and
right neighbors.

\paragraph{Algorithm Setup}
Regarding the algorithm, we choose $\Delta_0 = 0.125$ and $\sigma = 10^{-4}$. We can determine contraction of
the trust-region when $\Delta$ falls below the volume of one grid cell so that, due its discreteness, the subproblem
solution coincides with the previous iterate and no progress is possible. Then we terminate the algorithm.
We also prescribe a limit of $1000$ outer iterations but note that all SLIP and block-SLIP runs on our instances
terminate due to a contraction of the trust-region radius/no further progress being possible.

\paragraph{Results}
SLIP and block-SLIP have generally returned points with similar objective values. Specifically, the objective values
for all instances with $\alpha \in \{5.0 \cdot 10^{-4}, 2.0\cdot 10^{-3}\}$ are very close; they differ by less than $1\,\%$.
Only for the instance with $\alpha = 1.25 \cdot 10^{-4}$, block-SLIP returns a better objective value than SLIP,
which is approximately $8.7\,\%$ lower than the one produced by SLIP (consistently across discretizations and number of patches).
This behavior is not unexpected since SLIP and block-SLIP are local optimization techniques that compute different steps that may
converge to different (stationary) points. We highlight that due to the nonconvexity and the nontrivial
nature of the optimality condition we do not know how many stationary points the problem has and how this information could
straightforwardly be obtained.
Regarding run times, SLIP outperforms block-SLIP by a wide margin. This is due to the run time complexity of
the topological sorting-based subproblem solver, which is $O(\tilde{N} \tilde{K} M)$. Here, $\tilde{N}$ is
equal to $N$ for SLIP and is equal to the number of intervals of the patch problem for block-SLIP, which is
larger than $N / N_p$ because of the overlap of the patches; $\tilde{K} = \max\{\Delta N, \tilde{N}\}$,
which is equal to $\Delta N$ on all of our instances for both SLIP and block-SLIP. As a consequence, if
the same number of outer iterations is executed and the steps on the different patches become
acceptable for the same trust-region radius, the run time spent in the subproblem solver for block-SLIP must be higher
than for SLIP. We observe that block-SLIP generally requires more outer iterations on our benchmark, increasing
the run time spent in the subproblem solver even more. Moreover, many additional evaluations of the control-to-state
operator are necessary in block-SLIP, increasing the total run time of block-SLIP further.
A detailed tabulation of the objective values and run times is given in \cref{tab:exp2}.

In conclusion, our results for the first test case show that block-SLIP does not pay off for one-dimensional problems
since the subproblem solver is extremely efficient and scales well irrespective of the specific data.
The only reason that we can sensibly think of using block-SLIP in 1D are extreme cases, where very high values
of $N$ are required because the subproblem solver runs out of memory at some point.
For our test case and compute environment with 64\,GB RAM, this happens at $N = 2^{16}$.
 
\begin{table}[ht]
	\centering
	\caption{Objective values $J(x)$ and broken down to $f(x)$ and $\TV(x)$ and run times $t_x$ for $x = x_s$ (solution returned by SLIP)
	and $x = x_{bs}$ (solution returned by block-SLIP) for the different instances of our one-dimensional benchmark problem.
	In each row, the winner(s) in terms of objective and run time up to the reported precision are
	highlighted with bold-faced text.}\label{tab:exp1}	
	\begin{adjustbox}{width=\textwidth}	    
	\begin{tabular}{lllllllllll}
	\toprule
     $N$ & $N_{\rm p}$ & $\alpha\cdot 10^{-3}$ & $J(x_{\rm bs})$ & $J(x_{\rm s})$ & $f(x_{\rm bs})$ & $\TV(x_{\rm bs})$ & $f(x_{\rm s})$ & $\TV(x_{\rm s})$ & $t_{\rm bs}$ & $t_{\rm s}$ \\
	\midrule
	\multirow[t]{8}{*}{12} & \multirow[t]{4}{*}{4} 
	   & 0.125 & \textbf{0.002757} & 0.003017 & 0.001257 & 12 & 0.001517 & 12 & 41 & \textbf{21} \\
	&  & 0.500 & \textbf{0.006074} & \textbf{0.006074} & 0.002074 & 8 & 0.002074 & 8 & 27 & \textbf{13} \\
	&  & 2.000 & 0.015788 & \textbf{0.015787} & 0.003788 & 6 & 0.003787 & 6 & 13 & \textbf{6} \\
	\cline{2-11}
	& \multirow[t]{4}{*}{9} 
	   & 0.125 & \textbf{0.002759} & 0.003017 & 0.001259 & 12 & 0.001517 & 12 & 135 & \textbf{21} \\
	&  & 0.500 & \textbf{0.006074} & \textbf{0.006074} & 0.002074 & 8 & 0.002074 & 8 & 54 & \textbf{13} \\
	&  & 2.000 & \textbf{0.015787} & \textbf{0.015787} & 0.003787 & 6 & 0.003787 & 6 & 22 & \textbf{6} \\
	\cline{1-11} \cline{2-11}
	\multirow[t]{8}{*}{14} & \multirow[t]{4}{*}{4} 
	   & 0.125 & \textbf{0.002743} & 0.003010 & 0.001243 & 12 & 0.001510 & 12 & 1579 & \textbf{615} \\
	&  & 0.500 & \textbf{0.006072} & \textbf{0.006072} & 0.002072 & 8 & 0.002072 & 8 & 636 & \textbf{267} \\
	&  & 2.000 & \textbf{0.015786} & \textbf{0.015786} & 0.003786 & 6 & 0.003786 & 6 & 461 & \textbf{119} \\
	\cline{2-11}
	& \multirow[t]{4}{*}{9} 
	   & 0.125 & \textbf{0.002744} & 0.003010 & 0.001244 & 12 & 0.001510 & 12 & 4671 & \textbf{615} \\
	&  & 0.500 & \textbf{0.006072} & \textbf{0.006072} & 0.002072 & 8 & 0.002072 & 8 & 1659 & \textbf{267} \\
	&  & 2.000 & \textbf{0.015786} & \textbf{0.015786} & 0.003786 & 6 & 0.003786 & 6 & 432 & \textbf{119} \\
	\cline{1-11} \cline{2-11}
	\bottomrule
	\end{tabular}
	\end{adjustbox}
\end{table}

\subsection{2D Test Case}\label{sec:2Dtestcase}
As our 2D test case, we choose $W = \{0,1\}$ and a convection-diffusion equation that is specified as follows.
We consider the square domain $\Omega = (0,1)^2$ and for a given control $w$ with $w(x) \in  W$ a.e.,
the state vector $u$ is given by the solution to
\begin{gather}\label{eq:pde2d}
\begin{aligned}
-\varepsilon \Delta u + c_1 \cdot \nabla u + c_2 u w &= f \quad\text{in } \Omega \\
u&= 0 \quad\text{on } \{0,1\} \times (0,1) \cup ((0,0.25) \cup (0.75,1)) \times \{0\} \\
u&= \sin(2 \pi (x_1 - 0.25)) \quad\text{on } (0.25,0.75) \times \{0\} \\
\partial_n u &= 0 \quad\text{on } (0,1) \times \{1\},
\end{aligned}
\end{gather}
where $c_2 = 2$, $c_1(x) = (\begin{matrix} \sin(\pi x_1)
& \cos(2 \pi x_2)\end{matrix})^T$ for $x \in \Omega$, 
$f(x) = \sin(2 \pi x_1 + 2 \pi x_2) + 3$ for $x \in \Omega$, 
and $\varepsilon = 4\cdot10^{-2}$.

Let the solution operator to \eqref{eq:pde2d} be denoted by $S$. We choose the objective
\[ F = j\circ S \enskip\text{ with }\enskip j(u) = \frac{1}{2}\|u - u_d\|_{L^2}^2, \]
where $u_d$ is computed by solving a variant of \eqref{eq:pde2d}, where $c_1$ 
is replaced by $\tilde{c_1}(x) = (\begin{matrix} -x_2
& 2 x_1\end{matrix})^T$, for $w = 2.5 \chi_{A} - 4(x_1 - 0.35)^3 \chi_A - 6(x_2 - 0.35)^3\chi_B$
with $A = (0,0.35)^2$ and $B = \Omega\setminus (0,0.35)^2$.
Note that we use a different PDE for computing $u_d$ to ensure that $u_d$ cannot be reached
or almost be reached and thus avoid that $F(u)$ can have values very close to zero, which are more difficult
to compare (with relative error computations which are more influenced by numerical errors when the denominator
becomes close to zero).

\paragraph{Discretization and Subproblem Solution}
In order to discretize the control and the PDE and to assemble the finite-element matrices for the PDE,
we use the finite-element package \texttt{FEniCSx} \cite{Dolfinx}, in which we
choose a piecewise constant control ansatz on a uniform $N \times N$ of square grid cells and solve the PDE
on the same grid with each grid of the cells being decomposed into 4 triangles, where we use continuous
Lagrange elements of order one for $u$ and $u_d$.

In order to compute $\nabla F(w)$ on the computer, we fix the discretization and then determine the adjoint
using operator calculus and the finite-element system described above so that we follow a
\emph{first-discretize, then-optimize} principle. 
We evaluate the total variation directly on the control functions as described in \cite{manns2023on}, which
may introduce an anisotropic effect on the resulting $w$, see the considerations in \cite{schiemann2024discretization}.
Since the convergence properties of the $\TV$-discretization and accuracy of the geometry of the resulting functions
are not the goal of our experiments, we find this reasonable to avoid much longer compute times that would
otherwise be necessary when using the convergent discretization scheme from \cite{schiemann2024discretization}.
Since this discretization is applied to all instances over all algorithms, we still achieve a fair comparison.
The resulting integer linear programs for the discretized trust-region subproblems are solved by means
of \texttt{Gurobi} \cite{Gurobi}; see \cite{manns2024discrete} for a detailed
MIP formulation and the accompanying
repository\footnote{\url{https://github.com/INFORMSJoC/2024.0680}}
for a possible implementation using \texttt{Gurobi}'s python API.

\paragraph{Benchmark}
For our experiments, we make the choices $N \in \{64, 96\}$ and
$\alpha \in \{5\cdot 10^{-4},  7.5\cdot 10^{-4},        10^{-3}, 1.25\cdot 10^{-3},
            1.5\cdot 10^{-3}, 1.75\cdot 10^{-3}, 2\cdot 10^{-3}, 2.25\cdot 10^{-3}\}$.
We cover this spectrum of values for $\alpha$ because small and large values of $\alpha$ generally
lead to inexpensive instances for integer programming solvers with a lot of chattering behavior in the resulting
functions for small $\alpha$ and basically constant functions
for large values of $\alpha$. Consequently, we cover several values in between, where the run times are relatively high.
Moreover, we solve one instance for $N = 128$ with $\alpha = 10^{-3}$. We restrict to one value of $\alpha$ here
because the run time was very high. As initial value for the optimization, we choose the control $w^0 \equiv 0$
for all instances.

For all of these instances, we run SLIP, block-SLIP with $N_p = 4$ patches, and block-SLIP with $N_p = 9$
patches. The patches are of uniform size, uniformly distributed over the domain,
and overlap by $0.1$ in each axis with the neighboring patches. This is visualized in \cref{fig:patch_overlap}.

\paragraph{Algorithm Setup}
Regarding the algorithm, we choose $\Delta_0 = 0.125$ and $\sigma = 10^{-4}$. We can determine contraction of
the trust-region when $\Delta$ falls below the volume of one grid cell so that, due its discreteness, the subproblem
solution coincides with the previous iterate and no progress is possible. Then we terminate the algorithm.
We prescribe a limit of $100$ outer iterations, which has never been reached in our experiments.

\paragraph{Results}
SLIP and block-SLIP have returned very similar objective values for all conducted experiments. The objective values equal to four digits of accuracy in almost all cases. In the three remaining cases, the point returned by SLIP is slightly better (relative improvement of objective value $< 1\%$). \Cref{fig:ctrl_visualization} gives a visual impression for the computed solutions for $\alpha = 10^{-3}$. There are essentially two different points the algorithm variants produced.

For $N = 64$, the run times of SLIP and block-SLIP are similar but generally low (the most expensive instance
for SLIP has a run time slightly less than 6 minutes). For $N = 96$, the comparison between SLIP
and block-SLIP is different for the numbers of patches $N_p = 4$ and $N_p = 9$. For $N_p = 4$,
there is no clear winner. 
While a run of block-SLIP $N_p = 4$ gives the highest absolute speedup with
more than $16750\,s$ run time improvement from $66740\,s$ to $49990\,s$, there is also an instance,
where block-SLIP takes $10807\,s$, which is almost twice than SLIP does on this instance with
$5628\,s$. The reason is that for $N = 96$ and $N_p = 4$, the integer programs
resulting from the discretized trust-region subproblems are already quite large on the different patches
and therefore can have relatively long compute times. Moreover, more subproblems are solved in
total due to the domain decomposition approach. 
This is different for the higher number of patches $N_p = 9$. Except for one inexpensive instance
for the smallest value of $\alpha$, the run time of block-SLIP is substantially lower compared to
SLIP because the run time of the subproblems drops substantially now.
For the two most expensive instances of SLIP with run times of $9151\,s$ and $66740\,s$,
the achieved speedups are $25.28$ and $7.4$ and the run times for block-SLIP are $362\,s$ and $9017\,s$.
For the instances with $N = 128$, the effects observed for $N = 96$ get amplified. The decomposition into
$N_p = 4$ patches is counterproductive and the run time increases from already very expensive $134313\,s$ for SLIP to
$393184\,s$ with block-SLIP for $N_p = 4$. Many trust-region subproblems of block-SLIP for $N_p = 4$ are very expensive
in this case. In contrast to this, the run time of block-SLIP drops to $1053\,s$ for $N_p = 9$, which is a
speedup of $127.55$. A detailed tabulation of the objective values and compute times is given in \cref{tab:exp2}.

The computational effort to solve the trust-region subproblems highly depends on the patch in
our example. For both $N_{\rm p} = 4$ and $N_{\rm p} = 9$, the bottom left patch (see \cref{fig:patch_overlap})
induces a much higher computational effort for the integer programming solver than all other patches. This
computational effort is further concentrated to instances with comparatively large trust-region radii.
We provide mean and median runtimes of the trust-region subproblems for the case $\alpha = 10^{-3}$
in \cref{tab:exp2cum4} for $N_{\rm p} = 4$ and \cref{tab:exp2cum9} for $N_{\rm p} = 9$. This observation
is reflected in the properties of the linear programming relaxation of the trust-region subproblem.
Specifically, the solution to the linear programming relaxation is already integer-valued and thus optimal
for the other patches in almost all instances. We note that it can be expected but not guaranteed
that the linear programming relaxation is integer-valued in large parts of the domain since Theorem 3 in
\cite{manns2024discrete} guarantees that there is at most one connected component of grid cells, where it
can be fractional. In such cases, when the number of patches is relatively small and the runtimes vary
greatly between the patches, we firmly believe that parallelization is advisable on the subproblem solver
instead on the level of \cref{alg:slipsub_greedy}.

In conclusion, our results show that block-SLIP pays off very well for large problem sizes and a
sufficiently large number of patches so that the integer programs can be solved quickly.
When compute times are already low, block-SLIP generally does not have a beneficial effect.
\begin{figure*}[t!]
	\centering
	\begin{subfigure}[t]{0.49\textwidth}
		\centering
		\begin{tikzpicture}
		\draw[black,semithick] (0,0) -- (5,0) -- (5,5) -- (0,5) -- (0,0);
		\draw[draw=none,fill = white, pattern={Lines[angle=-45,distance={6pt}]}]
			(0,0) -- (3,0) -- (3,3) -- (0,3) -- (0,0);
		\draw[draw=none,fill = white, pattern={Lines[angle=0,distance={6pt}]}]
			(2,0) -- (2,3) -- (5,3) -- (5,0) -- (2,0);
		\draw[draw=none,fill = white, pattern={Lines[angle=45,distance={6pt}]}]
			(2,2) -- (2,5) -- (5,5) -- (5,2) -- (2,2);
		\draw[draw=none,fill = white, pattern={Lines[angle=90,distance={6pt}]}]
			(0,2) -- (3,2) -- (3,5) -- (0,5) -- (0,2);
		\draw[black,very thick,dotted] (0,2) -- (5,2);
		\draw[black,very thick,dotted] (0,3) -- (5,3);
		\draw[black,very thick,dotted] (2,0) -- (2,5);
		\draw[black,very thick,dotted] (3,0) -- (3,5);			
		\node[anchor=north] at (0, 0) {$0.$};
		\node[anchor=north] at (2, 0) {$.4$};
		\node[anchor=north] at (3, 0) {$.6$};
		\node[anchor=north] at (5, 0) {$1.$};
		\node[anchor=east] at (0, 0) {$0.$};
		\node[anchor=east] at (0, 2) {$.4$};
		\node[anchor=east] at (0, 3) {$.6$};
		\node[anchor=east] at (0, 5) {$1.$};		
		\end{tikzpicture}
		\caption{$N_p = 4$ overlapping patches.}
	\end{subfigure}%
	~ 
	\begin{subfigure}[t]{0.49\textwidth}
		\centering
		\begin{tikzpicture}
		\draw[black,semithick] (0,0) -- (5,0) -- (5,5) -- (0,5) -- (0,0);
		\draw[draw=none,fill = white, pattern={Lines[angle=-45,distance={5pt}]}]
			(0,0) -- (2,0) -- (2,2) -- (0,2) -- (0,0);
		\draw[draw=none,fill = white, pattern={Dots[angle=0,distance={4pt}]}]
			(1.5,0) -- (3.5,0) -- (3.5,2) -- (1.5,2) -- (1.5,0);
		\draw[draw=none,fill = white, pattern={Lines[angle=-45,distance={5pt}]}]
			(3,0) -- (5,0) -- (5,2) -- (3,2) -- (3,0);
		\draw[draw=none,fill = white, pattern={Lines[angle=45,distance={5pt}]}]
			(0,1.5) -- (2,1.5) -- (2,3.5) -- (0,3.5) -- (0,1.5);
		\draw[draw=none,fill = white, pattern={Lines[angle=0,distance={4pt}]}]
			(1.5,1.5) -- (3.5,1.5) -- (3.5,3.5) -- (1.5,3.5) -- (1.5,1.5);
		\draw[draw=none,fill = white, pattern={Lines[angle=45,distance={5pt}]}]
			(3,1.5) -- (5,1.5) -- (5,3.5) -- (3,3.5) -- (3,1.5);
		\draw[draw=none,fill = white, pattern={Lines[angle=-45,distance={5pt}]}]
			(0,3) -- (2,3) -- (2,5) -- (0,5) -- (0,0);
		\draw[draw=none,fill = white, pattern={Dots[angle=0,distance={4pt}]}]
			(1.5,3) -- (3.5,3) -- (3.5,5) -- (1.5,5) -- (1.5,3);
		\draw[draw=none,fill = white, pattern={Lines[angle=-45,distance={5pt}]}]
			(3,3) -- (5,3) -- (5,5) -- (3,5) -- (3,3);
		\draw[black,very thick,dotted] (0,1.5) -- (5,1.5);
		\draw[black,very thick,dotted] (0,2)   -- (5,2);
		\draw[black,very thick,dotted] (0,3)   -- (5,3);
		\draw[black,very thick,dotted] (0,3.5) -- (5,3.5);
		\draw[black,very thick,dotted] (1.5,0) -- (1.5,5);		
		\draw[black,very thick,dotted] (2,0)   -- (2,5);
		\draw[black,very thick,dotted] (3,0)   -- (3,5);
		\draw[black,very thick,dotted] (3.5,0) -- (3.5,5);
		\node[anchor=north] at (0,   0) {$0.$};
		\node[anchor=north] at (1.5, 0) {$.3$};		
		\node[anchor=north] at (2,   0) {$.4$};
		\node[anchor=north] at (3,   0) {$.6$};
		\node[anchor=north] at (3.5, 0) {$.7$};		
		\node[anchor=north] at (5,   0) {$1.$};
		\node[anchor=east] at (0, 0)    {$0.$};
		\node[anchor=east] at (0, 1.5)  {$.3$};		
		\node[anchor=east] at (0, 2)    {$.4$};
		\node[anchor=east] at (0, 3)    {$.6$};
		\node[anchor=east] at (0, 3.5)  {$.7$};				
		\node[anchor=east] at (0, 5)    {$1.$};		
		\end{tikzpicture}
		\caption{$N_p = 9$ overlapping patches.}
	\end{subfigure}
	\caption{Visualization of patch overlap for the domain $\Omega = (0,1)\times (0,1)$.}\label{fig:patch_overlap}
\end{figure*}

\begin{table}[ht]
    \centering
    \caption{Objective values $J(x)$ and broken down to $f(x)$ and $\TV(x)$ and run times $t_x$ for $x = x_s$ (solution returned by SLIP)
	and $x = x_{bs}$ (solution returned by block-SLIP) for the different instances of our two-dimensional benchmark problem.
	In each row, the winner(s) in terms of objective and run time up to the reported precision are
	highlighted with bold-faced text. The improvements in run time by block-SLIP with $N_p = 9$
	for the three most expensive instances (for SLIP) are additionally highlighted with color boxes.
	Run times are reported in seconds.}\label{tab:exp2}    
	\begin{adjustbox}{width=\textwidth}	    
    \begin{tabular}{lllllllllll}
        \toprule
         $N$ & $N_{\rm p}$ & $\alpha\cdot 10^{-3}$ & $J(x_{\rm bs})$ & $J(x_{\rm s})$ & $f(x_{\rm bs})$ & $\TV(x_{\rm bs})$ & $f(x_{\rm s})$ & $\TV(x_{\rm s})$ & $t_{\rm bs}$ & $t_{\rm s}$ \\
        \midrule
        \multirow[t]{8}{*}{64} & \multirow[t]{4}{*}{4} 
            & 0.50 & \textbf{0.6749} & \textbf{0.6749} & 0.6740 & 1.8593 & 0.6740 & 1.8593 & \textbf{42} & 55 \\
         &  & 0.75 & \textbf{0.6753} & \textbf{0.6753} & 0.6740 & 1.7500 & 0.6740 & 1.7500 & \textbf{51} & 107 \\
         &  & 1.00 & \textbf{0.6758} & \textbf{0.6758} & 0.6745 & 1.2812 & 0.6745 & 1.2812 & \textbf{214} & 353 \\
         &  & 1.25 & \textbf{0.6761} & \textbf{0.6761} & 0.6745 & 1.2656 & 0.6745 & 1.2656 & \textbf{222} & 256 \\
         &  & 1.50 & \textbf{0.6764} & \textbf{0.6764} & 0.6745 & 1.2656 & 0.6745 & 1.2656 & 172 & \textbf{164} \\
         &  & 1.75 & \textbf{0.6768} & \textbf{0.6768} & 0.6745 & 1.2656 & 0.6745 & 1.2656 & \textbf{108} & 149 \\
         &  & 2.00 & \textbf{0.6771} & \textbf{0.6771} & 0.6746 & 1.2500 & 0.6746 & 1.2500 & \textbf{117} & 123 \\
         &  & 2.25 & \textbf{0.6774} & \textbf{0.6774} & 0.6746 & 1.2343 & 0.6746 & 1.2343 & \textbf{115} & 149 \\
        \cline{2-11}
         & \multirow[t]{4}{*}{9} 
            & 0.50 & \textbf{0.6749} & \textbf{0.6749} & 0.6740 & 1.8593 & 0.6740 & 1.8593 & 114 & \textbf{55} \\
         &  & 0.75 & \textbf{0.6753} & \textbf{0.6753} & 0.6740 & 1.7500 & 0.6740 & 1.7500 & 115 & \textbf{107} \\
         &  & 1.00 & \textbf{0.6758} & \textbf{0.6758} & 0.6741 & 1.7187 & 0.6745 & 1.2812 & \textbf{76} & 353 \\
         &  & 1.25 & \textbf{0.6762} & \textbf{0.6761} & 0.6741 & 1.6718 & 0.6745 & 1.2656 & \textbf{95} & 256 \\
         &  & 1.50 & 0.6766 & \textbf{0.6764} & 0.6741 & 1.6718 & 0.6745 & 1.2656 & \textbf{97} & 164 \\
         &  & 1.75 & \textbf{0.6768} & \textbf{0.6768} & 0.6745 & 1.2656 & 0.6745 & 1.2656 & \textbf{138} & 149 \\
         &  & 2.00 & \textbf{0.6771} & \textbf{0.6771} & 0.6746 & 1.2500 & 0.6746 & 1.2500 & \textbf{119} & 123 \\
         &  & 2.25 & \textbf{0.6774} & \textbf{0.6774} & 0.6746 & 1.2343 & 0.6746 & 1.2343 & \textbf{148} & 149 \\
        \cline{1-11}
        \multirow[t]{8}{*}{96} & \multirow[t]{4}{*}{4} 
            & 0.50 & \textbf{0.6711} & \textbf{0.6711} & 0.6702 & 1.8645 & 0.6702 & 1.8854 & 264 & \textbf{234} \\
         &  & 0.75 & \textbf{0.6716} & \textbf{0.6716} & 0.6702 & 1.7604 & 0.6702 & 1.7604 & \textbf{503} & 980 \\
         &  & 1.00 & \textbf{0.6720} & \textbf{0.6720} & 0.6703 & 1.7395 & 0.6707 & 1.2708 & \textbf{612} & 9151 \\
         &  & 1.25 & \textbf{0.6723} & \textbf{0.6723} & 0.6707 & 1.2604 & 0.6707 & 1.2604 & \textbf{49990} & 66740 \\
         &  & 1.50 & \textbf{0.6726} & \textbf{0.6726} & 0.6707 & 1.2604 & 0.6707 & 1.2604 & 10807 & \textbf{5628} \\
         &  & 1.75 & \textbf{0.6729} & \textbf{0.6729} & 0.6707 & 1.2395 & 0.6707 & 1.2500 & 3894 & \textbf{3733} \\
         &  & 2.00 & \textbf{0.6732} & \textbf{0.6732} & 0.6707 & 1.2395 & 0.6707 & 1.2395 & 1942 & \textbf{1614} \\
         &  & 2.25 & \textbf{0.6735} & \textbf{0.6735} & 0.6707 & 1.2395 & 0.6707 & 1.2395 & \textbf{1050} & 1228 \\
        \cline{2-11}
         & \multirow[t]{4}{*}{9} 
            & 0.50 & \textbf{0.6711} & \textbf{0.6711} & 0.6702 & 1.8645 & 0.6702 & 1.8854 & 323 & \textbf{234} \\
         &  & 0.75 & \textbf{0.6716} & \textbf{0.6716} & 0.6702 & 1.7604 & 0.6702 & 1.7604 & \textbf{434} & 980 \\
         &  & 1.00 & \textbf{0.6720} & \textbf{0.6720} & 0.6703 & 1.7395 & 0.6707 & 1.2708 & \hl{\strut\textbf{362}} & \hl{\strut 9151} \\
         &  & 1.25 & \textbf{0.6723} & \textbf{0.6723} & 0.6707 & 1.2604 & 0.6707 & 1.2604 & \hl{\strut \textbf{9017}} & \hl{\strut 66740} \\
         &  & 1.50 & 0.6728 & \textbf{0.6726} & 0.6703 & 1.6875 & 0.6707 & 1.2604 & \textbf{489} & 5628 \\
         &  & 1.75 & 0.6733 & \textbf{0.6729} & 0.6704 & 1.6562 & 0.6707 & 1.2500 & \textbf{434} & 3733 \\
         &  & 2.00 & \textbf{0.6732} & \textbf{0.6732} & 0.6707 & 1.2395 & 0.6707 & 1.2395 & \textbf{820} & 1614 \\
         &  & 2.25 & \textbf{0.6735} & \textbf{0.6735} & 0.6707 & 1.2395 & 0.6707 & 1.2395 & \textbf{586} & 1228 \\
        \cline{1-11}
		\multirow[t]{2}{*}{128} & \multirow[t]{4}{*}{4} 
		    & 1.00 & \textbf{0.6738} & \textbf{0.6738} & 0.6725 & 1.2734 & 0.6725 & 1.2734 & 393184 & \textbf{134313} \\
        \cline{2-11}
		& \multirow[t]{4}{*}{9}
            & 1.00 & \textbf{0.6738} & \textbf{0.6738} & 0.6721 & 1.7344 & 0.6725 & 1.2734 & \hl{\strut \textbf{1053}} & \hl{\strut 134313} \\        
        \bottomrule
	\end{tabular}
	\end{adjustbox}
\end{table}

\begin{table}[ht]
	\centering
	\caption{Mean and median solution times (seconds) of Gurobi for \eqref{eq:trp} instances
		per patch for $N_{\rm p} = 4$ and $\alpha = 1.00 \cdot 10^{-3}$ over the course of
		\cref{alg:slipsub_greedy}.}\label{tab:exp2cum4}	
	\begin{adjustbox}{width=.55\textwidth}	    
		\begin{tabular}{llrrrr}
			\toprule
			$N$ & & 1 & 2 & 3 & 4 \\
			\midrule
			\multirow[t]{8}{*}{64} 
			& mean   & 4.50 & 0.07 & 0.12 & 0.09 \\
			& median & 3.82 & 0.06 & 0.08 & 0.10 \\
			\cline{1-6}
			\multirow[t]{8}{*}{96} 
			& mean   & 6.77 & 0.20 & 0.15 & 0.23 \\
			& median & 4.86 & 0.19 & 0.14 & 0.22 \\
			\cline{1-6}
		    \multirow[t]{8}{*}{128} 
			& mean  & 3477.81 &  0.40 &  0.30 &  0.31 \\
			& median  & 181.49 &  0.39 &  0.25 &  0.27 \\
			\bottomrule
		\end{tabular}
	\end{adjustbox}
\end{table}
\begin{table}[ht]
	\centering
	\caption{Mean and median solution times (seconds) of Gurobi for \eqref{eq:trp} instances
		per patch for $N_{\rm p} = 9$ and $\alpha = 1.00 \cdot 10^{-3}$ over the course of
		\cref{alg:slipsub_greedy}.}\label{tab:exp2cum9}	
	\begin{adjustbox}{width=\textwidth}	    
		\begin{tabular}{llrrrrrrrrr}
			\toprule
			$N$ & & 1 & 2 & 3 & 4 & 5 & 6 & 7 & 8 & 9 \\
			\midrule
			\multirow[t]{8}{*}{64} 
			& mean    & 0.39 &  0.05 &  0.04 &  0.06 &  0.03 &  0.04 &  0.10 &  0.04 &  0.05 \\
			& median  & 0.21 &  0.05 &  0.04 &  0.05 &  0.02 &  0.04 &  0.05 &  0.04 &  0.04 \\
			\cline{1-11}
			\multirow[t]{8}{*}{96}
            & mean    & 1.35 &  0.10 &  0.04 &  0.12 &  0.06 &  0.08 &  0.16 &  0.25 &  0.07 \\
            & median  & 0.36 &  0.09 &  0.02 &  0.08 &  0.06 &  0.08 &  0.07 &  0.07 &  0.07 \\
			\cline{1-11}
			\multirow[t]{8}{*}{128} 
			& mean    & 5.40 &  0.21 &  0.07 &  0.24 &  0.19 &  0.15 &  0.13 &  0.03 &  0.10 \\
			& median  & 2.11 &  0.19 &  0.05 &  0.27 &  0.19 &  0.16 &  0.01 &  0.02 &  0.10 \\
			\bottomrule
		\end{tabular}
	\end{adjustbox}
\end{table}

\begin{figure}
\centering
\begin{subfigure}[b]{0.31\textwidth}
	\centering
	\includegraphics[width=\textwidth]{./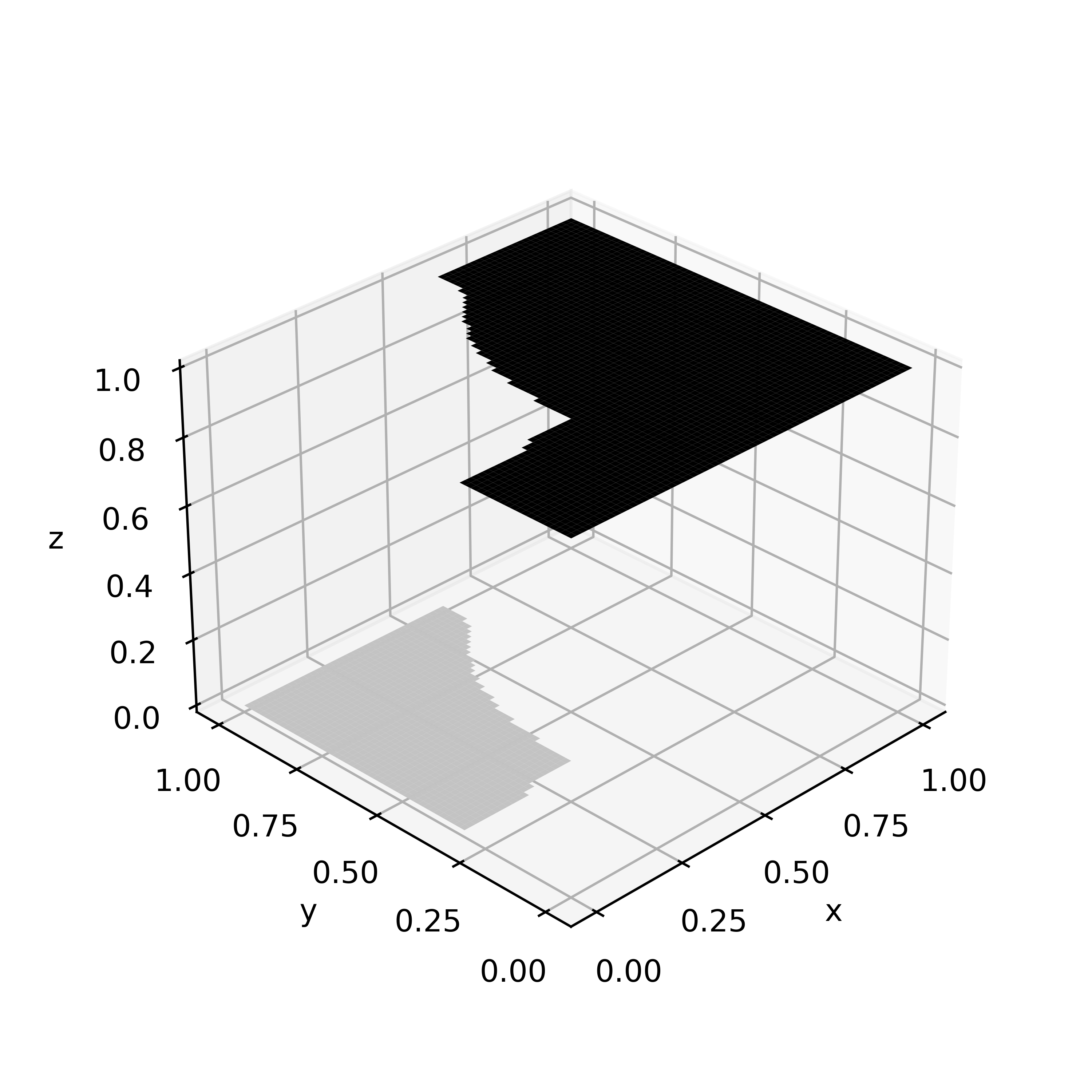}
	\caption{{\small $N = 64$, $N_{\rm p} = 1$}}
\end{subfigure}
\hfill
\begin{subfigure}[b]{0.31\textwidth}  
	\centering 
	\includegraphics[width=\textwidth]{./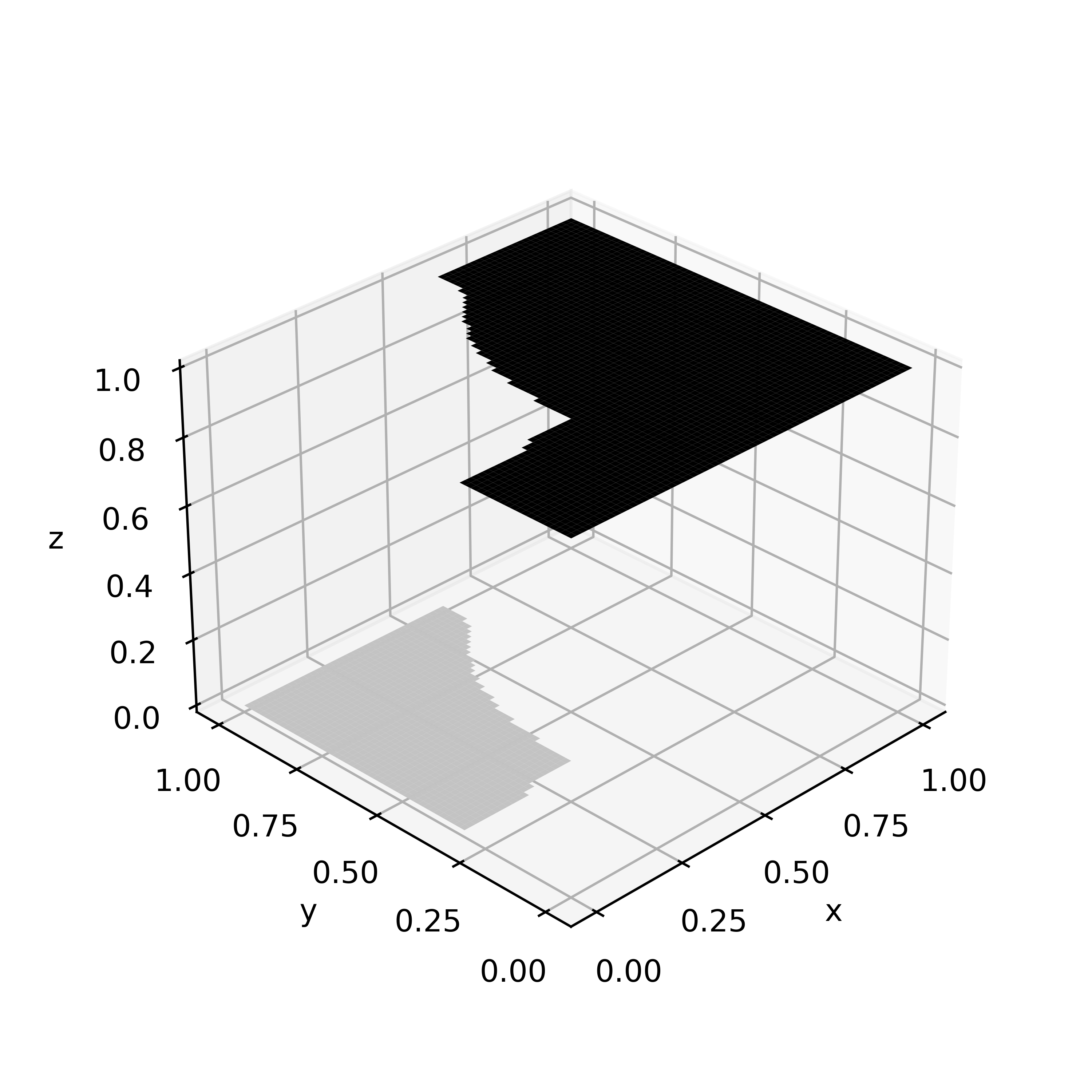}
	\caption{{\small  $N = 64$, $N_{\rm p} = 4$}}
\end{subfigure}
\hfill
\begin{subfigure}[b]{0.31\textwidth}  
	\centering 
\includegraphics[width=\textwidth]{./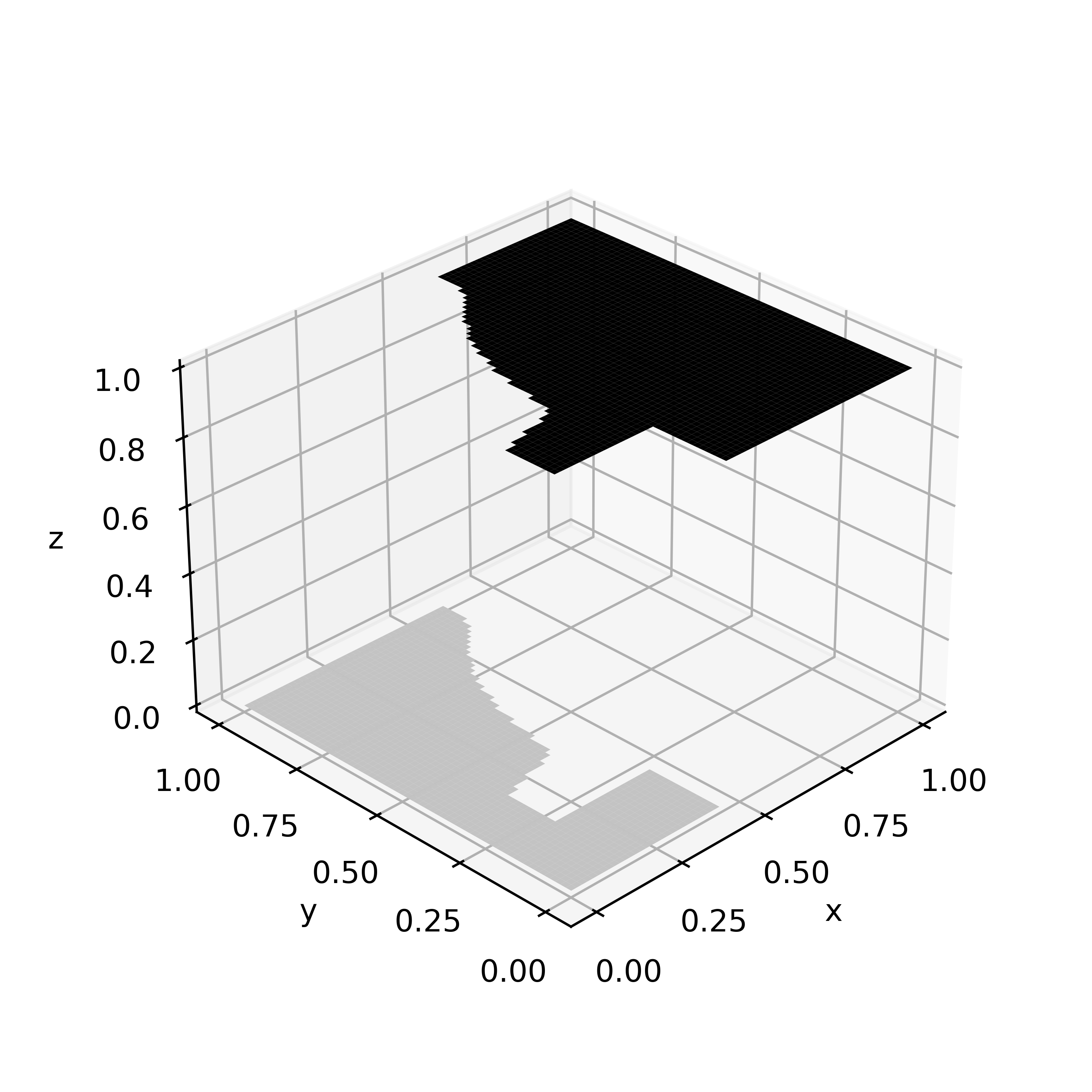}
\caption{{\small  $N = 64$, $N_{\rm p} = 9$}}
\end{subfigure}
\vskip\baselineskip
\begin{subfigure}[b]{0.31\textwidth}
	\centering
	\includegraphics[width=\textwidth]{./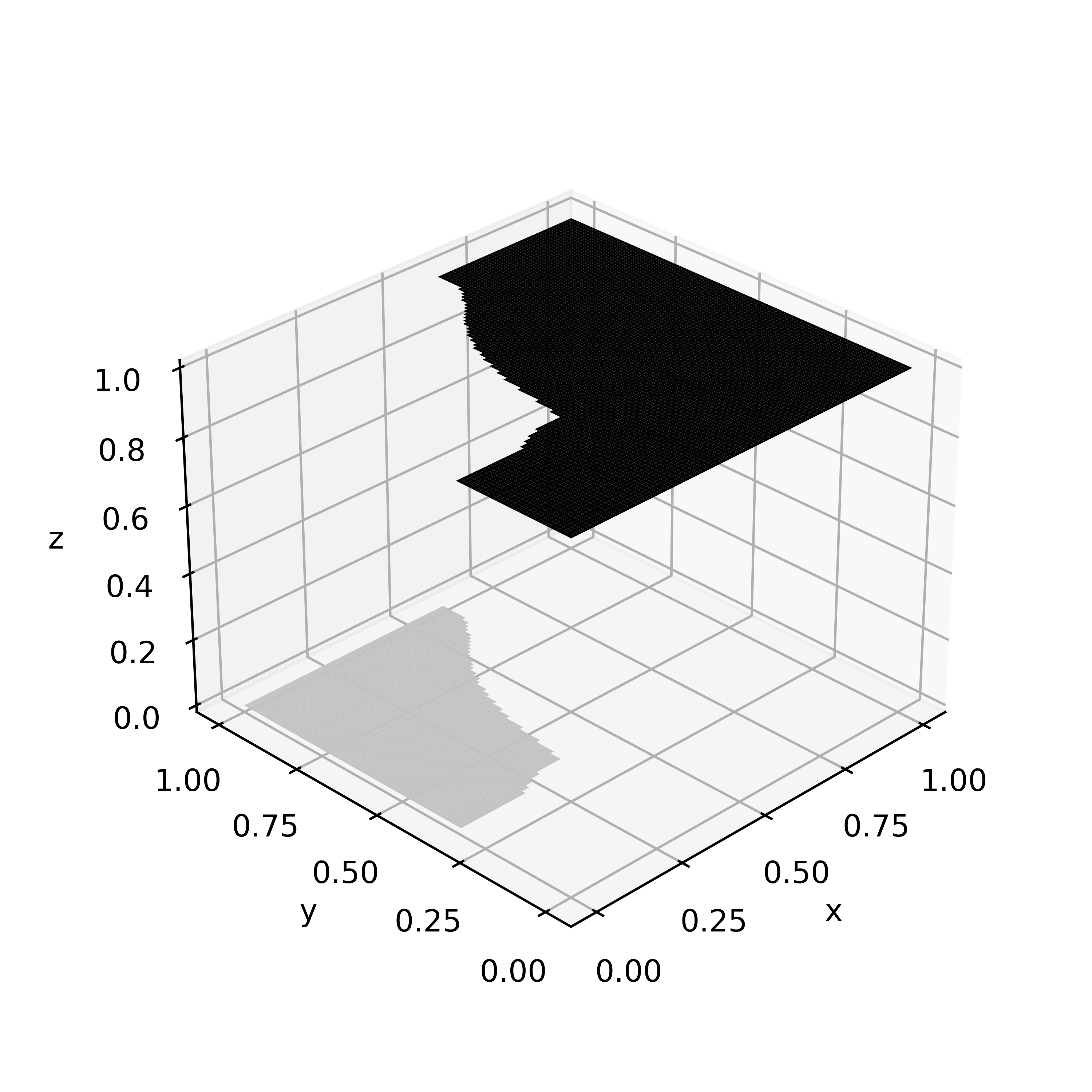}
	\caption{{\small $N = 96$, $N_{\rm p} = 1$}}
\end{subfigure}
\hfill
\begin{subfigure}[b]{0.31\textwidth}  
	\centering 
	\includegraphics[width=\textwidth]{./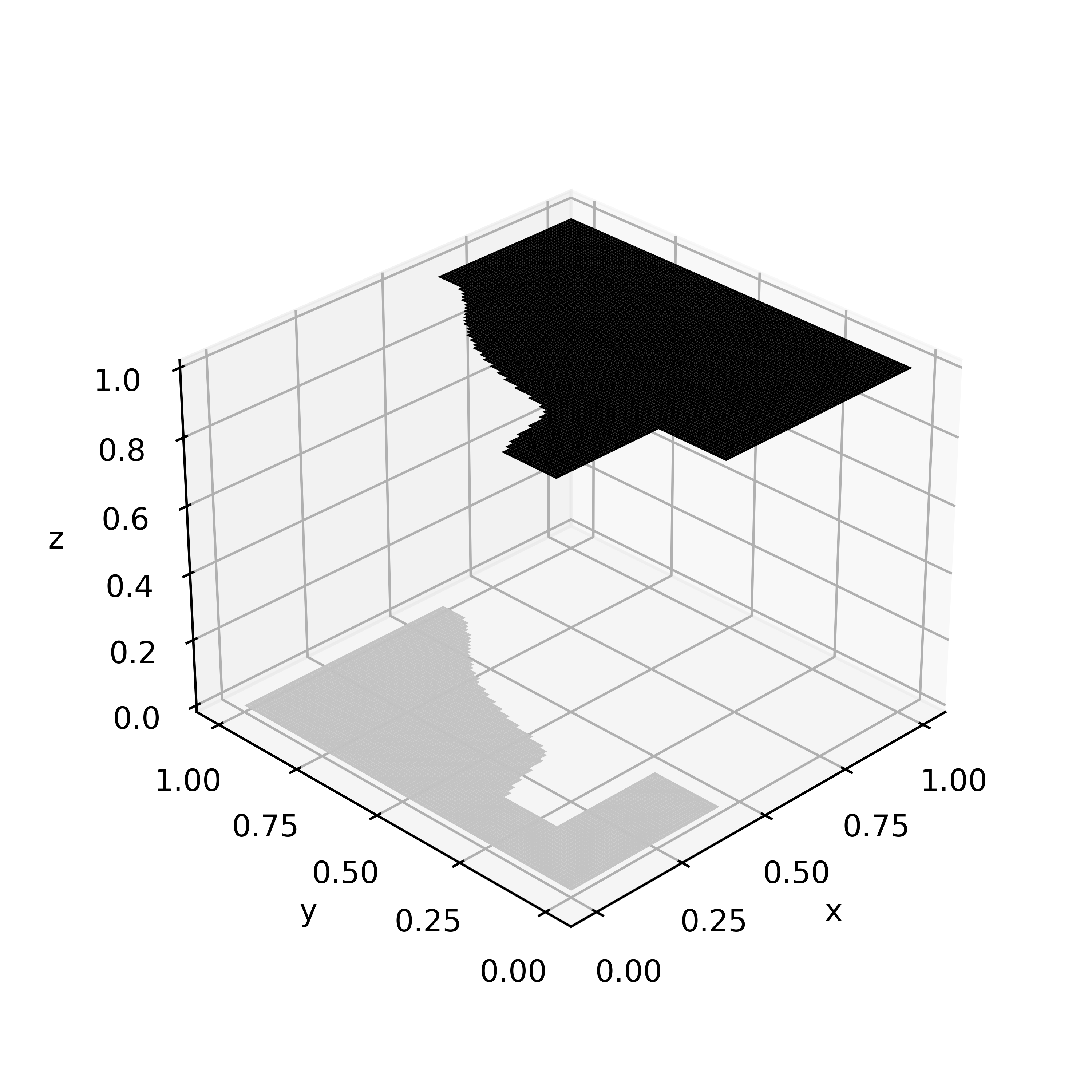}
	\caption{{\small  $N = 96$, $N_{\rm p} = 4$}}
\end{subfigure}
\hfill
\begin{subfigure}[b]{0.31\textwidth}  
	\centering 
	\includegraphics[width=\textwidth]{./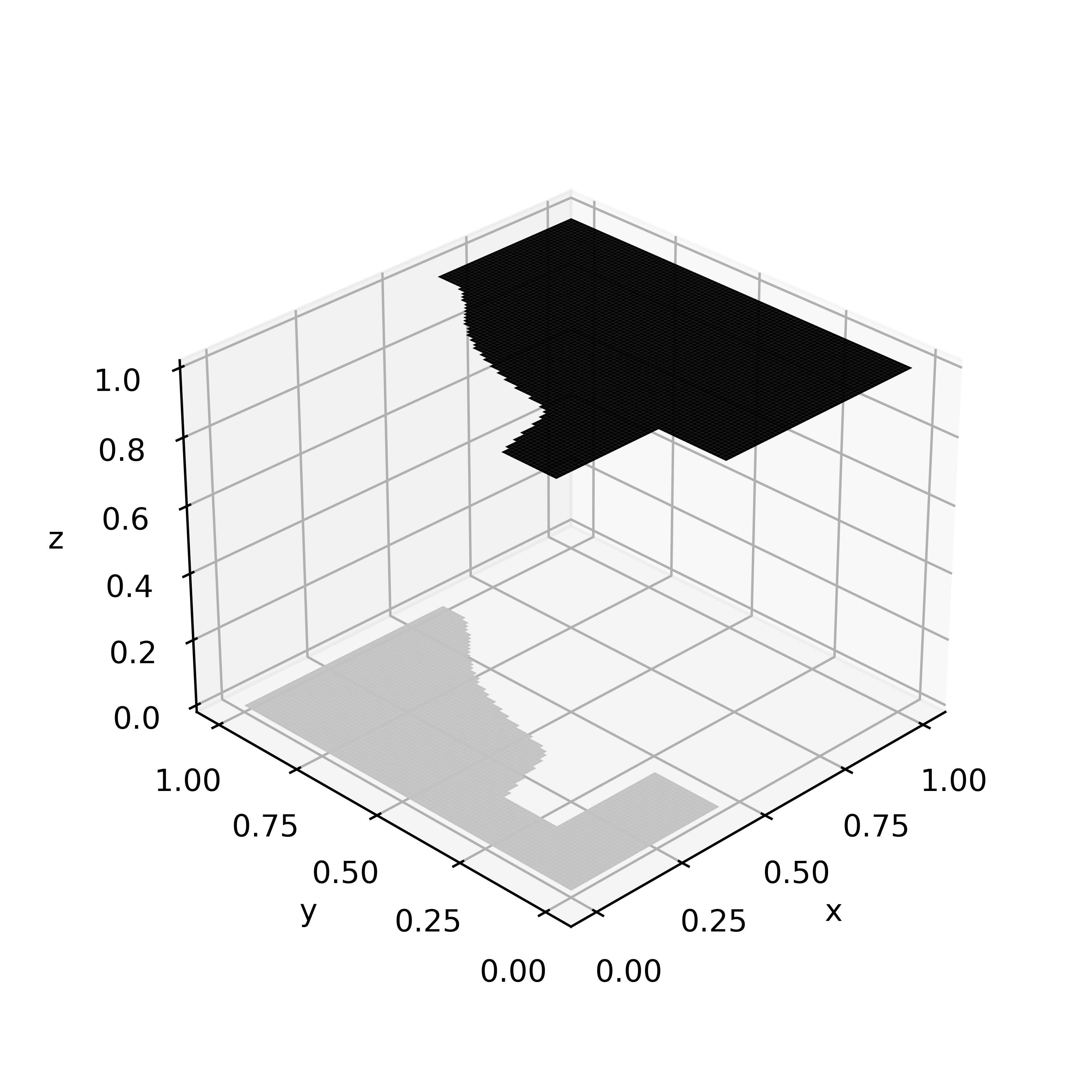}
	\caption{{\small  $N = 96$, $N_{\rm p} = 9$}}
\end{subfigure}
\vskip\baselineskip
\begin{subfigure}[b]{0.31\textwidth}
	\centering
	\includegraphics[width=\textwidth]{./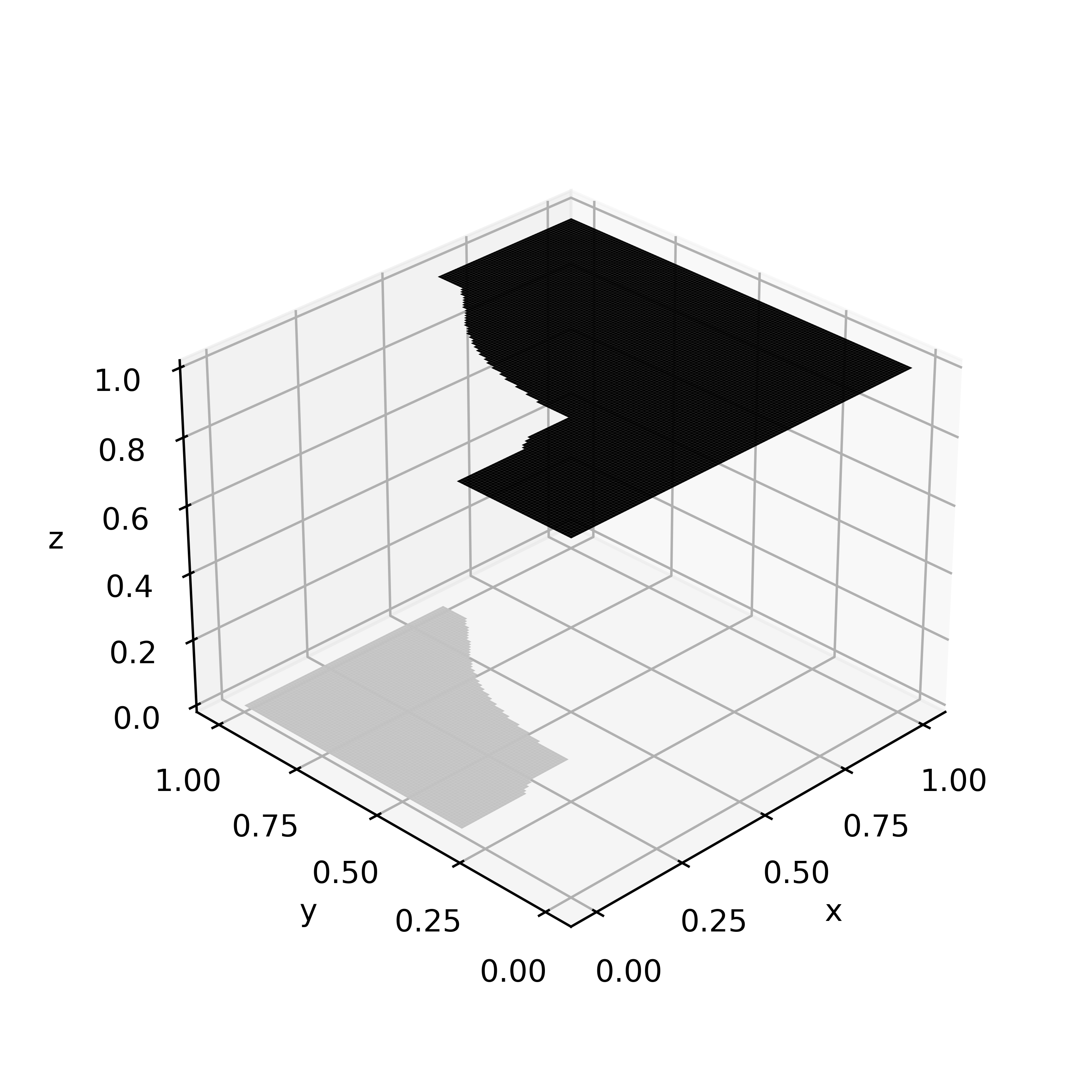}
	\caption{{\small $N = 128$, $N_{\rm p} = 1$}}
\end{subfigure}
\hfill
\begin{subfigure}[b]{0.31\textwidth}  
	\centering 
	\includegraphics[width=\textwidth]{./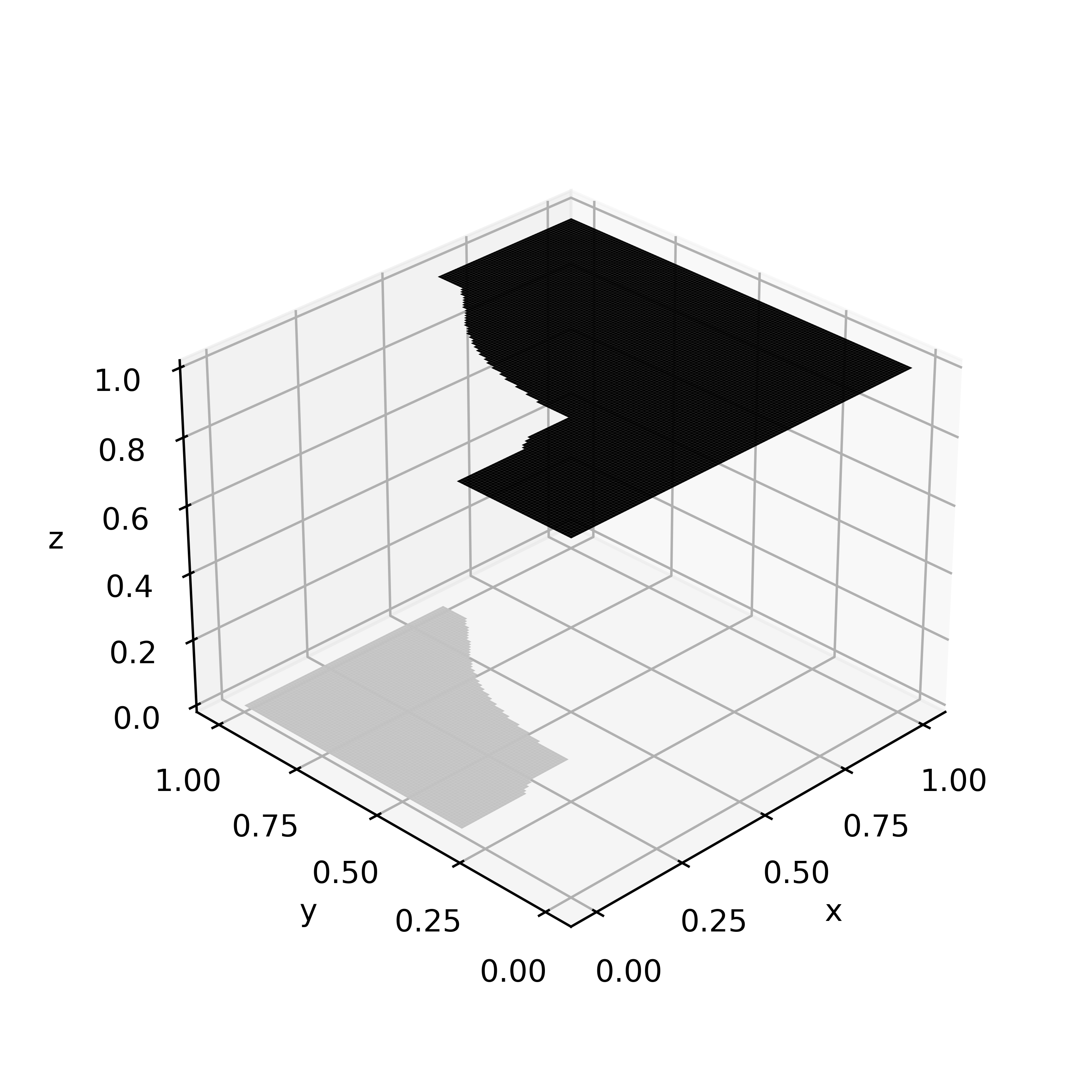}
	\caption{{\small  $N = 128$, $N_{\rm p} = 4$}}
\end{subfigure}
\hfill
\begin{subfigure}[b]{0.31\textwidth}  
	\centering 
	\includegraphics[width=\textwidth]{./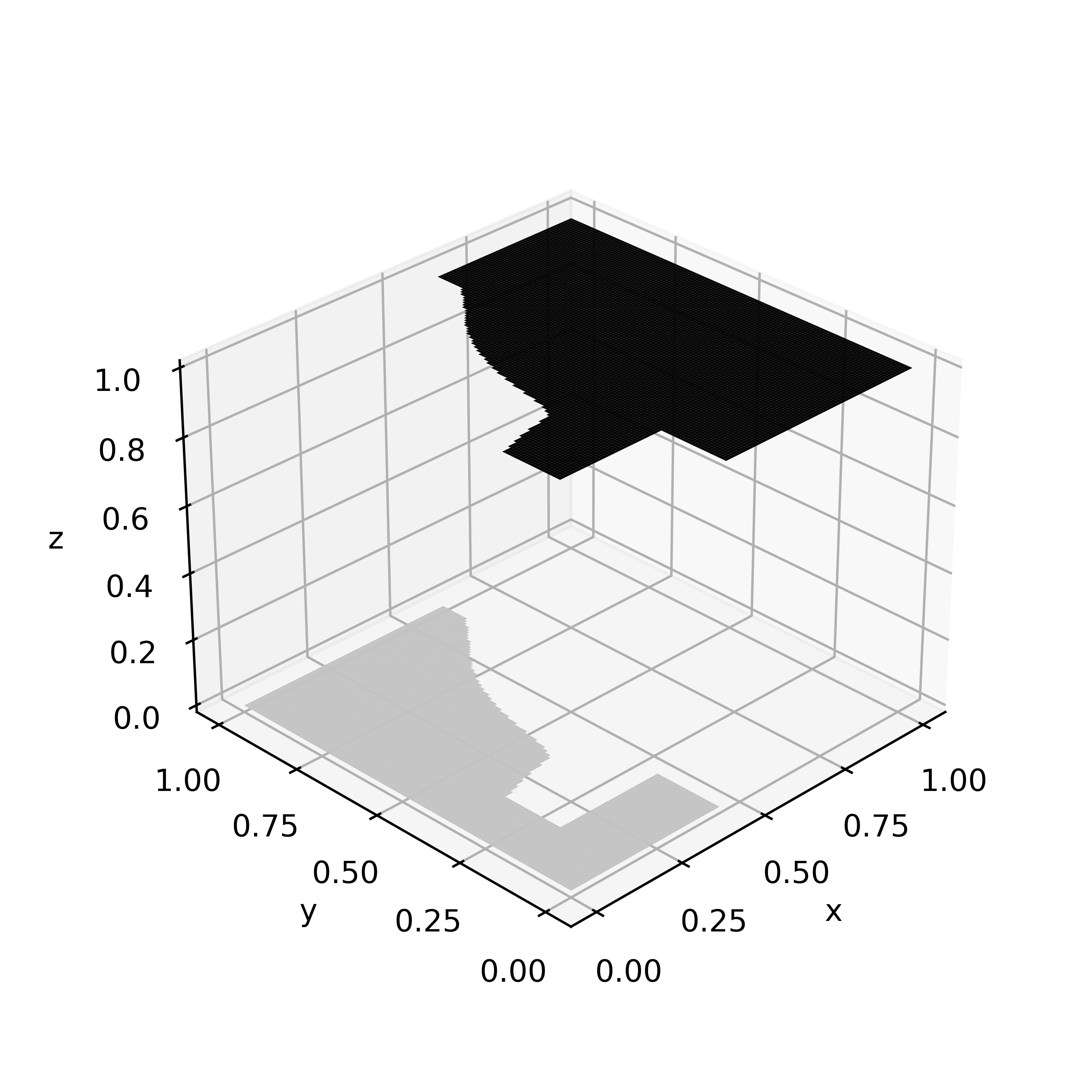}
	\caption{{\small  $N = 128$, $N_{\rm p} = 9$}}
\end{subfigure}

\caption{Visualization of the controls produced by \cref{alg:slipsub_greedy}
	for $\alpha = 10^{-3}$, $N \in \{64,96,128\}$, and $N_{\rm p} \in \{1,4,9\}$.}\label{fig:ctrl_visualization}
\end{figure}

\section*{Acknowledgments}
The authors are grateful to Francesco Maggi (UT Austin) for providing helpful guidance that lead to the competitor
constructions in the proofs of \cref{lem:instationarity,lem:weakstar_accumulation_points_are_strict}.
The authors gratefully acknowledge computing time on the
LiDO3 HPC cluster at TU Dortmund, partially funded in the Large-Scale Equipment
796 Initiative by the Deutsche Forschungsgemeinschaft (DFG) as project 271512359.
Sandia National Laboratories is a multimission laboratory
    managed and operated by National Technology and Engineering
    Solutions of Sandia, LLC., a wholly owned subsidiary of
    Honeywell International, Inc., for the U.S.\ Department of
    Energy’s National Nuclear Security Administration under
    contract DE-NA0003525.
    This paper describes objective technical results and analysis. Any
    subjective views or opinions that might be expressed in the paper
    do not necessarily represent the views of the U.S.\ Department of
    Energy or the United States Government.

\bibliographystyle{plain}
\bibliography{references}{}

\appendix

\section{Auxiliary Results}
\begin{lemma}\label{lem:symdif}
Let $A$, $B$, $C$ be sets of finite perimeter in $\Omega$,
$\Ha^{d-1}(C \cap \partial^*A) = 0$, and
$\Ha^{d-1}(C \cap \partial^*B) = 0$. Then
\[ \Ha^{d-1}\big(C \cap ((A^{(1)} \cap B^{(0)}) \cup (A^{(0)} \cap B^{(1)}))\big)
= \Ha^{d-1}\big(C \cap (A^{(1)} \symdif B^{(1)})\big).\]
\end{lemma}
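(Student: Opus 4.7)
The plan is to reduce the set-theoretic identity to a pointwise statement that holds modulo an $\Ha^{d-1}$-null subset of $C$, by invoking Federer's structure theorem for sets of finite perimeter. Recall that for a set of finite perimeter $E$, the essential boundary $\partial^e E$ and the reduced boundary $\partial^* E$ differ only by an $\Ha^{d-1}$-negligible set, and $\Omega = E^{(0)} \cup E^{(1)} \cup \partial^e E$ is a disjoint decomposition with $\Ha^{d-1}(\Omega \setminus (E^{(0)} \cup E^{(1)} \cup \partial^* E)) = 0$; see, e.g., Theorem 16.2 in \cite{maggi2012sets}.

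Applying this to $A$ and $B$ and combining with the two hypotheses $\Ha^{d-1}(C \cap \partial^*A) = 0$ and $\Ha^{d-1}(C \cap \partial^*B) = 0$, I obtain an $\Ha^{d-1}$-null set $N \subset C$ such that every $x \in C \setminus N$ lies in exactly one of $A^{(0)}, A^{(1)}$ and in exactly one of $B^{(0)}, B^{(1)}$. Consequently, $C \setminus N$ is partitioned by the four pairwise disjoint sets $A^{(0)} \cap B^{(0)}$, $A^{(0)} \cap B^{(1)}$, $A^{(1)} \cap B^{(0)}$, and $A^{(1)} \cap B^{(1)}$.

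On this partition the claimed equality is just a tautology: a point of $C \setminus N$ lies in $A^{(1)} \symdif B^{(1)}$ iff it belongs to exactly one of $A^{(1)}, B^{(1)}$, which by the density dichotomy is the case iff it lies in $(A^{(1)} \cap B^{(0)}) \cup (A^{(0)} \cap B^{(1)})$. Taking $\Ha^{d-1}$-measures and discarding $N$ gives the stated identity.

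The only delicate point is the interchangeability of $\partial^* E$ and $\partial^e E$ in $\Ha^{d-1}$-measure, which lets the hypothesis on the reduced boundary control the full essential boundary; once this is in place the remainder is a routine Boolean identity on the four density classes.
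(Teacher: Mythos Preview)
Your proof is correct and follows essentially the same approach as the paper's: both invoke Federer's structure theorem to obtain $\Ha^{d-1}(\Omega \setminus (E^{(0)} \cup E^{(1)} \cup \partial^* E)) = 0$ for $E \in \{A,B\}$, combine this with the hypotheses on $C$ to reduce modulo an $\Ha^{d-1}$-null set to the four density classes, and then observe the Boolean identity $(A^{(1)} \cap B^{(0)}) \cup (A^{(0)} \cap B^{(1)}) = A^{(1)} \symdif B^{(1)}$ on that set. Your explicit mention of the $\partial^e E$ versus $\partial^* E$ distinction is a nice clarification that the paper leaves implicit.
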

\begin{proof}
Because $A$ and $B$ are sets of finite perimeter, we have
\[ \Ha^{d-1} \left(\Omega \setminus \left(A^{(0)} \cup A^{(1)} \cup \partial^* A\right)\right) = 0
\text{ and }
  \Ha^{d-1} \left(\Omega \setminus \left(B^{(0)} \cup B^{(1)} \cup \partial^* B\right)\right) = 0.
\]
Consequently, we obtain
\[
\Ha^{d-1} \left(C \cap \left(\left( A^{(1)} \cap B^{(0)} \right)
\cup \left( A^{(0)} \cap B^{(1)}\right)\right)\right)
=
\Ha^{d-1} \left(C \cap \left(\left( A^{(1)} \setminus B^{(1)} \right) \cup
\left(B^{(1)} \setminus A^{(1)}\right)\right)\right).
\]
\end{proof}

\begin{lemma}\label{lem:eq_1635}
Let $A$, $B$, $C$, $F$ be sets of finite perimeter in $\Omega$ such that
\begin{align*}
\Ha^{d-1}(\partial^*A \cap \partial^*B) = 0 &, \\
\Ha^{d-1}(\partial^*A \cap \partial^*C) = 0 &, \text{and}\\
F = (C \cap A) \cup (B\setminus A)&.
\end{align*}
Then
\[ D\chi_{F}
   = D\chi_B\mres (\Omega \setminus \overline{A}) 
   + D\chi_C\mres A
   + D\chi_A\mres (C^{(1)} \cap B^{(0)})
   - D\chi_A\mres (C^{(0)} \cap B^{(1)}).
\]
\end{lemma}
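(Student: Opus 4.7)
The plan is to write $F$ as a disjoint union and apply the standard intersection/difference formulas for distributional derivatives of sets of finite perimeter (Theorem 16.3 and related results in \cite{maggi2012sets}), then use the null boundary intersection hypotheses to collapse the boundary terms and regroup. Specifically, since $A \cap C$ and $B \setminus A$ are disjoint, linearity yields $D\chi_F = D\chi_{A \cap C} + D\chi_{B\setminus A}$, and the two summands can be treated independently.

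For the first summand, the product formula $\chi_{A\cap C} = \chi_A \chi_C$ combined with the Leibniz-type rule for $\BV$ functions gives, under $\Ha^{d-1}(\partial^* A \cap \partial^* C) = 0$, the identity $D\chi_{A \cap C} = D\chi_C \mres A^{(1)} + D\chi_A \mres C^{(1)}$; the boundary term supported on $\partial^* A \cap \partial^* C$ vanishes by hypothesis. Writing $\chi_{B \setminus A} = \chi_B - \chi_B \chi_A$ and applying the same rule with $\Ha^{d-1}(\partial^* A \cap \partial^* B) = 0$ yields $D\chi_{B \setminus A} = D\chi_B \mres (\Omega \setminus A^{(1)}) - D\chi_A \mres B^{(1)}$, where the restriction to $\Omega \setminus A^{(1)}$ can be replaced by a restriction to $A^{(0)}$ since $|D\chi_B|$ is supported on $\partial^*B$ and $(\Omega \setminus A^{(1)}) \setminus A^{(0)} \subset \partial^e A$ is $\Ha^{d-1}$-equivalent to $\partial^*A$, which has null intersection with $\partial^*B$ by hypothesis.

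Adding the two summands produces
\[
D\chi_F = D\chi_B \mres A^{(0)} + D\chi_C \mres A^{(1)} + D\chi_A \mres C^{(1)} - D\chi_A \mres B^{(1)}.
\]
To reach the claimed form, I would split the $D\chi_A$ contributions using that $\partial^* A$ is contained in $B^{(0)} \cup B^{(1)}$ up to an $\Ha^{d-1}$-null set (via the hypothesis $\Ha^{d-1}(\partial^* A \cap \partial^* B) = 0$ and the De Giorgi--Federer decomposition $\Omega = B^{(0)} \cup B^{(1)} \cup \partial^* B$ up to $\Ha^{d-1}$-null), and similarly for $C$. This lets me rewrite $D\chi_A \mres C^{(1)} - D\chi_A \mres B^{(1)}$ as the sum over the four cells $B^{(a)} \cap C^{(b)}$, where the $(B^{(1)} \cap C^{(1)})$ parts cancel and the remaining two terms give exactly $D\chi_A \mres (C^{(1)} \cap B^{(0)}) - D\chi_A \mres (C^{(0)} \cap B^{(1)})$. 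The identifications $D\chi_C \mres A^{(1)} = D\chi_C \mres A$ and $D\chi_B \mres A^{(0)} = D\chi_B \mres (\Omega \setminus \overline{A})$ follow similarly from the null boundary intersection assumptions, using that $A \symdif A^{(1)}$ and $A^{(0)} \symdif (\Omega \setminus \overline{A})$ are contained in sets $\Ha^{d-1}$-equivalent to $\partial^* A$ (so of vanishing $|D\chi_C|$- and $|D\chi_B|$-mass respectively).

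The main obstacle is the last identification $D\chi_B \mres A^{(0)} = D\chi_B \mres (\Omega \setminus \overline{A})$: in general $A^{(0)} \cap \overline{A}$ need not be contained in $\partial^* A$, so a careful choice of representative of $A$ (typically $A = A^{(1)}$) and a geometric argument confining $\overline{A} \setminus A^{(1)}$ to the essential boundary, combined with the null intersection $\Ha^{d-1}(\partial^* A \cap \partial^* B) = 0$, is needed. In the algorithmic applications where this lemma is invoked the set $A = D_s$ is chosen so that $\partial D_s$ meets $\partial^* B$ in an $\Ha^{d-1}$-null set, which resolves this bookkeeping.
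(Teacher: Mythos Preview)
Your approach is correct but takes a genuinely different route from the paper. The paper's proof is a one-line citation: it invokes Theorem~16.16 in \cite{maggi2012sets}, specifically equation~(16.35) therein, which gives precisely the claimed identity for the Gauss--Green measures in $\R^d$, and then restricts both sides to $\Omega$. No decomposition or Leibniz rule is carried out explicitly.

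You instead reconstruct the identity from more primitive ingredients: linearity on the disjoint union $F = (A\cap C)\cup(B\setminus A)$, the intersection formula (Theorem~16.3 in \cite{maggi2012sets}) applied to each piece, and a regrouping of the $D\chi_A$ contributions over the cells $B^{(a)}\cap C^{(b)}$. This is more self-contained and makes transparent exactly where each hypothesis is consumed. The intermediate identity you derive,
\[
D\chi_F = D\chi_B \mres A^{(0)} + D\chi_C \mres A^{(1)} + D\chi_A \mres (C^{(1)}\cap B^{(0)}) - D\chi_A \mres (C^{(0)}\cap B^{(1)}),
\]
is in fact the measure-theoretically robust version of the statement.

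Your flagged obstacle is real and well-diagnosed: passing from $A^{(0)}$, $A^{(1)}$ to $\Omega\setminus\overline{A}$, $A$ is not automatic for an arbitrary finite-perimeter representative, since $A^{(0)}\cap\overline{A}$ need not lie in $\partial^*A$. In Maggi's Theorem~16.16 this is handled by taking $A$ open (and working with the topological boundary $\partial A$ in the null-intersection hypotheses), which is exactly the situation when the lemma is invoked with $A = D_s$ in \cref{lem:competitor}. So your caveat is the right one, and the paper's shortcut of citing (16.35) implicitly imports that openness assumption.
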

\begin{proof}
We first observe that sets of finite perimeter in $\Omega$ are also sets of finite perimeter in $\R^d$.
To distinguish the distributional derivative of a set of finite perimeter $G$ in $\R^d$ from the one
when interpreting it as a set of finite perimeter in $\Omega$, we denote the distributional derivative
of the former by $\mu_G$ as in \cite{maggi2012sets} and in contrast to $D\chi_G$ for the latter
as introduced in \cref{sec:notation}.

We apply Theorem 16.16 from \cite{maggi2012sets} and obtain from (16.35) in its proof
that
\[ \mu_F
= \mu_B \mres (\R^d \setminus \overline{A}) 
+ \mu_C \mres A
+ \mu_A \mres (C^{(1)} \cap B^{(0)})
- \mu_A \mres (C^{(0)} \cap B^{(1)}).
\]
holds when interpreting $D\chi_G$ as the distributional derivative of a set of finite perimeter
in $\R^d$. Restricting the measures on both sides to $\Omega$, we obtain
\[ \mu_F \mres \Omega
= \mu_B \mres (\Omega \setminus \overline{A}) 
+ \mu_C \mres A
+ \mu_A \mres (C^{(1)} \cap B^{(0)})
- \mu_A \mres (C^{(0)} \cap B^{(1)})
\]
because $A$, $C^{(1)} \cap B^{(0)}$, $C^{(0)} \cap B^{(1)} \subset \Omega$ already.
Since the distributional derivatives coincide in $\Omega$, we obtain $D\chi_F = \mu_F \mres \Omega$
and similar for the other terms so that the claim follows.
\end{proof}
\end{document}